\newtheorem{theorem}{Theorem}
\newtheorem{corollary}[theorem]{Corollary}
\newtheorem{lemma}[theorem]{Lemma}
\newtheorem{proposition}[theorem]{Proposition}
\newtheorem{remark}[theorem]{Remark}
\newtheorem{definition}[theorem]{Definition}
\numberwithin{theorem}{section}
\numberwithin{figure}{section}
\numberwithin{equation}{section}
\begin{document}
\newcommand{\eps}{\epsilon}
\newcommand{\ov}{\overline}
\newcommand{\U}{\mathbb{U}}
\newcommand{\T}{\mathbb{T}}
\newcommand{\HH}{\mathbb{H}}
\newcommand{\LA}{\mathcal{A}}
\newcommand{\LF}{\mathcal{F}}
\newcommand{\LK}{\mathcal{K}}
\newcommand{\LE}{\mathcal{E}}
\newcommand{\LL}{\mathcal{L}}
\newcommand{\LU}{\mathcal{U}}
\newcommand{\LV}{\mathcal{V}}
\newcommand{\R}{\mathbb{R}}
\newcommand{\C}{\mathbb{C}}
\newcommand{\N}{\mathbb{N}}
\newcommand{\Z}{\mathbb{Z}}
\newcommand{\E}{\mathbb{E}}
\newcommand{\PP}{\mathbb{P}}
\newcommand{\QQ}{\mathbb{Q}}
\newcommand{\A}{\mathbb{A}}
\newcommand{\bn}{\mathbf{n}}
\newcommand{\cond}{\,|\,}

\begin{frontmatter}

\title{Conformal Restriction and Brownian Motion}
\runtitle{Conformal Restriction and Brownian Motion}

\begin{aug}
  \author{Hao Wu\ead[label=e1]{hao.wu.proba@gmail.com}}
 
  \address{NCCR/SwissMAP, Section de Math\'ematiques, Universit\'e de Gen\`eve, Switzerland\\ 
           \printead{e1}}

  \affiliation{NCCR/SwissMAP, Section de Math\'ematiques, Universit\'e de G\`eneve, Switzerland}

\end{aug}

\begin{abstract}
This survey paper is based on the lecture notes for the mini course in the summer school at Yau Mathematics Science Center, Tsinghua University, 2014.

We describe and characterize all random subsets $K$ of simply connected domain which satisfy the ``conformal restriction" property. There are two different types of random sets: the chordal case and the radial case. In the chordal case, the random set $K$ in the upper half-plane $\HH$ connects two fixed boundary points, say 0 and $\infty$, and given that $K$ stays in a simply connected open subset $H$ of $\HH$, the conditional law of $\Phi(K)$ is identical to that of $K$, where $\Phi$ is any conformal map from $H$ onto $\HH$ fixing 0 and $\infty$. In the radial case, the random set $K$ in the upper half-plane $\HH$ connects one fixed boundary points, say 0, and one fixed interior point, say $i$, and given that $K$ stays in a simply connected open subset $H$ of $\HH$, the conditional law of $\Phi(K)$ is identical to that of $K$, where $\Phi$ is the conformal map from $H$ onto $\HH$ fixing 0 and $i$. 

It turns out that the random set with conformal restriction property are closely related to the intersection exponents of Brownian motion. The construction of these random sets relies on Schramm Loewner Evolution with parameter $\kappa=8/3$ and Poisson point processes of Brownian excursions and Brownian loops.
\end{abstract}

\begin{keyword}[class=MSC]
\kwd[Primary ]{60K35}
\kwd{60K35}
\kwd[; secondary ]{60J69}
\end{keyword}

\begin{keyword}
\kwd{Conformal Invariance, Restriction Property, Brownian Excursion, Brownian Loop, Schramm Loewner Evolution.}
\end{keyword}

\tableofcontents

\end{frontmatter}

\newpage
\noindent\textbf{Foreword}

The goal of these lectures is to review some of the results related to conformal restriction: the chordal case and the radial case. The audience of the summer school of Yau Mathematical Science Center, Tsinghua University, consists of senior undergraduates and graduates. Therefore, I assume knowledge in stochastic calculus (Brownian motion, It\^o formula etc.) and basic knowledge in complex analysis (Riemann's Mapping Theorem etc.).

These lecture notes are not a compilation of research papers, thus some details in the proofs are omitted. Also partly because of the limited number of lectures, I chose to focus on the main ideas of the proofs. Whereas, I cite the related papers for interested readers.

Of course, I would like to thank my advisor Wendelin Werner with whom I learned the topic on conformal restriction, SLE and solved conformal restriction problem for the radial case. I want to express my gratitude to all participants of the course, as well as to an anonymous reviewer who have sent me their comments and remarks on the previous draft of these notes.

It has been a great pleasure and a rewarding experience to go back to Tsinghua University and to give a lecture here where I spent four years of undergraduate. I owe my thanks to Prof. Yau and Prof. Poon for giving me the chance.
\medbreak
\noindent\textbf{Outline}

In Section \ref{sec::introduction}, I will briefly describe Brownian intersection exponents and conformal restriction property. The results are collected from \cite{LawlerWernerIntersectionExponents, LawlerWernerUniversalityExponent, LawlerSchrammWernerExponent1, LawlerSchrammWernerExponent2, LawlerSchrammWernerExponent3}. In fact, Brownian intersection exponents have close relation with Quantum Field Theory and the interested readers could consult \cite{LawlerWernerIntersectionExponents, DuplantierKwonConformalInvariance} and references there for more background and motivation. 
Section \ref{sec::bm} is a review on Brownian path: Brownian motion, Brownian excursion and Brownian loop. The results are collected from \cite{LawlerWernerBrownianLoopsoup, WernerConformalRestrictionRelated, WernerSelfavoidingLoop, WernerSelfavoidingLoop, SheffieldWernerCLE, LawlerWernerBrownianLoopsoup}. Section \ref{sec::chordal_sle} is an introduction on chordal SLE. Since I only need SLE$_{8/3}$ in the following of the lecture, I focus on simple SLE paths, i.e. $\kappa\in [0,4]$. For a more complete introduction on SLE, I recommend the readers to read the lecture note by Wendelin Werner \cite{WernerRandomPlanarcurves} or the book by Gregory Lawler \cite{LawlerConformallyInvariantProcesses}. Section \ref{sec::chordal_restriction} is about the chordal conformal restriction property. The results are collected from \cite{LawlerSchrammWernerConformalRestriction}. Section \ref{sec::radial_sle} is an introduction on radial SLE and again, for a more complete introduction on radial SLE, please read \cite{WernerRandomPlanarcurves, LawlerConformallyInvariantProcesses}. Section \ref{sec::radial_restriction} is about the radial restriction property. The results are contained in \cite{WuConformalRestrictionRadial}.
\medbreak
\noindent\textbf{Notations}
\[\HH=\{z\in\C: \Im(z)>0\}.\]
\[\U=\{z\in\C: |z|<1\},\quad \U(w,r)=\{z\in\C: |z-w|<r\}.\]
Denote \[f(\eps)\approx g(\eps)\quad \text{as } \eps\to 0, \quad \text{if}\quad \lim_{\eps\to 0}\frac{\log f(\eps)}{\log g(\eps)}=1.\]

\newpage
\section{Brownian intersection exponents and conformal restriction property}\label{sec::introduction}
\subsection{Intersection exponents of Brownian motion}
Probabilists and physicists are interested in the property of intersection exponents for two-dimensional Brownian motion (BM for short). Suppose that we have $n+p$ independent planar BMs: $B^1,\cdots, B^n$ and $W^1,\cdots, W^p$. $B^1,\cdots, B^n$ start from the common point $(1,1)$ and $W^1,\cdots, W^p$ start from the common point $(2,1)$. We want to derive the probability that the paths of \[B^j,j=1,\cdots,n \text{ up to time } t\] and the paths of \[W^l,l=1,\cdots, p\text{ up to time } t\] do not intersect. Precisely,
\[f_{n,p}(t):=\PP\left[\bigcup_{j=1}^n B^j[0,t]\ \bigcap\ \bigcup_{l=1}^p W^l[0,t]=\emptyset\right].\]

We can see that this probability decays as $t\to\infty$ roughly like a power of $t$,\footnote{Why? Hint: $f_{n,p}(ts)\thickapprox f_{n,p}(t)f_{n,p}(s)$.}
and the $(n,p)$-\textbf{whole plane intersection exponent} $\xi(n,p)$ is defined by\footnote{Why $\sqrt{t}$: for BM $B$, the diameter of $B[0,t]$ scales like $\sqrt{t}$.}
\[f_{n,p}(t)\thickapprox \left(\frac{1}{\sqrt{t}}\right)^{\xi(n,p)}, \quad t\to\infty.\]
We say that $\xi(n,p)$ is the whole plane intersection exponent between one packet of $n$ BMs and one packet of $p$ BMs.

Similarly, we can define more general intersection exponents between $k\ge 2$ packets of BMs containing $p_1,...,p_k$ paths respectively:
\[B^j_l, \quad  l=1,..., p_j, \quad j=1,...,k.\] Each path in $j$th packet starts from $(j,1)$ and has to avoid all paths of all other packets. The $(p_1,...,p_k)$-whole-plane intersection exponent $\xi(p_1,...,p_k)$ is defined through, as $t\to\infty$,
\[\PP\left[\bigcup_{u=1}^{p_{j_1}} B_u^{j_1}[0,t]\ \bigcap\ \bigcup_{v=1}^{p_{j_2}} B_v^{j_2}[0,t]=\emptyset, 1\le j_1<j_2\le k\right]\thickapprox \left(\frac{1}{\sqrt{t}}\right)^{\xi(p_1,...,p_k)}.\]

Another important quantity is \textbf{half-plane intersection exponents} of BMs. They are defined exactly as the whole-plane intersection exponents above except that one adds one more restriction that all BMs (up to time $t$) remain in the upper half-plane $\HH:=\{(x,y)\in\R^2: y>0\}$. We denote these exponents by $\tilde{\xi}(p_1,...,p_k)$. For instance, $\tilde{\xi}(1,1)$ is defined by, as $t\to\infty$,
\[\PP\left[B[0,t]\bigcap W[0,t]=\emptyset, B[0,t]\subset \HH, W[0,t]\subset \HH\right]\thickapprox \left(\frac{1}{\sqrt{t}}\right)^{\tilde{\xi}(1,1)}.\]

These exponents also correspond to the intersection exponents of planar simple random walk \cite{MandelbrotFractalNature, BurdzyLawlerNonintersectionExponentRW1, LawlerDisconnectionExponentSRW, LawlerIntersectionExponentSRW}. 
Several of these exponents correspond to Hausdorff dimensions of exceptional subsets of the planar Brownian motion or simple random walk \cite{LawlerCutBrownian, LawlerDimensionFrontierBM}. 
Physicists have made some striking conjectures about these exponents \cite{DuplantierKwonConformalInvariance, DuplantierLawlerGeometryBM} and they are proved by mathematicians later \cite{LawlerWernerIntersectionExponents,
LawlerSchrammWernerAnalyticityIntersectionExponentsBM, LawlerWernerUniversalityExponent,  LawlerSchrammWernerExponent1, LawlerSchrammWernerExponent2, LawlerSchrammWernerExponent3}. We list some of the results here.
\begin{enumerate}
\item There is a precise natural meaning of the exponents $\xi(\mu_1,...,\mu_k)$ and $\tilde{\xi}(\lambda_1,...,\lambda_k)$ for positive real numbers $\mu_1,...,\mu_k, \lambda_1,...,\lambda_k$. (See Sections~\ref{sec::chordal_restriction}  and \ref{sec::radial_restriction}).
\item These exponents satisfy certain functional relations
\begin{enumerate}
\item Cascade relations:
\[\tilde{\xi}(\lambda_1,...,\lambda_{j-1},\tilde{\xi}(\lambda_j,...,\lambda_k))=\tilde{\xi}(\lambda_1,...,\lambda_k)\]
\[\xi(\mu_1,...,\mu_{j-1},\tilde{\xi}(\mu_j,...,\mu_k))=\xi(\mu_1,...,\mu_k)\]
\item Commutation relations:
\[\tilde{\xi}(\lambda_1,\lambda_2)=\tilde{\xi}(\lambda_2,\lambda_1),\quad \xi(\mu_1,\mu_2)=\xi(\mu_2,\mu_1)\]
\end{enumerate}
\item One can define a positive, strictly increasing continuous function $U$ on $[0,\infty)$ by
\[U^2(\lambda)=\lim_{N\to\infty}\tilde{\xi}(\underset{N}{\underbrace{\lambda,...,\lambda}})/N^2.\]
Then we have
\[U(\tilde{\xi}(\lambda_1,...,\lambda_k))=U(\lambda_1)+\cdots+U(\lambda_k).\]
This shows in particular that $\tilde{\xi}$ is encoded in $U$.
\item The whole-plane intersection exponent $\xi$ can be represented as a function of the half-plane intersection exponent $\tilde{\xi}:$
\[\xi(\mu_1,...,\mu_k)=\eta(\tilde{\xi}(\mu_1,...,\mu_k)).\] The function $\eta$ is called a generalized disconnection exponent and it is a continuous increasing function.
\item Physicists predict that 
\[\tilde{\xi}(\underset{N}{\underbrace{1,...,1}})=\frac{1}{3}N(2N+1),\quad \xi(\underset{N}{\underbrace{1,...,1}})=\frac{1}{12}(4N^2-1).\]
\end{enumerate}

Combining all these results, we could predict that:
\[U(1)=\sqrt{\frac{2}{3}},\quad U(\lambda)=\frac{\sqrt{24\lambda+1}-1}{\sqrt{24}}\]
\begin{equation}\label{eqn::wholeplane_exponent}\tilde{\xi}(\lambda_1,...,\lambda_k)=\frac{1}{24}\left((\sqrt{24\lambda_1+1}+\cdots+\sqrt{24\lambda_k+1}-(k-1))^2-1\right)\end{equation}
\[\eta(x)=\frac{1}{48}\left((24x+1)^2-4\right)\]
\begin{equation}\label{eqn::halfplane_exponent}\xi(\mu_1,...,\mu_k)=\frac{1}{48}\left((\sqrt{24\mu_1+1}+\cdots+\sqrt{24\mu_k+1}-k)^2-4\right)\end{equation}

\subsection{From Brownian motion to Brownian excursion}\label{subsec::intro_bm_be}
Consider the simplest exponent $\tilde{\xi}(1)=1$. Suppose $B$ is a planar BM started from $i$, then we have
\[\PP[B[0,t]\subset \HH]\thickapprox (\frac{1}{\sqrt{t}})^{\tilde{\xi}(1)}.\]
Suppose $W$ is a planar BM started from $\eps i$, then
\[\PP[W[0,t]\subset \HH]\thickapprox (\frac{\eps}{\sqrt{t}})^{\tilde{\xi}(1)},\]
since $(W(\eps^2t)/\eps, t\ge 0)$ has the same law as $B$. Consider the law of $W$ conditioned on the event ${W[0,t]\subset\HH}$, we can see that the limit as $t\to\infty,\eps\to 0$ exists. We call the limit as Brownian excursion and denote its law as $\mu^{\sharp}_{\HH}(0,\infty)$. There is another equivalent way to define $\mu^{\sharp}_{\HH}(0,\infty)$: Suppose $W$ is a planar BM started from $\eps i$, consider the law of $W$ conditioned on the event $[W\text{ hits }\R+iR\text{ before }\R]$. Let $R\to\infty,\eps\to 0$, the limit is the same as $\mu^{\sharp}_{\HH}(0,\infty)$. (We will discuss Brownian excursion in more detail in Section~\ref{sec::bm}).

Suppose $Z$ is a Brownian excursion, $A$ is a bounded closed subset of $\overline{\HH}$ such that $\HH\setminus A$ is simply connected and $0\not\in A$. 
Riemann's Mapping Theorem says that, if we fix three boundary points, there exists a unique conformal map from $\HH\setminus A$ onto $\HH$ fixing the three points. In our case, since $A$ is bounded closed and $0\not\in A$, any conformal map from $\HH\setminus A$ onto $\HH$ can be extended continuously around the origin and $\infty$. 
Let $\Phi_A$ be the unique conformal map from $\HH\setminus A$ onto $\HH$ such that
\[\Phi_A(0)=0,\quad \Phi_A(\infty)=\infty,\quad \Phi_A(z)/z\to 1\text{ as }z\to\infty.\]
Consider the law of $\Phi_A(Z)$ conditioned on $[Z\cap A=\emptyset]$. We have that, for any bounded function $F$,
\begin{align*}
\E&[F(\Phi_A(Z))\cond Z\cap A=\emptyset]\notag\\
&=\lim_{R\to\infty,\eps\to 0} \E[F(\Phi_A(W))\cond W\cap A=\emptyset, W\text{ hits }\R+iR\text{ before }\R]\tag{\text{W: BM started from $\eps i$}}\\
&=\lim_{R\to\infty,\eps\to 0} \frac{\E\left[F(\Phi_A(W)) 1_{\{W\cap A=\emptyset, W\text{ hits }\R+iR\text{ before }\R\}}\right]}{\PP[W\cap A=\emptyset, W\text{ hits }\R+iR\text{ before }\R]}\\
&=\lim_{R\to\infty,\eps\to 0} \frac{\E\left[F(\Phi_A(W)) 1_{\{W\text{ hits }\R+iR\text{ before }\R\}}\cond W\cap A=\emptyset\right]}{\PP[W\text{ hits }\R+iR\text{ before }\R\cond W\cap A=\emptyset]}.
\end{align*}
Conditioned on $[W\cap A=\emptyset]$, the process $\tilde{W}=\Phi_A(W)$ has the same law as a BM started from $\Phi_A(\eps i)$, thus
\begin{eqnarray*}
\lefteqn{\E[F(\Phi_A(Z))\cond Z\cap A=\emptyset]}\\
&=&\lim_{R\to\infty,\eps\to 0} \frac{\E[F(\tilde{W}) 1_{\{\tilde{W}\text{ hits }\Phi_A(\R+iR)\text{ before }\R\}}]}{\PP[\tilde{W}\text{ hits }\Phi_A(\R+iR)\text{ before }\R]}\\
&=&\E[F(Z)].
\end{eqnarray*}
In other words, the Brownian excursion $Z$ satisfies the following conformal restriction property: the law of $\Phi_A(Z)$ conditioned on $[Z\cap A=\emptyset]$ is the same as $Z$ itself.
Conformal restriction property is closely related to the half-plane/whole-plane intersection exponents.
\subsection{Chordal conformal restriction property}
\begin{definition}\label{def::collection_chordal}
Let $\LA_c$ be the collection of all bounded closed subset $A\subset\overline{\HH}$ such that
\[0\not\in A,\quad A=\overline{A\cap\HH}, \quad \text{ and }\HH\setminus A \text{ is simply connected}.\]
Denote by $\Phi_A$ the conformal map from $\HH\setminus A$ onto $\HH$ such that
\[\Phi_A(0)=0,\quad \Phi_A(\infty)=\infty,\quad \Phi_A(z)/z\to 1\text{ as }z\to\infty.\]
\end{definition}
We are interested in closed random subset $K$ of $\overline{\HH}$ such that
\begin{enumerate}
\item [(1)] $K\cap \R=\{0\}$, $K$ is unbounded, $K$ is connected and $\HH\setminus K$ has two connected components
\item [(2)] $\forall \lambda>0$, $\lambda K$ has the same law as $K$
\item [(3)] For any $A\in\LA_c$, we have that the law of $\Phi_A(K)$ conditioned on $[K\cap A=\emptyset]$ is the same as $K$.
\end{enumerate}
The combination of the above properties is called \textbf{chordal conformal restriction property}, and the law of such a random set is called \textbf{chordal restriction measure}. From Section \ref{subsec::intro_bm_be}, we see that the Brownian excursion satisfies the chordal conformal restriction property only except Condition (1). Consider a Brownian excursion path $Z$, it divides $\HH$ into many connected components and there is one, denoted by $C_+$, that has $\R_+$ on the boundary and there is one, denoted by $C_-$, that has $\R_-$ on the boundary. We define the ``fill-in'' of $e$ to be the closure of the subset $\HH\setminus (C_-\cup C_+)$. Then the ``fill-in" of $Z$ satisfies Condition (1), and the ``fill-in" of the Brownian excursion satisfies chordal conformal restriction property. Moreover, for $n\ge 1$, the the union of the ``fill-in" of $n$ independent Brownian excursions also satisfies the chordal conformal restriction property. Then one has a natural question: except Brownian excursions, do there exist other chordal restriction measure, and what are all of them? 

It turns out that there exists only a one-parameter family $\PP(\beta)$ of such probability measures for $\beta\ge 5/8$ \cite{ LawlerSchrammWernerConformalRestriction} and there are several papers related to this problem \cite{LawlerWernerUniversalityConformalInvariantIntersectionExponents, WernerConformalRestrictionRelated}. More detail in Sections~\ref{sec::chordal_sle} and \ref{sec::chordal_restriction}. 
The complete answer to this question relies on the introduction of SLE process \cite{SchrammFirstSLE}. In particular, an important ingredient is that SLE$_{8/3}$ satisfies chordal conformal restriction property. It is worthwhile to spend a few words on the specialty for SLE$_{8/3}$. In \cite{LawlerWernerUniversalityExponent}, the authors predicted a strong relation between Brownian motion, self-avoiding walks, and critical percolation. The boundary of the critical percolation interface satisfies conformal restriction property and the computations of its exponents yielded the Brownian intersection exponents. It is proved that the scaling limits of critical percolation interface is SLE$_6$ for triangle lattice \cite{SmirnovPercolationConformalInvariance} and the boundary of SLE$_6$ is locally SLE$_{8/3}$. 
Self-avoiding walk also exhibits conformal restriction property. It is conjectured \cite{LawlerSchrammWernerScalinglimitSAW} that the scaling limit of self-avoiding walk is SLE$_{8/3}$. All these observations indicate that SLE$_{8/3}$ is a key object in describing conformal restriction property. 

As expected, for $n\ge 1$,  the union of the ``fill-in" of $n$ independent Brownian excursions corresponds to $\PP(n)$ and, for $\beta\ge 5/8$, the measure $\PP(\beta)$ can viewed as the law of a packet of $\beta$ independent Brownian excursions.
The chordal restriction measures are closely related to half-plane intersection exponent (will be proved in Section~\ref{subsec::restriction_exponents_chordal}):
Suppose $K_1,...,K_p$ are $p$ independent chordal restriction samples of parameters $\beta_1,...,\beta_p$ respectively. The ``fill-in" of the union of these sets
\[\bigcup_{j=1}^p K_j\] conditioned on the event (viewed as a limit)
\[[K_{j_1}\cap K_{j_2}=\emptyset, 1\le j_1<j_2\le p]\]
has the same law as a chordal restriction sample of parameter $\tilde{\xi}(\beta_1,...,\beta_p)$.

\subsection{Radial conformal restriction property}
\begin{definition}\label{def::collection_radial}
Let $\LA_r$ be the collection of all compact subset $A\subset\overline{\U}$ such that
\[0\not\in A, 1\not\in A,\quad A=\overline{A\cap\U},\quad \text{ and }\U\setminus A\text{ is simply connected}.\]
Denote by $\Phi_A$ the conformal map from $\U\setminus A$ onto $\U$ such that\footnote{Riemann's Mapping Theorem asserts that, if we have one interior point and one boundary point, there exists a unique conformal map from $\U\setminus A$ onto $\U$ that fixes the interior point and the boundary point.}
\[\Phi_A(0)=0,\quad \Phi_A(1)=1.\]
\end{definition}
We are interested in closed random subset $K$ of $\bar{\U}$ such that
\begin{enumerate}
\item [(1)] $K\cap\partial\U=\{1\}$, $0\in K$, $K$ is connected and $\U\setminus K$ is connected
\item [(2)] For any $A\in\LA_r$, the law of $\Phi_A(K)$ conditioned on $[K\cap A=\emptyset]$ is the same as $K$.
\end{enumerate}
The combination of the above properties is called \textbf{radial conformal restriction property}, and the law of such a random set is called \textbf{radial restriction measure}. It turns out there exists only a two-parameter family $\QQ(\alpha,\beta)$ of such probability measures (more detail in Sections~\ref{sec::radial_sle} and \ref{sec::radial_restriction}) for
\[\beta\ge 5/8, \quad \alpha\le \xi(\beta).\]
The radial restriction measures are closely related to whole-plane intersection exponent (will be proved in Section~\ref{subsec::restriction_exponents_radial}):
Suppose $K_1,...,K_p$ are $p$ independent radial restriction samples of parameters $(\xi(\beta_1),\beta_1),...,(\xi(\beta_p),\beta_p)$ respectively. The ``fill-in" of the union of these sets
\[\bigcup_{j=1}^p K_j\] conditioned on the event (viewed as a limit)
\[[K_{j_1}\cap K_{j_2}=\emptyset, 1\le j_1<j_2\le p]\]
has the same law as a radial restriction sample of parameter $(\xi(\beta_1,...,\beta_p), \tilde{\xi}(\beta_1,...,\beta_p))$.

\newpage
\section{Brownian motion, excursion and loop}\label{sec::bm}
\subsection{Brownian motion}
Suppose that $W^1,W^2$ are two independent 1-dimensional BMs, then $B:=W^1+iW^2$ is a complex BM.
\begin{lemma}
Suppose $B$ is a complex BM and $u$ is a harmonic function, then $u(B)$ is a local martingale.
\end{lemma}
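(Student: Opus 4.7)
The plan is to apply It\^o's formula to $u(B_t)$ and observe that the harmonicity of $u$ kills the finite-variation drift term, leaving a pure stochastic-integral expression which is automatically a local martingale.

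First I would recall the setup: write $B_t = W^1_t + i W^2_t$ with $W^1, W^2$ independent real BMs, so the quadratic (co)variations are $d\langle W^1 \rangle_t = d\langle W^2 \rangle_t = dt$ and $d\langle W^1, W^2\rangle_t = 0$. Since $u$ is harmonic on an open set $D \subset \C$, it is $C^\infty$ there, so It\^o's formula applies to $u(W^1_t, W^2_t)$ up to the stopping time $\tau = \inf\{t \ge 0 : B_t \notin D\}$ (or up to $\infty$ if $u$ is defined on all of $\C$). This yields, for $t < \tau$,
\[
u(B_t) - u(B_0) = \int_0^t \partial_x u(B_s)\, dW^1_s + \int_0^t \partial_y u(B_s)\, dW^2_s + \frac{1}{2}\int_0^t (\partial_{xx} u + \partial_{yy} u)(B_s)\, ds.
\]

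The key step is then to invoke the definition of harmonic: $\Delta u = \partial_{xx} u + \partial_{yy} u \equiv 0$ on $D$, so the last integral vanishes identically. What remains is a sum of two stochastic integrals against Brownian motion, whose integrands $\partial_x u(B_s)$ and $\partial_y u(B_s)$ are continuous and adapted (hence locally bounded along the path). By the standard theory of stochastic integration against Brownian motion, such integrals are continuous local martingales, and a sum of local martingales is a local martingale. Hence $u(B_{t \wedge \tau})$ is a local martingale.

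The only real obstacle is technical rather than conceptual: one must be careful that $B$ stays inside the domain of definition of $u$ so that It\^o's formula is legitimately applicable. If $u$ is harmonic on a proper open subset $D \subset \C$, the statement should be interpreted as saying that $u(B_{t\wedge\tau})$ is a local martingale up to the exit time $\tau$ from $D$; one can make this rigorous by localizing with a sequence of compactly contained subdomains $D_n \uparrow D$ on which $\partial_x u, \partial_y u$ are bounded, showing the stopped processes are genuine martingales, and then passing to the local-martingale limit. No new ideas are needed beyond It\^o's formula and the definition of harmonicity; the statement is essentially the probabilistic face of $\Delta u = 0$.
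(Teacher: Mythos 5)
Your proof is correct and takes essentially the same route as the paper: apply It\^o's formula to $u(B_t)$, observe that harmonicity kills the $\frac{1}{2}\Delta u\, dt$ drift term, and conclude that the remaining stochastic integrals form a local martingale. The extra care you take about the domain of definition of $u$ and localization up to the exit time $\tau$ is a reasonable technical refinement but does not change the argument.
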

\begin{proof}
By It\^{o}'s Formula,
\begin{align*}
du(B_t)&=\partial_xu(B_t)dW_t^1+\partial_yu(B_t)dW_t^2+\frac{1}{2}(\partial_{xx}u(B_t)+\partial_{yy}u(B_t))dt\\
&=\partial_xu(B_t)dW_t^1+\partial_yu(B_t)dW_t^2.
\end{align*}
\end{proof}
\begin{proposition}\label{prop::bm_conformalinvariance}
Suppose $D$ is a domain and $f:D\to\C$ is a conformal map. Let $B$ be a complex BM starting from $z\in D$, stopped at
\[\tau_D:=\inf\{t\ge 0: B_t\not\in D\}.\]
Then the time-changed process $f(B)$ has the same law as a complex BM starting from $f(z)$ stopped at $\tau_{f(D)}$. Namely,
define
\[S(t)=\int_0^t |f'(B_u)|^2du,\quad 0\le t<\tau_D,\]
\[\sigma(s)=S^{-1}(s), \quad i.e. \int_0^{\sigma(s)}|f'(B_u)|^2du=s.\]
Then $(Y_s=f(B_{\sigma(s)}),0\le s\le S_{\tau_D})$ has the same law as BM starting from $f(z)$ stopped at $\tau_{f(D)}$.
\end{proposition}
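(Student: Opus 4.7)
The plan is to reduce the statement to Lévy's characterization of planar Brownian motion after a suitable time change. Write $f = u + iv$ where $u,v$ are harmonic, so by the preceding lemma applied coordinate-wise, $M_t := u(B_t)$ and $N_t := v(B_t)$ are continuous local martingales on $[0, \tau_D)$. The first task is to compute their (cross-)quadratic variations. By Itô's formula we already have the explicit semimartingale decompositions $dM_t = u_x(B_t)\,dW^1_t + u_y(B_t)\,dW^2_t$ and analogously for $N_t$, so
\[\langle M\rangle_t = \int_0^t (u_x^2+u_y^2)(B_s)\,ds,\quad \langle N\rangle_t = \int_0^t (v_x^2+v_y^2)(B_s)\,ds,\quad \langle M,N\rangle_t = \int_0^t (u_xv_x+u_yv_y)(B_s)\,ds.\]
Next I would invoke the Cauchy–Riemann equations $u_x=v_y$, $u_y=-v_x$, which immediately give $u_x^2+u_y^2 = v_x^2+v_y^2 = |f'|^2$ and $u_xv_x+u_yv_y = 0$. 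Hence $\langle M\rangle_t = \langle N\rangle_t = S(t)$ and $\langle M,N\rangle_t \equiv 0$.

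Now I would perform the time change. Since $f$ is conformal, $|f'|>0$ on $D$, so $S$ is strictly increasing and continuous on $[0,\tau_D)$, with inverse $\sigma:[0,S_{\tau_D})\to[0,\tau_D)$. Setting $\widetilde{M}_s = M_{\sigma(s)}$ and $\widetilde{N}_s = N_{\sigma(s)}$, the standard time-change theorem for continuous local martingales (Dambis–Dubins–Schwarz in its two-dimensional form, applied with the common clock $S$) yields two continuous local martingales with respect to the time-changed filtration satisfying $\langle\widetilde M\rangle_s = \langle\widetilde N\rangle_s = s$ and $\langle\widetilde M,\widetilde N\rangle_s = 0$. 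By Lévy's characterization, $Y_s = \widetilde M_s + i\widetilde N_s = f(B_{\sigma(s)})$ is a complex Brownian motion started at $f(z)$.

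It remains to match stopping times, and this is the step to be most careful about. I would argue that $\sigma$ maps $[0,S_{\tau_D})$ bijectively onto $[0,\tau_D)$, and that as $t\uparrow\tau_D$ the point $B_t$ approaches $\partial D$, so $f(B_t)$ approaches $\partial f(D)$ (since $f$ is a homeomorphism $D\to f(D)$). Consequently $Y$ stays in $f(D)$ on $[0,S_{\tau_D})$ and exits $f(D)$ exactly at time $S_{\tau_D}$, so the exit time of $Y$ from $f(D)$ equals $\tau_{f(D)} = S_{\tau_D}$. Two subtleties I would flag: first, $S_{\tau_D}$ can be infinite (for instance if $f(D)$ is unbounded and the limit is $\infty$), which is fine as long as I interpret the stopping time accordingly; second, the argument that $f(B_t)\to\partial f(D)$ uses that $f$ extends continuously across no prime ends but simply that preimages of compact subsets of $f(D)$ are compact in $D$, which follows from $f$ being a conformal bijection. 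These boundary considerations are the main obstacle, while the martingale computations themselves are routine.
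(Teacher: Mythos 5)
Your proposal follows essentially the same route as the paper: write $f=u+iv$, use It\^o to identify $u(B)$ and $v(B)$ as continuous local martingales, compute their quadratic and cross variations via the Cauchy--Riemann equations, and conclude by L\'evy's characterization after the time change by $S$. The paper stops at ``the two coordinates of $Y$ are independent local martingales with quadratic variation $t$,'' whereas you correctly spell out the time-change/DDS step and, usefully, flag the matching of $S_{\tau_D}$ with $\tau_{f(D)}$ (via properness of the conformal bijection) that the paper leaves implicit; this is a genuine gap-filling addition rather than a different method.
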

\begin{proof}
Write $f=u+iv$ where $u,v$ are harmonic and \[\partial_xu=\partial_yv, \quad \partial_yu=-\partial_xv.\]
We have \[du(B_t)=\partial_xu(B_t)dW_t^1+\partial_yu(B_t)dW_t^2,\quad dv(B_t)=\partial_xv(B_t)dW_t^1+\partial_yv(B_t)dW_t^2.\]
Thus the two coordinates of $f(B)$ are local martingales and the quadratic variation is
\[\langle u(B)\rangle_t=\langle v(B)\rangle_t=\int_0^t (\partial_xu^2(B_s)+\partial_yu^2(B_s))ds=\int_0^t |f'(B_s)|^2ds,\]
\[\langle u(B), v(B)\rangle_t=(\partial_xu(B_t)\partial_xv(B_t)+\partial_yu(B_t)\partial_yv(B_t))dt=0.\]
Thus the two coordinates of $Y$ are independent local martingales with quadratic variation $t$ which implies that $Y$ is a complex BM.
\end{proof}

We introduce some notations about measures on continuous curves. 
Let $\LK$ be the set of all parameterized continuous planar curves $\gamma$ defined on a time interval $[0,t_{\gamma}]$. $\LK$ can be viewed as a metric space
\[d_{\LK}(\gamma,\eta)=\inf_{\theta}\sup_{0\le s\le t_{\gamma}} |s-\theta(s)|+|\gamma(s)-\eta(\theta(s))|\]
where the inf is taken over all increasing homeomorphisms $\theta: [0,t_{\gamma}]\to [0,t_{\eta}]$. Note that $\LK$ under this metric does not identify curves that are the same modulo time-reparametrization.

If $\mu$ is any measure on $\LK$, let $|\mu|=\mu(\LK)$ denote the total mass. If $0<|\mu|<\infty$, let $\mu^{\sharp}=\mu/|\mu|$ be $\mu$ normalized to be a probability measure. Let $M$ denote the set of finite Borel measures on $\LK$. This is a metric space under Prohorov metric \cite[Section 6]{BillingsleyConvergenceProbabilityMeasures}. To show that a sequence of finite measures $\mu_n$ converges to a finite measure $\mu$, it suffices to show that
\[|\mu_n|\to |\mu|,\quad \mu_n^{\sharp}\to \mu^{\sharp}.\]

If $D$ is a domain, we say that $\gamma$ is in $D$ if $\gamma(0,t_{\gamma})\subset D$, and let $\LK(D)$ be the set of $\gamma\in\LK$ that are in $D$. Note that, we do not require the endpoints of $\gamma$ to be in $D$. Suppose $f:D\to D'$ is a conformal map and $\gamma\in\LK(D)$. Let
\begin{equation}\label{eqn::bm_timechange}S(t)=\int_0^{t}|f'(\gamma(s))|^2ds.\end{equation}
If $S(t)<\infty$ for all $t<t_{\gamma}$, define $f\circ\gamma$ by
\[(f\circ\gamma)(S(t))=f(\gamma(t)).\]
If $\mu$ is a measure supported on the set of $\gamma$ in $\LK(D)$ such that $f\circ\gamma$ is well-defined and in $\LK(D')$, then $f\circ\mu$ denotes the measure
\[f\circ\mu(V)=\mu[\gamma: f\circ\gamma\in V].\]

\medbreak
\noindent\textbf{From interior point to interior point}
\medbreak
Let $\mu(z,\cdot;t)$ denote the law of complex BM $(B_s,0\le s\le t)$ starting from $z$. We can write
\[\mu(z,\cdot;t)=\int_{\C}\mu(z,w;t)dA(w)\]
where $dA(w)$ denotes the area measure and $\mu(z,w;t)$ is a measure on continuous curve from $z$ to $w$. The total mass of $\mu(z,w;t)$ is
\begin{equation}\label{eqn::bm_heatkernel}|\mu(z,w;t)|=\frac{1}{2\pi t}\exp(-\frac{|z-w|^2}{2t}).\end{equation}
The normalized measure $\mu^{\sharp}(z,w;t)=\mu(z,w;t)/|\mu(z,w;t)|$ is a probability measure, and it is called a \textbf{Brownian bridge} from $z$ to $w$ in time $t$.
The total mass $|\mu(z,w;t)|$ is also called heat kernel and Equation~(\ref{eqn::bm_heatkernel}) can be obtained through
\[|\mu(z,D; t)|=\PP^z[B_t\in D]=\int_D \frac{1}{2\pi t}\exp(-\frac{|z-w|^2}{2t})dA(w).\]

The measure $\mu(z,w)$ is defined by
\[\mu(z,w)=\int_0^{\infty}\mu(z,w;t)dt.\]
This is a $\sigma$-finite infinite measure. If $D$ is a domain and $z,w\in D$, define $\mu_D(z,w)$ to be $\mu(z,w)$ restricted to curves stayed in $D$. If $z\neq w$, and $D$ is a domain such a BM in $D$ eventually exits $D$, then $|\mu_D(z,w)|<\infty$. Define \textbf{Green's function}
\[G_D(z,w)=\pi |\mu_D(z,w)|.\]
In particular, $G_{\U}(0,z)=-\log|z|$.
\begin{proposition}[Conformal Invariance] Suppose $f: D\to D'$ is a conformal map, $z,w$ are two interior points in $D$. Then
\[f\circ\mu_D(z,w)=\mu_{f(D)}(f(z), f(w)).\]
In particular,
\[G_{f(D)}(f(z),f(w))=G_D(z,w),\quad (f\circ\mu_D)^{\sharp}(z,w)=\mu^{\sharp}_{f(D)}(f(z),f(w)).\]
\end{proposition}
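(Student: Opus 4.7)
\medbreak
\noindent\textbf{Proof proposal.}

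The key tool is Proposition~\ref{prop::bm_conformalinvariance}; the main task will be to convert a pathwise time-change into an equality between two $\sigma$-finite measures on paths of variable duration. My plan is to test both measures against an arbitrary pair of bounded continuous functions $G$ on $\LK$ and $H'$ on $\C$. Writing the disintegration as
\[\int\!\!\int F(\gamma) H(w) d\mu_D(z,w)(\gamma) dA(w) = \E_z\!\left[\int_0^{\tau_D} F(B|_{[0,t]}) H(B_t)\,dt\right],\]
(which is just Fubini applied to $\mu_D(z,\cdot)=\int_0^\infty\mu_D(z,\cdot;t)\,dt=\int\mu_D(z,w)\,dA(w)$), I would specialize to $F(\gamma)=G(f\circ\gamma)$ and $H(w)=H'(f(w))$.

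On the right-hand side I would perform the time-change $u=S(t)=\int_0^t|f'(B_s)|^2\,ds$, so that $dt=|f'(B_t)|^{-2}du=|(f^{-1})'(\tilde B_u)|^2\,du$ with $\tilde B_u:=f(B_{\sigma(u)})$. By Proposition~\ref{prop::bm_conformalinvariance}, $\tilde B$ is a Brownian motion from $f(z)$ stopped at $\tau_{f(D)}$, so after re-applying the same disintegration identity in the domain $f(D)$ the right-hand side becomes
\[\int\!\!\int G(\eta) H'(w') |(f^{-1})'(w')|^2 d\mu_{f(D)}(f(z),w')(\eta) dA(w').\]
The left-hand side, after the area change of variables $w'=f(w)$ with $dA(w)=|(f^{-1})'(w')|^2 dA(w')$, becomes
\[\int H'(w') |(f^{-1})'(w')|^2 \int G(\eta) d(f\circ \mu_D(z,f^{-1}(w')))(\eta) dA(w').\]
The Jacobian factor $|(f^{-1})'(w')|^2\,dA(w')$ is common to both expressions, and since $G$ and $H'$ are arbitrary I can cancel $H'$, cancel this common factor, and conclude
\[f\circ \mu_D(z,w) = \mu_{f(D)}(f(z),f(w))\]
for a.e.\ $w\in D$, extending to every interior $w$ by continuity of the Brownian bridge in its endpoints.

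The two stated corollaries are immediate consequences. Pushforwards preserve total mass, so $|\mu_D(z,w)|=|\mu_{f(D)}(f(z),f(w))|$, which multiplied by $\pi$ gives conformal invariance of Green's function. Dividing the measure identity by this common positive total mass yields the equality of normalized probability measures.

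The main technical obstacle is really just the bookkeeping of the two Jacobians: the factor $|(f^{-1})'(\tilde B_u)|^2$ coming from the time reparametrization forced by Proposition~\ref{prop::bm_conformalinvariance}, and the factor $|(f^{-1})'(w')|^2$ coming from the area change of variables on the endpoint. They cancel exactly because in two dimensions a conformal map rescales both area and the Brownian clock by the same factor $|f'|^2$, which is the geometric heart of Proposition~\ref{prop::bm_conformalinvariance}. A small side point to check is that neither $|\mu_D(z,w)|$ nor $\E_z[\tau_D]$ needs to be finite for the argument: everything is phrased as an identity of $\sigma$-finite measures, and the test functions $G$, $H'$ can be chosen compactly supported to guarantee integrability at each step.
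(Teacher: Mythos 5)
Your argument is correct and rests on exactly the same tool the paper invokes, namely Proposition~\ref{prop::bm_conformalinvariance}: the paper's proof is the one-line citation ``By Proposition~\ref{prop::bm_conformalinvariance}'', and your proposal simply writes out the bookkeeping (the disintegration of $\mu_D(z,\cdot)$, the matching $|(f^{-1})'|^2$ Jacobians from the time change and the area change) that the paper leaves implicit.
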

\begin{proof}
By Proposition~\ref{prop::bm_conformalinvariance}.
\end{proof}
 
\medbreak
\noindent\textbf{From interior point to boundary point}
\medbreak
Suppose $D$ is a connected domain. Let $B$ be a BM starting from $z\in D$ and stopped at
\[\tau_D=\inf\{t: B_t\not\in D\}.\]
Define $\mu_D(z,\partial D)$ to be the law of $(B_s,0\le s\le \tau_D)$. If $D$ has nice boundary (i.e. $\partial D$ is piecewise analytic), we can write
\[\mu_D(z,\partial D)=\int_{\partial D}\mu_D(z,w)dw\]
where $dw$ is the length measure and $\mu_D(z,w)$ is a measure on continuous curves from $z$ to $w$. Define \textbf{Poisson's kernel}
\[H_D(z,w)=|\mu_D(z,w)|.\]
In particular, $H_{\U}(0,w)=1/(2\pi)$. The measure $ |\mu_D(z,w)|dw$ on $\partial D$ is called the harmonic measure seen from $z$, and the Poisson's kernel is the density of this harmonic measure.  

The normalized measure $\mu^{\sharp}_D(z,w)=\mu_D(z,w)/|\mu_D(z,w)|$ can also be viewed as the law of BM conditioned to exit $D$ at $w$ when $w$ is a nice boundary point (i.e. $\partial D$ is analytic in a neighborhood of $w$):
\begin{eqnarray*}
\lefteqn{\PP^z[\cdot\cond B_{\tau_D}\in \U(w,\eps)]}\\
&=&\frac{\mu_D(z,\U(w,\eps))[\cdot]}{|\mu_D(z,\U(w,\eps))|}\\
&=&\frac{\int_{\U(w,\eps)}\mu_D(z,u)[\cdot]du}{\int_{\U(w,\eps)}|\mu_D(z,u)|du}\rightarrow \mu_D^{\sharp}(z,w),\quad\text{as }\eps\to 0.
\end{eqnarray*}

\begin{proposition}[Conformal Covariance] Suppose $D$ is a connected domain with nice boundary, $z\in D$,$ w\in\partial D$ is a nice boundary point. Let $f:D\to D'$ be a conformal map. Then
\[f\circ\mu_D(z,w)=|f'(w)|\mu_{f(D)}(f(z),f(w)).\]
In particular,
\[|f'(w)|H_{f(D)}(f(z),f(w))=H_D(z,w),\quad (f\circ\mu_D)^{\sharp}(z,w)=\mu^{\sharp}_{f(D)}(f(z),f(w)).\]
\end{proposition}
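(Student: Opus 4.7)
The plan is to lift this covariance statement from the already-established path-wise conformal invariance of Brownian motion up to its exit time (Proposition~\ref{prop::bm_conformalinvariance}), and then to disintegrate along the boundary. Since $w$ is a nice boundary point, $f$ extends conformally to a neighbourhood of $w$, so $f(w)$ is a nice boundary point of $D'=f(D)$ and $f$ maps a small boundary arc through $w$ diffeomorphically, with arc-length Jacobian $|f'(w)|$.

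First, let $B$ be a BM started at $z$, stopped at $\tau_D$, so that its law is exactly $\mu_D(z,\partial D)$. Proposition~\ref{prop::bm_conformalinvariance} tells us that the time-changed image $Y_s=f(B_{\sigma(s)})$ is a BM from $f(z)$ stopped at $\tau_{D'}$, so at the level of measures
\[f\circ\mu_D(z,\partial D)=\mu_{D'}(f(z),\partial D').\]
Next, decompose both sides along the respective boundaries. On the left,
\[f\circ\mu_D(z,\partial D)=\int_{\partial D}(f\circ\mu_D(z,w))\,dw,\]
while on the right, changing variables via $w'=f(w)$ and $dw'=|f'(w)|\,dw$ yields
\[\mu_{D'}(f(z),\partial D')=\int_{\partial D'}\mu_{D'}(f(z),w')\,dw'=\int_{\partial D}\mu_{D'}(f(z),f(w))\,|f'(w)|\,dw.\]
Equating the two disintegrations over the same base measure $dw$ on $\partial D$, and appealing to uniqueness of the disintegration, gives $f\circ\mu_D(z,w)=|f'(w)|\,\mu_{D'}(f(z),f(w))$. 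Taking total masses produces the Poisson kernel identity $H_D(z,w)=|f'(w)|H_{D'}(f(z),f(w))$, and dividing by the total masses produces the equality of the normalised probability measures.

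The main technical point is the uniqueness of the disintegration along $\partial D$, which is only meaningful once we know that $w\mapsto\mu_D(z,w)$ is weakly continuous at nice boundary points; this is the one step that uses more than the interior conformal invariance. One can either establish it directly from continuity of the Poisson kernel plus the strong Markov property at the hitting time of a small neighbourhood of $w$, or bypass disintegration entirely by applying the conditional-law characterisation proved just above: conditioning $B$ on $\{B_{\tau_D}\in\U(w,\eps)\}$, pushing forward by $f$, and using that $f(\U(w,\eps))$ is asymptotically $\U(f(w),|f'(w)|\eps)$ as $\eps\to 0$. Either route reduces the proposition to conformal invariance of BM plus the Jacobian factor $|f'(w)|$ arising from the transformation of boundary arc length.
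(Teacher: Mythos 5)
Your proof is correct and is the natural filling-in of the gap the paper leaves (this proposition is stated without a proof; the preceding interior-to-interior invariance is proved simply ``By Proposition~\ref{prop::bm_conformalinvariance}''). Your route is exactly what that pattern suggests: conformal invariance of stopped BM gives $f\circ\mu_D(z,\partial D)=\mu_{f(D)}(f(z),\partial f(D))$, and disintegrating both sides against arc length, with the boundary change of variables $dw'=|f'(w)|\,dw$, produces the Jacobian factor; your remark that one needs weak continuity of $w\mapsto\mu_D(z,w)$ at nice boundary points (or the conditional-law characterisation via $\{B_{\tau_D}\in\U(w,\eps)\}$) to justify equating the disintegration kernels is the right technical point to make explicit.
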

\noindent\textbf{Relation between the two}
\begin{proposition}\label{prop::relation_two}
Suppose $D$ is a connected domain with nice boundary, $z\in D$, and $w\in\partial D$ is a nice boundary point. Let $\bn_w$ denote the inward normal at $w$, then
\[\lim_{\eps\to 0}\frac{1}{2\eps}\mu_D(z,w+\eps\bn_w)=\mu_D(z,w).\]
In particular,
\[\frac{1}{2\pi\eps}G_D(z,w+\eps\bn_w)\to H_D(z,w), \text{ as }\eps\to 0;\]
\[\mu^{\sharp}_D(z,w_n)\to \mu_D^{\sharp}(z,w)\text{ as }w_n\in D \to w.\]
\end{proposition}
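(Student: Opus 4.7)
The plan is to split the first display into total-mass convergence (the second display) and shape convergence (the third display), reduce both to the half-plane case by conformal invariance, and then do the half-plane work explicitly. For the reduction, let $f\colon D\to\HH$ be a conformal map with $f(w)=0$; by the analyticity of $\partial D$ near $w$, $f$ extends analytically across $w$, and orientation preservation forces $f'(w)\mathbf{n}_w=|f'(w)|\,i$, so $f(w+\eps\mathbf{n}_w)=\eps|f'(w)|\,i+O(\eps^2)$. Conformal invariance of the interior-to-interior measure gives $f\circ\mu_D(z,w+\eps\mathbf{n}_w)=\mu_{\HH}(f(z),\eps|f'(w)|\,i+o(\eps))$, and conformal covariance of the interior-to-boundary measure gives $f\circ\mu_D(z,w)=|f'(w)|\,\mu_{\HH}(f(z),0)$. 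Dividing the former by $2\eps$, the factor $|f'(w)|$ cancels against the latter, so it suffices to prove all three displays in the canonical setting $D=\HH$, $w=0$, $\mathbf{n}_w=i$.

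For the total mass, use $|\mu_{\HH}(z,u)|=G_{\HH}(z,u)/\pi$ and $G_{\HH}(z,u)=\log|z-\ov u|-\log|z-u|$ to get
\[G_{\HH}(z,\eps i)=\log\frac{|z+\eps i|}{|z-\eps i|}=\frac{2\eps\,\Im z}{|z|^2}+O(\eps^2),\]
so $\tfrac{1}{2\pi\eps}G_{\HH}(z,\eps i)\to \Im z/(\pi|z|^2)=H_{\HH}(z,0)=|\mu_{\HH}(z,0)|$. Transporting back via the reduction together with the classical identity $H_D(z,w)=\tfrac{1}{2\pi}\partial_{\mathbf{n}_w}G_D(z,w)$ (itself a consequence of the above half-plane computation) proves the second display for arbitrary $D$.

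For the shape, decompose $\mu_{\HH}(z,w_n)=\int_0^\infty\mu_{\HH}(z,w_n;t)\,dt$, with each duration-$t$ bridge having total mass equal to the Dirichlet heat kernel $p_t^{\HH}(z,w_n)=p_t(z,w_n)-p_t(z,\ov{w_n})$ obtained by reflection. A first-order expansion as $\Im w_n\to 0$ shows that $p_t^{\HH}(z,w_n)/(2\Im w_n)\to\Im z\cdot t^{-1}p_t(z,0)$ pointwise in $t$ with an integrable dominating bound, and the $\HH$-Brownian bridge of duration $t$ from $z$ to $w_n$ converges weakly, in its endpoint, to the Brownian-excursion bridge from $z$ to $0$ of duration $t$. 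Dominated convergence then yields $(2\Im w_n)^{-1}\mu_{\HH}(z,w_n)(F)\to\mu_{\HH}(z,0)(F)$ for every bounded continuous $F$ on $\LK$; dividing by the total-mass limit of the previous step (so the prefactor $\Im w_n$ cancels) gives the third display $\mu_{\HH}^{\sharp}(z,w_n)\to\mu_{\HH}^{\sharp}(z,0)$, and multiplying the two gives the first display.

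The hardest step will be the uniform-in-$t$ control needed for the dominated convergence, in particular for small $t$ where the bridge endpoint hugs the boundary and the conditioning becomes singular. A less computational alternative is to identify $\mu_D^{\sharp}(z,u)$ for interior $u$ as the Doob $h$-transform of killed planar BM with $h=G_D(\cdot,u)$: the second step applied with a variable interior base point gives $G_D(\cdot,w_n)/(2\pi\,\mathrm{dist}(w_n,\partial D))\to H_D(\cdot,w)$ uniformly on compact subsets of $D\setminus\{z\}$ bounded away from $w$, the constant prefactor disappears under $\nabla\log$, and standard stability of $h$-transformed processes under such uniform convergence of $h$ away from its singularities delivers the shape convergence without any heat-kernel bookkeeping.
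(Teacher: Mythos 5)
Your proposal is correct and more detailed than the paper's own proof, though both rest on the same elementary idea: compute the Green's function explicitly in a reference domain, expand to first order, and transport by conformal invariance/covariance. Where the paper takes the unit disk with the fixed interior point $0$, so that $G_{\U}(0,(1-\eps)w)=-\log(1-\eps)\approx\eps$ and $H_{\U}(0,w)=1/(2\pi)$ give the total-mass asymptotic immediately, you take $\HH$ with $w=0$ and a generic interior point $z$, computing $G_{\HH}(z,\eps i)=\log\frac{|z+\eps i|}{|z-\eps i|}\approx\frac{2\eps\Im z}{|z|^2}$ against $H_{\HH}(z,0)=\frac{\Im z}{\pi|z|^2}$. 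The two are equivalent via a M\"obius map; keeping a general $z$ saves you from having to carry the base point along in the conformal transport, and your expansion $f(w+\eps\bn_w)=\eps|f'(w)|i+O(\eps^2)$ correctly matches the factor $|f'(w)|$ from the covariance rule for $\mu_D(z,w)$.

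The genuine difference is that you address the convergence of the normalized (shape) measures $\mu^{\sharp}_D(z,w_n)\to\mu^{\sharp}_D(z,w)$ head-on, via the time-sliced heat-kernel decomposition or, alternatively, via stability of Doob $h$-transforms under local-uniform convergence of $h$. The paper's proof text only verifies the total-mass relation and then asserts the full measure convergence with ``This implies that $\lim_{\eps\to 0}\frac{1}{2\eps}\mu_{\U}(0,(1-\eps)w)=\mu_{\U}(0,w)$,'' leaving the shape convergence implicit. Your heat-kernel computation $p_t^{\HH}(z,w_n)/(2\Im w_n)\to\Im z\,t^{-1}p_t(z,0)$ is correct (indeed $|z-w|^2-|z-\bar w|^2=-4\Im z\,\Im w$ exactly), and you are right that the delicate step is the small-$t$ domination; your $h$-transform route is the cleaner way to dispatch that. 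Both your routes are sound, and your write-up is a reasonable filling-in of what the paper only sketches.
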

\begin{proof}
Note that $G_{\U}(0,z)=-\log|z|$ and $H_{\U}(0,w)=1/2\pi$, thus 
\[G_{\U}(0,(1-\eps)w)=-\log(1-\eps)\approx \eps=2\pi\eps H_{\U}(0,w).\]
This implies that 
\[\lim_{\eps\to 0}\frac{1}{2\eps}\mu_{\U}(0, (1-\eps)w)=\mu_{\U}(0,w).\]
The conclusion for general domain $D$ can be obtained via conformal invariance/covariance. 
\end{proof}
\subsection{Brownian excursion}
Suppose $D$ is a connected domain with nice boundary and $z,w$ are two distinct nice boundary points. Define the measure on Brownian path from $z$ to $w$ in $D$:
\[\mu_D(z,w)=\lim_{\eps\to 0}\frac{1}{\eps}\mu_D(z+\eps\bn_z,w)=\lim_{\eps\to 0}\frac{1}{2\eps^2}\mu_D(z+\eps\bn_z,w+\eps\bn_w).\]
Denote
\[H_D(z,w)=|\mu_D(z,w)|.\]
The normalized measure $\mu_D^{\sharp}(z,w)$ is called Brownian excursion measure in $D$ with two end points $z,w\in\partial D$.
Note that
\[H_D(z,w)=\lim_{\eps\to 0}H_D(z+\eps\bn_z,w),\quad H_{\HH}(0,x)=\frac{1}{\pi x^2}.\]
\begin{proposition}[Conformal Covariance] Suppose that $f:D\to D'$ is a conformal map, and $z,w\in\partial D$, $f(z),f(w)\in\partial f(D)$ are nice boundary points. Then
\[f\circ\mu_D(z,w)=|f'(z)f'(w)|\mu_{f(D)}(f(z),f(w)).\]
In particular,
\[|f'(z)f'(w)|H_{f(D)}(f(z),f(w))=H_D(z,w),\quad (f\circ\mu_D)^{\sharp}(z,w)=\mu_{f(D)}^{\sharp}(f(z),f(w)).\]
\end{proposition}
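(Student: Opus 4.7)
The strategy is to peel the definition of $\mu_D(z,w)$ back one layer, use the already established conformal covariance from interior point to boundary point, and then re-assemble the limit on the $f(D)$ side. Concretely, I would use the one-sided version of the definition,
\[\mu_D(z,w) = \lim_{\eps\to 0}\frac{1}{\eps}\mu_D(z+\eps\bn_z, w),\]
so that the inner measure is a Brownian measure from an interior point $z_\eps := z+\eps\bn_z$ to the nice boundary point $w$. To it I apply the interior-to-boundary Conformal Covariance Proposition, giving
\[f\circ \mu_D(z_\eps, w) = |f'(w)|\,\mu_{f(D)}(f(z_\eps), f(w)).\]

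Next I would analyze $f(z_\eps)$. Since $z$ is a nice boundary point of $D$, $f$ is analytic in a neighborhood of $z$ (and sends a neighborhood of $z$ in $\overline{D}$ conformally to a neighborhood of $f(z)$ in $\overline{f(D)}$). The Taylor expansion
\[f(z+\eps\bn_z) = f(z) + \eps f'(z)\bn_z + O(\eps^2),\]
together with the fact that a conformal map preserves angles between the boundary and an inward normal, shows that $f'(z)\bn_z = |f'(z)|\bn_{f(z)}$. Writing $\eta = \eps|f'(z)|$, we obtain
\[f(z_\eps) = f(z) + \eta\,\bn_{f(z)} + O(\eta^2).\]

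Plugging this back gives
\[f\circ\mu_D(z,w) = \lim_{\eps\to 0}\frac{|f'(w)|}{\eps}\,\mu_{f(D)}\!\left(f(z)+\eta\bn_{f(z)},\, f(w)\right) = |f'(z)||f'(w)|\,\lim_{\eta\to 0}\frac{1}{\eta}\mu_{f(D)}(f(z)+\eta\bn_{f(z)}, f(w)),\]
which by the very definition of the excursion measure on $f(D)$ equals $|f'(z)f'(w)|\mu_{f(D)}(f(z),f(w))$, proving the identity. The stated corollaries on $H_D$ and $\mu_D^{\sharp}$ follow by taking total mass and normalizing, using that the Jacobian factors cancel in the probability measure.

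The delicate step, and the one I would need to justify carefully, is interchanging $\lim_{\eps\to 0}$ with the composition by $f$ and with the approximation $f(z_\eps) \approx f(z)+\eta\bn_{f(z)}$: we are claiming convergence of finite measures, not merely of their total masses. For the total masses this is straightforward from $|f'|$ and the Poisson-kernel formula, but for the measures themselves the required continuity is $\mu_{f(D)}^{\sharp}(u_n,f(w))\to \mu_{f(D)}^{\sharp}(f(z),f(w))$ as $u_n\to f(z)$ along the inward normal, which is exactly Proposition~\ref{prop::relation_two} applied in $f(D)$. The $O(\eps^2)$ discrepancy between $f(z_\eps)$ and $f(z)+\eta\bn_{f(z)}$ is then absorbed by this continuity, since along the inward normal $\mu_{f(D)}^{\sharp}$ depends continuously on the basepoint, and the $1/\eps$ scaling is matched by $1/\eta$ up to the factor $|f'(z)|$.
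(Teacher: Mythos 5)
The paper states this Conformal Covariance proposition without giving a proof (as it does for the preceding interior-to-boundary version), so there is no paper proof to compare against; the judgment must be on the merits of your argument alone. Your proof is correct and is the natural route: unfold one $\eps$-limit in the definition $\mu_D(z,w)=\lim_{\eps\to 0}\frac{1}{\eps}\mu_D(z+\eps\bn_z,w)$, apply the already-stated interior-to-boundary conformal covariance, use conformality of $f$ near the nice boundary point $z$ to get $f(z+\eps\bn_z)=f(z)+\eps|f'(z)|\bn_{f(z)}+O(\eps^2)$, and substitute $\eta=\eps|f'(z)|$ to reassemble the definition of $\mu_{f(D)}(f(z),f(w))$. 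You also correctly flag the only genuinely delicate point: the $O(\eps^2)$ displacement takes $f(z_\eps)$ slightly off the inward normal ray at $f(z)$, so one needs convergence of the (normalized) measures and not merely their total masses under an approach from a nontangential sequence of interior points, which is exactly what the last display of Proposition~\ref{prop::relation_two} supplies; for the total masses the ratio $|f(z_\eps)-f(z)|/\eta\to 1$ makes the $1/\eps$ and $1/\eta$ scalings match up to the $|f'(z)|$ factor. The argument is sound.
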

The following proposition is an equivalent expression of the conformal restriction property of Brownian excursion we discussed in Subsection~\ref{subsec::intro_bm_be}.
\begin{proposition}\label{prop::be_restriction}
Suppose $A\in\LA_c$ and $\Phi_A$ is the conformal map defined in Definition~\ref{def::collection_chordal}. Let $e$ be a Brownian excursion whose law is $\mu^{\sharp}_{\HH}(0,\infty)$. Then
\[\PP[e\cap A=\emptyset]=\Phi'_A(0).\]
\end{proposition}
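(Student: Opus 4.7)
The strategy is to use the second limiting definition of the Brownian excursion law $\mu_\HH^\sharp(0,\infty)$ given in Subsection~\ref{subsec::intro_bm_be}: if $W$ is a planar Brownian motion started from $\eps i$, then
\[\PP[e\cap A=\emptyset]=\lim_{\eps\to 0,\, R\to\infty}\frac{\PP_{\eps i}[W\cap A=\emptyset,\, W \text{ hits }\R+iR \text{ before }\R]}{\PP_{\eps i}[W \text{ hits }\R+iR \text{ before }\R]}.\]
Since $\Im(W)$ is a one-dimensional Brownian motion started from $\eps$, optional stopping at the hitting time of $\{0,R\}$ shows that the denominator equals $\eps/R$.

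For the numerator I would invoke Proposition~\ref{prop::bm_conformalinvariance} with $D=\HH\setminus A$ and $f=\Phi_A$: on the event that $W$ exits $D$ through $\R\setminus A$ (equivalently, $W\cap A=\emptyset$), the time-changed image $\tilde W=\Phi_A(W)$ has the law of a Brownian motion started from $\Phi_A(\eps i)$ stopped at the first exit of $\HH$. Under this correspondence, $W\cap(\R+iR)$ is transported to $\tilde W\cap\Phi_A(\R+iR)$, so up to the negligible event that $\tilde W$ exits through the image of $\partial A\cap\HH$, the numerator equals the probability that $\tilde W$ hits the curve $\Phi_A(\R+iR)$ before $\R$. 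The hydrodynamic normalization $\Phi_A(z)/z\to 1$ forces $\Phi_A(\R+iR)$ to lie within $O(1)$ of $\R+iR$ for large $R$, so sandwiching by the harmonic function $\Im(z)/(R\pm C)$ gives this probability as $\Im(\Phi_A(\eps i))/R+o(1/R)$.

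Finally, since $A\in\LA_c$ is bounded and $0\notin A$, Schwarz reflection extends $\Phi_A$ analytically across a real neighborhood of $0$, and because $\Phi_A$ is orientation preserving with $\Phi_A(0)=0$ one has $\Phi_A'(0)>0$. Taylor expanding, $\Phi_A(\eps i)=\Phi_A'(0)\,\eps i+O(\eps^2)$, so $\Im(\Phi_A(\eps i))=\Phi_A'(0)\eps+O(\eps^2)$. Dividing by $\eps/R$ yields $\Phi_A'(0)+o(1)$ as $\eps\to 0,R\to\infty$, which proves the claim.

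The main obstacle, and the step requiring the most care, is controlling the two error sources simultaneously. One must verify that (a) after $\tilde W$ reaches height of order $R$, the probability that it subsequently exits $\HH$ through the bounded arc $\Phi_A(\partial A\cap\HH)$ is only $O(1/R)$, so the joint event is well-approximated by the cleaner hitting event pushed through $\Phi_A$; and (b) on compact subsets of $\HH$, the harmonic function on $\HH$ with boundary value $1$ on $\Phi_A(\R+iR)$ and $0$ on $\R$ differs from $\Im(z)/R$ by $o(1/R)$. Both (a) and (b) reduce to the quantitative behavior $\Phi_A(z)=z+O(1)$ at infinity forced by the normalization, so neither causes a real difficulty once they are made precise.
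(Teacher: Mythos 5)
Your proof is correct, but it takes a genuinely different route from the paper's. You start from the ``interior point sent to infinity'' definition of $\mu_{\HH}^{\sharp}(0,\infty)$ (Brownian motion from $\eps i$ conditioned to hit $\R+iR$ before $\R$), apply conformal invariance of planar Brownian motion to transport the problem to $\HH$, and then read off the answer from an elementary harmonic-measure estimate. The paper instead uses the other limiting definition, $\mu_{\HH}^{\sharp}(0,\infty)=\lim_{x\to\infty}\mu_{\HH}^{\sharp}(0,x)$, together with the restriction identity $\mu_{\HH}(0,x)[e\cap A=\emptyset]=|\mu_{\HH\setminus A}(0,x)|$ and the conformal covariance of the boundary Poisson kernel $H_D$; the ratio $H_{\HH\setminus A}(0,x)/H_{\HH}(0,x)=\Phi_A'(0)\Phi_A'(x)\,x^2/\Phi_A(x)^2$ then tends to $\Phi_A'(0)$ by the hydrodynamic normalization, without any harmonic-measure estimates. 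The paper's approach is shorter once the excursion-measure formalism of Section~\ref{sec::bm} is in place, because all error control is absorbed into the covariance rule; your approach is more elementary and is essentially a rigorous version of the heuristic calculation in Subsection~\ref{subsec::intro_bm_be}.

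Two small remarks on your write-up. First, the ``negligible event that $\tilde W$ exits through the image of $\partial A\cap\HH$'' does not actually arise: the event $\{W \text{ hits } \R+iR \text{ before } \R\text{ and avoids } A\}$ is exactly the event $\{\tau_{\R+iR}<\tau_{\HH\setminus A}\}$, and conformal invariance maps it bijectively onto $\{\tilde\tau_{\Phi_A(\R+iR)}<\tilde\tau_{\R}\}$, so the numerator is identically the hitting probability for $\tilde W$; no correction term is needed. Second, you could sharpen the geometric input: since $\Phi_A(z)=z+b_0+O(1/z)$ with $b_0\in\R$, the curve $\Phi_A(\R+iR)$ actually lies within $O(1/R)$ of $\R+iR$, not merely $O(1)$; either bound suffices for your sandwich argument.
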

\begin{proof}
Although $\mu_{\HH}(0,\infty)$ has zero total mass, the normalized measure can still be defined through the limit procedure:
\[\mu_{\HH}^{\sharp}(0,\infty)=\lim_{x\to\infty}\mu_{\HH}^{\sharp}(0,x)=\lim_{x\to\infty}\mu_{\HH}(0,x)/|\mu_{\HH}(0,x)|.\]
Thus
\begin{align*}
\PP[e\cap A=\emptyset]&=\lim_{x\to\infty}\mu_{\HH}(0,x)[e\cap A=\emptyset]/|\mu_{\HH}(0,x)|\\
&=\lim_{x\to\infty}|\mu_{\HH\setminus A}(0,x)|/|\mu_{\HH}(0,x)|\\
&=\lim_{x\to\infty}H_{\HH\setminus A}(0,x)/H_{\HH}(0,x)\\
&=\lim_{x\to\infty}\Phi'_A(0)\Phi'_A(x)H_{\HH}(0,\Phi_A(x))/H_{\HH}(0,x)=\Phi'_A(0).
\end{align*}
\end{proof}
Note that, the excursion measure $\mu^{\sharp}_{\HH}(0,\infty)$ introduced in this section does coincide with the one we introduced in Section \ref{subsec::intro_bm_be}: by Proposition \ref{prop::relation_two} and the continuous dependence of Brownian bridge measure on the end points, we have 
\[\mu_{\HH}^{\sharp}(0,\infty)=\lim_{\eps\to 0, R\to\infty}\mu^{\sharp}_{\HH}(\eps i, R+i\R).\]
\begin{corollary}
Suppose $e_1,...,e_n$ are $n$ independent Brownian excursion with law $\mu^{\sharp}_{\HH}(0,\infty)$, denote
$\Sigma=\cup_{j=1}^n e_j$, then for any $A\in\LA_c$,
\[\PP[\Sigma\cap A=\emptyset]=\Phi'_A(0)^n.\]
\end{corollary}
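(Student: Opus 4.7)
The plan is to reduce this immediately to Proposition \ref{prop::be_restriction} using independence. The key observation is that, for any subset $A$ of $\overline{\HH}$, the event $\{\Sigma \cap A = \emptyset\}$ is equivalent to the intersection of the events $\{e_j \cap A = \emptyset\}$ for $j=1,\ldots,n$, because a union of sets is disjoint from $A$ if and only if each individual set is. This is a set-theoretic triviality that requires no probabilistic input.

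Once this reduction is made, I would invoke the mutual independence of $e_1,\ldots,e_n$ to factor:
\[\PP[\Sigma \cap A = \emptyset] = \PP\!\left[\bigcap_{j=1}^n \{e_j \cap A = \emptyset\}\right] = \prod_{j=1}^n \PP[e_j \cap A = \emptyset].\]
Then, since each $e_j$ has law $\mu_{\HH}^{\sharp}(0,\infty)$, Proposition \ref{prop::be_restriction} gives $\PP[e_j \cap A = \emptyset] = \Phi'_A(0)$ for each $j$, and the product collapses to $\Phi'_A(0)^n$.

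There is no genuine obstacle here; the corollary is a one-line consequence of the preceding proposition once independence is invoked. The only point that might deserve a brief remark is that $A \in \LA_c$ ensures $\Phi_A$ is well-defined and $\Phi'_A(0)$ makes sense, but this is already built into Definition \ref{def::collection_chordal}, so nothing further is needed.
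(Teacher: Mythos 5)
Your proof is correct, and it is exactly the one-line argument the paper intends (the paper omits the proof precisely because it follows immediately from Proposition~\ref{prop::be_restriction} and independence, as you observe).
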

\begin{corollary}\label{cor::be_conformalrestriction}
Let $e$ be a Brownian excursion with law $\mu^{\sharp}_{\HH}(x,y)$ where $x,y\in\R, x\neq y$. Then, for any closed subset $A\subset\bar{\HH}$ such that $x,y\not\in A$ and $\HH\setminus A$ is simply connected, we have that
\[\PP[e\cap A=\emptyset]=\Phi'(x)\Phi'(y)\]
where $\Phi$ is any conformal map from $\HH\setminus A$ onto $\HH$ that fixes $x$ and $y$. Note that the quantity $\Phi'(x)\Phi'(y)$ is unique although $\Phi$ is not unique.
\end{corollary}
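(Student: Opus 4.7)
My plan is to mimic exactly the strategy used in the proof of Proposition \ref{prop::be_restriction}, replacing the pair of boundary points $(0,\infty)$ with the pair $(x,y)$ and the specific hydrodynamically normalized map by an arbitrary conformal map fixing $x$ and $y$. The key input is that the unnormalized excursion measure $\mu_{\HH}(x,y)$ restricted to curves avoiding $A$ is exactly $\mu_{\HH\setminus A}(x,y)$, so the conditional probability is a ratio of total masses (Poisson boundary-to-boundary kernels):
\[\PP[e\cap A=\emptyset]=\frac{|\mu_{\HH}(x,y)[e\cap A=\emptyset]|}{|\mu_{\HH}(x,y)|}=\frac{|\mu_{\HH\setminus A}(x,y)|}{|\mu_{\HH}(x,y)|}=\frac{H_{\HH\setminus A}(x,y)}{H_{\HH}(x,y)}.\]

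Next I would apply the boundary-to-boundary conformal covariance of the excursion measure (the last Proposition before Proposition \ref{prop::be_restriction}) to the conformal map $\Phi:\HH\setminus A\to\HH$. Since $x,y$ are nice boundary points of $\HH\setminus A$ (they lie on $\R$ and are not in $A$, so $\partial(\HH\setminus A)$ is analytic in a neighborhood of each), $\Phi$ extends analytically across $x$ and $y$ by Schwarz reflection, and $\Phi'(x),\Phi'(y)$ are real and positive. The covariance formula yields
\[H_{\HH\setminus A}(x,y)=\Phi'(x)\Phi'(y)\,H_{\HH}(\Phi(x),\Phi(y))=\Phi'(x)\Phi'(y)\,H_{\HH}(x,y),\]
using $\Phi(x)=x$, $\Phi(y)=y$. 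Combining with the previous display gives the stated formula.

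For the well-definedness claim, suppose $\Phi_1,\Phi_2$ are two conformal maps from $\HH\setminus A$ onto $\HH$ fixing $x$ and $y$. Then $T:=\Phi_2\circ\Phi_1^{-1}$ is a Möbius automorphism of $\HH$ fixing $x$ and $y$. Conjugating by the Möbius map $z\mapsto(z-x)/(z-y)$ sends $(x,y)$ to $(0,\infty)$, and the automorphisms of $\HH$ fixing $0$ and $\infty$ are precisely the dilations $w\mapsto \lambda w$ for $\lambda>0$. A direct computation in this coordinate (the derivative at $0$ is $\lambda$ and, computed in the chart $1/w$, the derivative at $\infty$ is $1/\lambda$) gives $T'(x)T'(y)=1$. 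By the chain rule $\Phi_2'(x)\Phi_2'(y)=T'(x)T'(y)\,\Phi_1'(x)\Phi_1'(y)=\Phi_1'(x)\Phi_1'(y)$, so the right-hand side of the corollary does not depend on the choice of $\Phi$.

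The only technical subtlety is that Proposition \ref{prop::be_restriction} phrased things with the specific hydrodynamic normalization at $\infty$; here I need the boundary-to-boundary excursion measure $\mu_{\HH}(x,y)$ for two finite points and its conformal covariance, but this is exactly the content of the boundary-to-boundary conformal covariance statement already in the excerpt. I do not expect any serious obstacle beyond bookkeeping of the invariance of $\Phi'(x)\Phi'(y)$ under the one-parameter family of hyperbolic automorphisms fixing $x$ and $y$, which is handled in the last paragraph.
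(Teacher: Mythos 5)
Your argument is correct and is exactly the natural derivation the paper intends: the identity $\PP[e\cap A=\emptyset]=H_{\HH\setminus A}(x,y)/H_{\HH}(x,y)$ together with the boundary-to-boundary conformal covariance $|f'(z)f'(w)|H_{f(D)}(f(z),f(w))=H_D(z,w)$ applied to $\Phi$ gives the formula, mirroring the computation in the proof of Proposition~\ref{prop::be_restriction} with the limit $x\to\infty$ replaced by the fixed pair $(x,y)$. The paper states the corollary without proof, and your well-definedness check (that $T=\Phi_2\circ\Phi_1^{-1}$ is an automorphism of $\HH$ fixing $x,y$, hence conjugate to a dilation, so $T'(x)T'(y)=1$) correctly supplies the justification for the remark that $\Phi'(x)\Phi'(y)$ does not depend on the choice of $\Phi$.
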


\begin{definition}
Suppose $D$ has nice boundary, then \textbf{Brownian excursion measure} is defined as
\[\mu_{D,\partial D}^{exc}=\int_{\partial D}\int_{\partial D}\mu_D(z,w)dzdw.\]
Generally, if $I$ is a subsect of $\partial D$, define
\[\mu_{D,I}^{exc}=\int_I\int_I\mu_D(z,w)dzdw.\]
\end{definition}
\begin{proposition}[Conformal Invariance] Suppose $D,D'$ have nice boundaries and $f:D\to D'$ is a conformal map. Then
\[f\circ\mu_{D,I}^{exc}=\mu_{f(D),f(I)}^{exc},\quad f\circ\mu_{D,\partial D}^{exc}=\mu_{f(D),\partial f(D)}^{exc}.\]
\end{proposition}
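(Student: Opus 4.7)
The plan is to reduce the statement for the excursion measure $\mu_{D,I}^{exc}$ to the previously established conformal covariance for the two-point excursion measures $\mu_D(z,w)$, and absorb the pointwise derivative factors into the change-of-variables formula on the boundary.

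First I would unpack the definition. By definition,
\[
f\circ\mu_{D,I}^{exc}=\int_I\int_I f\circ\mu_D(z,w)\,dz\,dw,
\]
which makes sense as an identity of measures on curves (integrating the pushforward through a $\sigma$-finite family of finite measures on $\LK(D')$, once one notes that $\mu_D(z,w)$ is supported on curves staying in $D$ and $f$ maps such curves to curves in $D'=f(D)$ by the time change \eqref{eqn::bm_timechange}). Then I would invoke the pointwise conformal covariance proved just above, namely $f\circ\mu_D(z,w)=|f'(z)f'(w)|\,\mu_{f(D)}(f(z),f(w))$, to rewrite the integrand.

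Second, I would perform a change of variables on the boundary. Since $f$ extends to a $C^1$ bijection between the nice parts of $\partial D$ and $\partial D'$ with boundary Jacobian $|f'(z)|$ for arc length measure, setting $z'=f(z)$, $w'=f(w)$ gives $dz'=|f'(z)|\,dz$ and $dw'=|f'(w)|\,dw$. Thus
\[
\int_I\int_I |f'(z)f'(w)|\,\mu_{f(D)}(f(z),f(w))\,dz\,dw=\int_{f(I)}\int_{f(I)}\mu_{f(D)}(z',w')\,dz'\,dw',
\]
and the right-hand side is exactly $\mu_{f(D),f(I)}^{exc}$. The case $I=\partial D$ is just the special case where $f(I)=\partial f(D)$, which follows because $f$ sends $\partial D$ bijectively onto $\partial D'$.

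The only delicate point, and the step I would spend the most care on, is justifying the interchange of the pushforward by $f$ with the integral over the boundary, and the change-of-variables formula for the (possibly non-finite) $\sigma$-finite excursion measure. This is essentially a monotone-class/Fubini argument: test against a bounded measurable $F:\LK(D')\to\R_+$ supported on curves with compact trace in $D'$, so that everything in sight is finite and the pointwise covariance integrates cleanly; then extend by monotone convergence to general positive measurable test functions. Once this bookkeeping is in place, the proof is merely the calculation above.
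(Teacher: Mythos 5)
Your proof is correct and follows essentially the same route as the paper: unpack the definition of $\mu_{D,I}^{exc}$, apply the pointwise conformal covariance $f\circ\mu_D(z,w)=|f'(z)f'(w)|\,\mu_{f(D)}(f(z),f(w))$ for boundary-to-boundary excursion measures, and absorb the derivative factors by a change of variables on the boundary. The paper presents this as a three-line display and leaves the measure-theoretic interchange implicit, whereas you flag and sketch the Fubini/monotone-class justification, but the underlying argument is the same.
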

\begin{proof}
\begin{align*}
f\circ\mu_{D,I}^{exc}&=\int_I\int_I f\circ\mu_D(z,w)dzdw\\
&=\int_I\int_I |f'(z)f'(w)|\mu_{f(D)}(f(z),f(w))dzdw\\
&=\int_{f(I)}\int_{f(I)}\mu_{f(D)}(z,w)dzdw=\mu_{f(D),f(I)}^{exc}.\end{align*}
\end{proof}
\begin{theorem}\label{thm::ppp_be_restriction}
Let $(e_j,j\in J)$ be a Poisson point process with intensity $\pi\beta\mu^{exc}_{\HH,\R_-}$ for some $\beta>0$. Set $\Sigma=\cup_j e_j$. For any $A\in\LA_c$ such that $A\cap \R\subset (0,\infty)$, we have that
\[\PP[\Sigma\cap A=\emptyset]=\Phi'_A(0)^{\beta}.\]
\end{theorem}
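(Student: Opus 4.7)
My approach is to use the standard formula that for a Poisson point process $(e_j)$ on a Polish space of curves with $\sigma$-finite intensity $\nu$,
\[\PP[\Sigma\cap A=\emptyset]=\exp\bigl(-\nu\{e:e\cap A\neq\emptyset\}\bigr).\]
Applied to $\nu=\pi\beta\mu^{exc}_{\HH,\R_-}$, this reduces the theorem to the single identity $M(A):=\mu^{exc}_{\HH,\R_-}\{e:e\cap A\neq\emptyset\}=-\frac{1}{\pi}\log\Phi'_A(0)$.

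Using the disintegration $\mu^{exc}_{\HH,\R_-}=\int_{\R_-}\int_{\R_-}\mu_\HH(x,y)\,dx\,dy$, the conformal covariance $H_{\HH\setminus A}(x,y)=\Phi'_A(x)\Phi'_A(y)H_\HH(\Phi_A(x),\Phi_A(y))$, and the explicit formula $H_\HH(x,y)=1/(\pi(x-y)^2)$, I would write
\[\pi M(A)=\int_{-\infty}^0\!\int_{-\infty}^0\!\left(\frac{1}{(x-y)^2}-\frac{\Phi'_A(x)\Phi'_A(y)}{(\Phi_A(x)-\Phi_A(y))^2}\right)dx\,dy.\]
The hypothesis $A\cap\R\subset(0,\infty)$ ensures that $\Phi_A$ extends analytically across $\R_-$ as an increasing bijection $\R_-\to\R_-$ fixing $0$ and sending $-\infty$ to $-\infty$, so all quantities above are well-defined and $\Phi'_A(0)>0$.

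The key computation is to observe that the integrand is an exact derivative in $y$:
\[\frac{1}{(x-y)^2}-\frac{\Phi'_A(x)\Phi'_A(y)}{(\Phi_A(x)-\Phi_A(y))^2}=\frac{\partial}{\partial y}\!\left[\frac{1}{x-y}-\frac{\Phi'_A(x)}{\Phi_A(x)-\Phi_A(y)}\right].\]
The bracketed quantity vanishes at $y=-\infty$ and equals $\tfrac{1}{x}-\Phi'_A(x)/\Phi_A(x)$ at $y=0$. Integrating once more in $x$ yields the antiderivative $\log\bigl|x/\Phi_A(x)\bigr|$; by the Taylor expansion $\Phi_A(x)=\Phi'_A(0)x+O(x^2)$ near $0$ this limits to $\log(1/\Phi'_A(0))$ as $x\to 0^-$, while by $\Phi_A(z)/z\to 1$ at $\infty$ it limits to $0$ as $x\to -\infty$. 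Therefore $\pi M(A)=-\log\Phi'_A(0)$, which is the required identity.

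The main technical step will be justifying integrability and the iterated antiderivative evaluations. Near the diagonal $x=y$ both terms of the integrand individually diverge like $1/(\pi(x-y)^2)$, but a direct Taylor expansion of $\Phi_A$ around $x$ shows their difference stays bounded. Near infinity, the expansion $\Phi_A(z)=z+O(1)$ (differentiated) shows the integrand decays like $1/(xy)^2$, so the double integral is absolutely convergent and Fubini applies; the same estimates legitimize evaluating the inner and outer antiderivatives at the endpoints. I expect this book-keeping of asymptotics to be the most delicate piece of the argument, but it is entirely routine once the antiderivative structure above has been spotted.
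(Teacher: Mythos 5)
Your proof is correct and takes a genuinely different route from the paper. Both arguments reduce the theorem, via the exponential formula for Poisson point processes, to the single identity $\mu^{exc}_{\HH,\R_-}[e\cap A\neq\emptyset]=-\frac{1}{\pi}\log\Phi_A'(0)$, and both pass to the double-integral representation over $\R_-\times\R_-$ using the boundary Poisson kernel and its conformal covariance. From there the paper argues indirectly: it uses the composition identity $\Phi_{A\cdot B}=\Phi_A\circ\Phi_B$ to derive a functional equation showing $\mu^{exc}_{\HH,\R_-}[e\cap A\neq\emptyset]=c\log\Phi_A'(0)$ for a universal constant $c$ (citing an external reference for the rigorous version of the ``$F(s+t)=F(s)+F(t)\Rightarrow F(t)=ct$'' step), and then fixes $c=-1/\pi$ by a separate local expansion on a short interval $I=[-\eps,0]$. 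You instead evaluate the double integral in one stroke by observing that the integrand is an iterated exact derivative. This is more elementary and self-contained: it bypasses the functional-equation step entirely and never needs the auxiliary small-interval computation.

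The only soft spot in your write-up is the integrability discussion. The claim that the integrand ``decays like $1/(xy)^2$'' near infinity is not quite right: the leading behavior for $|x|,|y|$ large is of order $(x^{-1}+y^{-1})^2/(x-y)^2$, which does not separate as $1/(xy)^2$, and near the diagonal at large $|x|$ the finiteness comes from a Schwarzian-derivative cancellation rather than from this crude bound. But you do not actually need any decay estimate: the integrand
\[
\frac{1}{(x-y)^2}-\frac{\Phi_A'(x)\Phi_A'(y)}{(\Phi_A(x)-\Phi_A(y))^2}
=\pi H_\HH(x,y)\Bigl(1-\mu^\sharp_\HH(x,y)[e\cap A=\emptyset]\Bigr)
\]
is manifestly nonnegative because $\mu^\sharp_\HH(x,y)[e\cap A=\emptyset]=\Phi'_{x,y}(x)\Phi'_{x,y}(y)\le 1$. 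Nonnegativity makes Tonelli's theorem apply automatically, so the iterated computation you carry out is the value of the double integral. The remaining checks are exactly the ones you indicate: that your $y$-antiderivative $\frac{1}{x-y}-\frac{\Phi_A'(x)}{\Phi_A(x)-\Phi_A(y)}$ extends continuously across $y=x$ (it does, with value $-\Phi_A''(x)/(2\Phi_A'(x))$) and that the endpoint limits of $\log|x/\Phi_A(x)|$ are $-\log\Phi_A'(0)$ at $0^-$ and $0$ at $-\infty$. With the decay claim replaced by this nonnegativity-plus-Tonelli observation, the argument is complete, and it is arguably cleaner than the one in the text.
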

\begin{figure}[ht!]
\begin{center}
\includegraphics[width=0.7\textwidth]{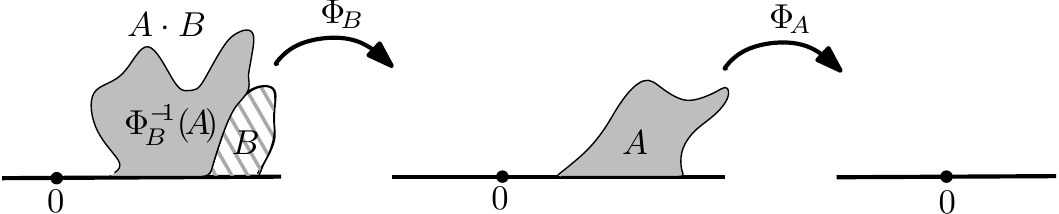}
\end{center}
\caption{\label{fig::phiAcdotB} The set $A\cdot B$ is the union of $\Phi_B^{-1}(A)$ and $B$.}
\end{figure}
\begin{proof}
Denote by $N_A$ the number of excursions in $(e_j, j\in J)$ that intersect $A$, then we see that $\{\Sigma\cap A=\emptyset\}$ is equivalent to $\{N_A=0\}$ where $N_A$ has the law of Poisson distribution with parameter $\pi\beta\mu^{exc}_{\HH,\R_-}[e\cap A\neq\emptyset])$. Thus 
\[\PP[\Sigma\cap A=\emptyset]=\exp(-\pi\beta\mu^{exc}_{\HH,\R_-}[e\cap A\neq\emptyset]).\]
We only need to show that
\[\mu^{exc}_{\HH,\R_-}[e\cap A\neq\emptyset]=-\frac{1}{\pi}\log\Phi'_A(0).\]
This will be obtained by two steps: First, there exists a constant $c$ such that
\begin{equation}\label{eqn::ppp_be_first}\mu^{exc}_{\HH,\R_-}[e\cap A\neq\emptyset]=c\log\Phi'_A(0).\end{equation}
Second,
\begin{equation}\label{eqn::ppp_be_second}c=-1/\pi.\end{equation}
For the first step, we need to introduce a set $A\cdot B$: Suppose $A,B\in\LA_c$ such that $A\cap\R\subset(0,\infty)$ and $B\cap\R\subset(0,\infty)$. Define (see Figure~\ref{fig::phiAcdotB})
\[A\cdot B=\Phi_B^{-1}(A)\cup B.\]
Then clearly, $\Phi_{A\cdot B}=\Phi_A\circ\Phi_B$, and
\begin{equation}\label{eqn::ppp_be_linear1}\log\Phi'_{A\cdot B}(0)=\log\Phi'_A(0)+\log\Phi'_B(0).\end{equation}
For the Brownian excursion measure, we have
\begin{align*}
\mu^{exc}_{\HH,\R_-}[e\cap A\cdot B\neq\emptyset]
&=\mu^{exc}_{\HH,\R_-}[e\cap B\neq\emptyset]+\mu^{exc}_{\HH,\R_-}[e\cap B=\emptyset,e\cap A\cdot B\neq\emptyset]\\
&=\mu^{exc}_{\HH,\R_-}[e\cap B\neq\emptyset]+\mu^{exc}_{\HH\setminus B,\R_-}[e\cap\Phi_B^{-1}(A)\neq\emptyset]\\
&=\mu^{exc}_{\HH,\R_-}[e\cap B\neq\emptyset]+\mu^{exc}_{\HH,\R_-}[e\cap A\neq\emptyset]
\end{align*}
In short, we have
\[\mu^{exc}_{\HH,\R_-}[e\cap A\cdot B\neq\emptyset]=\mu^{exc}_{\HH,\R_-}[e\cap B\neq\emptyset]+\mu^{exc}_{\HH,\R_-}[e\cap A\neq\emptyset].\]
Combining with Equation (\ref{eqn::ppp_be_linear1}), we have Equation (\ref{eqn::ppp_be_first}).\footnote{Idea: $F(t+s)=F(t)+F(s)\rightsquigarrow F(t)=ct$. For precise proof, see \cite[Theorem 8]{WernerConformalRestrictionRelated}.}
Generally, if $I=[a,b]\subset\R_-$, we have
\begin{equation}\label{eqn::ppp_be_first_general}\mu_{\HH,I}^{exc}[e\cap A\neq\emptyset]=c\log(\Phi'_A(a)\Phi'_A(b)).\end{equation}
Next, we will find the constant. Suppose $I=[a,b]\subset\R_-$ and $A\in\LA_c$ such that $A\cap \R\subset (0,\infty)$.
\begin{align*}
\mu^{exc}_{\HH,I}[e\cap A\neq\emptyset]
&=\int_I\int_I\mu_{\HH}(x,y)[e\cap A\neq\emptyset]dxdy\\
&=\int_I\int_IH_{\HH}(x,y)\mu^{\sharp}_{\HH}(x,y)[e\cap A\neq\emptyset]dxdy\\
&=\int_I\int_IH_{\HH}(x,y)(1-\Phi'_{x,y}(x)\Phi'_{x,y}(y))dxdy, \tag{\text{By Corollary~\ref{cor::be_conformalrestriction}}}
\end{align*}
where $\Phi_{x,y}$ is any conformal map from $\HH\setminus A$ onto $\HH$ that fixes $x$ and $y$. Define the Mobius transformation
\[m(z)=\left(\frac{x-y}{\Phi_A(x)-\Phi_A(y)}\right)(z-x)+x,\]
then $\Phi_{x,y}=m\circ\Phi_A$ would do the work. Thus
\[\mu^{exc}_{\HH,I}[e\cap A\neq\emptyset]=\int_I\int_I\frac{1}{\pi|x-y|^2}\left(1-\left(\frac{x-y}{\Phi_A(x)-\Phi_A(y)}\right)^2\Phi_A'(x)\Phi'_A(y)\right)dxdy.\]
It is not clear to see how this double integral would give $c\log(\Phi'_A(a)\Phi'_A(b))$. However, we only need to decide the the constant $c$ which is much easier. Suppose $I=[-\eps,0]$, and set $a_1=\Phi'_A(0)$ and $a_2=\Phi''_A(0)/2$, we have that
\[\mu^{exc}_{\HH,I}[e\cap A\neq\emptyset]=\frac{a_2^2}{\pi a_1^2}\eps^2+o(\eps^2),\quad \log(\Phi'_A(0)\Phi'_A(-\eps))=-\frac{a_2^2}{a_1^2}\eps^2+o(\eps^2).\]
Combining these two expansions, we obtain that the constant $c=-1/\pi$.
\end{proof}
\subsection{Brownian loop}
Suppose $(\gamma(t), 0\le t\le t_{\gamma})\in\LK$ is a loop, i.e. $\gamma(0)=\gamma(t_{\gamma})$. Such a $\gamma$ can be considered as a function defined on $(-\infty,\infty)$ satisfying $\gamma(s)=\gamma(s+t_{\gamma})$ for any $s\in\R$. Let $\tilde{\LK}\subset\LK$ be the collection of such loops. Define, for $r\in\R$, the shift operator $\theta_r$ on loops:
\[\theta_r\gamma(s)=\gamma(r+s).\]
We say that two loops $\gamma,\gamma'$ are equivalent if for some $r$, we have $\gamma'=\theta_r\gamma$. Denote by $\tilde{\LK}_u$ the set of unrooted loops, i.e. the equivalent classes. We will define Brownian loop measure on unrooted loops.

Recall that $\mu(z,\cdot;t)$ denotes the law of complex BM $(B_s,0\le s\le t)$ and
\[\mu(z,\cdot;t)=\int\mu(z,w;t)dA(w).\]
Now we are interested in loops, i.e. $\mu(z,z;t)$ where the path starts from $z$ and returns back to $z$. We have that
\[|\mu(z,z;t)|=\frac{1}{2\pi t},\quad \mu(z,z)=\int_0^{\infty}\mu(z,z;t)dt=\int_0^{\infty}\frac{1}{2\pi t}\mu^{\sharp}(z,z;t)dt.\]
We define \textbf{Brownian loop measure} $\mu^{loop}$ by
\begin{equation}
\mu^{loop}=\int_{\C}\frac{1}{t_{\gamma}}\mu(z,z)dA(z)=\int_{\C}\int_0^{\infty}\frac{1}{2\pi t^2}\mu^{\sharp}(z,z;t)dtdA(z).
\end{equation}
The term $1/t_{\gamma}$ corresponds to averaging over the root and $\mu^{loop}$ is defined on unrooted loops.
If $D$ is a domain, define $\mu_D^{loop}$ to be $\mu^{loop}$ restricted to the curves totally contained in $D$.
\begin{proposition}[Conformal Invariance] If $f:D\to D'$ is a conformal map, then
\[f\circ\mu^{loop}_D=\mu^{loop}_{f(D)}.\]
\end{proposition}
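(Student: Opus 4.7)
The plan is to reduce the identity to a change-of-variables argument. The key observation is that the ``rooted'' Brownian loop measure $\nu^r_D:=\int_D\mu_D(z,z)\,dA(z)$ is not conformally invariant by itself (the pushforward acquires a factor $|(f^{-1})'(w)|^2$), but this extra factor is exactly compensated by the $1/t_\gamma$ weight in $\mu^{loop}$ after averaging over the root.

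I would proceed in three steps. (i) Extend the interior-interior conformal invariance $f\circ\mu_D(z,w)=\mu_{D'}(f(z),f(w))$ to the diagonal $z=w$ (by letting $w\to z$ or by applying Proposition~\ref{prop::bm_conformalinvariance} directly to a BM at $z$). With $w=f(z)$ and $dA(z)=|(f^{-1})'(w)|^2\,dA(w)$, this gives
\[
f\circ\nu^r_D=\int_{D'}\mu_{D'}(w,w)\,|(f^{-1})'(w)|^2\,dA(w).
\]
(ii) Establish a \emph{re-rooting invariance}: the joint measure $dA(z)\otimes\mu(z,z;t)$ on pairs (basepoint, rooted loop of duration $t$) is invariant under $(z,\gamma)\mapsto(\gamma(r),\theta_r\gamma)$. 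This is because translation-invariance of Lebesgue keeps the marginal of $\gamma(r)$ equal to $dA$, and the centered bridge $\gamma-z$ is invariant under time shift on the circle $\R/t\Z$. Disintegrating $\nu^r_D$ over unrooted loop classes, the conditional measure on the fiber of rooted representatives of a class $[\tilde\gamma]$ of duration $s$ is uniform in the root parameter $r\in[0,s)$, with some density $\rho'([\tilde\gamma])$. (iii) Un-root. Set $t(\tilde\gamma):=\int_0^s|(f^{-1})'(\tilde\gamma(u))|^2\,du$; by change of variables along the loop, this depends only on $[\tilde\gamma]$. For $\mu^{loop}_{D'}$, the fiber mass over $[\tilde\gamma]$ equals $(1/s)\cdot s\rho'=\rho'$. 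For $f\circ\mu^{loop}_D$, the $1/t_\gamma$ factor on the pushforward side is $1/t(\tilde\gamma)$, while integrating the $|(f^{-1})'|^2$ weight against the uniform root gives $\int_0^s|(f^{-1})'(\tilde\gamma(r))|^2\,dr=t(\tilde\gamma)$; so the fiber mass is $(1/t(\tilde\gamma))\cdot t(\tilde\gamma)\cdot\rho'=\rho'$. The two fiber masses coincide, hence $f\circ\mu^{loop}_D=\mu^{loop}_{D'}$.

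The main obstacle is step (ii): rigorously disintegrating the infinite $\sigma$-finite measure $\nu^r_D$ over unrooted loop classes and identifying the uniform conditional measure on each fiber. A technically cleaner path is to test the equality against a dense class of bounded measurable functions $F$ on unrooted loops (say, vanishing on loops of duration smaller than any fixed $\epsilon>0$), apply Fubini to interchange the integration over $(z,t)$ with the integration over the rooting, and verify directly that $\int_0^s|(f^{-1})'(\tilde\gamma(r))|^2\,dr=t(\tilde\gamma)$ cancels the $1/t$ weight in $\mu^{loop}_D$ exactly as $\int_0^s\,dr=s$ cancels the $1/s$ weight in $\mu^{loop}_{D'}$.
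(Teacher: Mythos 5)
Your argument is correct and rests on exactly the same underlying fact the paper uses, namely the re-rooting invariance of $\int_{\C}\mu(z,z)\,dA(z)$ (Lebesgue translation-invariance for the basepoint plus shift-invariance of the bridge), but you organize the bookkeeping differently. The paper packages this invariance into a small lemma: call $F$ a \emph{unit weight} if $\int_0^{t_\gamma}F(\theta_r\gamma)\,dr=1$ for every rooted loop $\gamma$; then $\mu^{loop}=\int_{\C}F\,\mu(z,z)\,dA(z)$ for \emph{any} unit weight $F$, not just the canonical $F=1/t_\gamma$. It then chooses the tailored weight $F_f(\gamma)=|f'(\gamma(0))|^2/t_{f\circ\gamma}$, which is a unit weight precisely because $\int_0^{t_\gamma}|f'(\gamma(r))|^2\,dr=t_{f\circ\gamma}$, and with this choice the Jacobian from $dA(z)\to dA(w)$ and the $1/t_{f\circ\gamma}\to 1/t_\eta$ replacement both appear automatically, so the pushforward computation is a one-line change of variables with no disintegration. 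You instead keep the default $1/t_\gamma$ weight, push the diagonal measure forward (correctly picking up $|(f^{-1})'(w)|^2$), and then compare fiber masses over unrooted loop classes, using the same identity $\int_0^s|(f^{-1})'(\tilde\gamma(r))|^2\,dr=t(\tilde\gamma)$ to see the $1/t$-weights cancel. This is the same cancellation, merely verified after disintegrating rather than before. You are right that the disintegration of a $\sigma$-finite measure over unrooted equivalence classes is the delicate point; the paper's unit-weight formulation sidesteps it (the burden is moved into the unproved claim $\mu^{loop}=\int_{\C}F\mu(z,z)\,dA$ for any unit weight, which is itself proved by the re-rooting invariance), and the Fubini-against-test-functions fallback you sketch at the end is in effect a by-hand version of that lemma. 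No gap in the conclusion; the paper's write-up buys economy by choosing a clever weight, yours makes the rooting cancellation explicit at the cost of some measure-theoretic overhead.
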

\begin{proof}
We call a Borel measurable function $F:\tilde{\LK}\to [0,\infty)$ a unit weight if, for any $\gamma\in\tilde{\LK}$, we have
\[\int_0^{t_{\gamma}}F(\theta_r\gamma)=1.\]
One example is $F(\gamma)=1/t_{\gamma}$. For any unit weight $F$, since $\mu^{loop}$ is defined on unrooted loops, we have that
\begin{equation}\label{eqn::loopmeasure_unitweight}
\mu^{loop}=\int_{\C}F\mu(z,z)dA(z).
\end{equation}
Define a function $F_f$ on $\tilde{\LK}$ in the following way:  for any $\gamma\in\tilde{\LK}$,
\[F_f(\gamma)=|f'(\gamma(0))|^2/t_{f\circ\gamma}.\]
Recall the time change in Equation (\ref{eqn::bm_timechange}), we can see that $F_f$ is a unit weight:
\[\int_0^{t_{\gamma}}F_f(\theta_r\gamma)dr=\int_0^{t_{\gamma}}|f'(\gamma(r))|^2/t_{f\circ\gamma}dr=1.\]
Thus,
\[\mu^{loop}_D=\int_D F_f\mu_D(z,z)dA(z)=\int_D \frac{1}{t_{f\circ\gamma}}|f'(z)|^2\mu_D(z,z)dA(z).\]
Therefore, 
\begin{align*}
f\circ\mu_D^{loop}&=
\int_D\frac{1}{t_{f\circ\gamma}}|f'(z)|^2 f\circ\mu_D(z,z)dA(z)\\
&=\int_D\frac{1}{t_{f\circ\gamma}}|f'(z)|^2 \mu_{f(D)}(f(z),f(z))dA(z)\\
&=\int_{f(D)}\frac{1}{t_{\eta}}\mu_{f(D)}(w,w)dA(w)=\mu^{loop}_{f(D)}.
\end{align*}
\end{proof}

\begin{theorem}\label{thm::ppp_bl_restriction}
Denote by $\mu^{loop}_{\U,0}$ the measure $\mu^{loop}_{\U}$ restricted to the loops surrounding the origin.
Let $(l_j,j\in J)$ be a Poisson point process with intensity $\alpha\mu^{loop}_{\U,0}$ for some $\alpha>0$. Set $\Sigma=\cup_jl_j$. For any closed subset $A\subset\overline{\U}$ such that $0\not\in A$, $\U\setminus A$ is simply connected, we have that
\[\PP[\Sigma\cap A=\emptyset]=\Phi_A'(0)^{-\alpha},\]
where $\Phi_A$ is the conformal map from $\U\setminus A$ onto $\U$ with $\Phi_A(0)=0, \Phi_A'(0)>0$.
\end{theorem}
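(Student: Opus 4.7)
The strategy parallels that of Theorem~\ref{thm::ppp_be_restriction}. Since $(l_j,j\in J)$ is a Poisson point process with intensity $\alpha\mu^{loop}_{\U,0}$, the number of loops hitting $A$ is Poisson with parameter $\alpha\,\mu^{loop}_{\U,0}[\ell\cap A\neq\emptyset]$, giving
\[\PP[\Sigma\cap A=\emptyset]=\exp\bigl(-\alpha\,\mu^{loop}_{\U,0}[\ell\cap A\neq\emptyset]\bigr).\]
The target thus reduces to showing $\mu^{loop}_{\U,0}[\ell\cap A\neq\emptyset]=\log\Phi'_A(0)$.

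\emph{Step 1 (additivity).} For admissible sets $A,B$ (closed, $0\notin A\cup B$, with $\U\setminus A,\U\setminus B$ simply connected), set $A\cdot B:=\Phi_B^{-1}(A)\cup B$. Then $A\cdot B$ is again admissible, and by uniqueness of the conformal map sending $0\mapsto 0$ with positive derivative, $\Phi_{A\cdot B}=\Phi_A\circ\Phi_B$, so $\log\Phi'_{A\cdot B}(0)=\log\Phi'_A(0)+\log\Phi'_B(0)$. Splitting on whether $\ell$ meets $B$,
\begin{align*}
\mu^{loop}_{\U,0}[\ell\cap A\cdot B\neq\emptyset]
&=\mu^{loop}_{\U,0}[\ell\cap B\neq\emptyset]+\mu^{loop}_{\U\setminus B,0}[\ell\cap\Phi_B^{-1}(A)\neq\emptyset]\\
&=\mu^{loop}_{\U,0}[\ell\cap B\neq\emptyset]+\mu^{loop}_{\U,0}[\ell\cap A\neq\emptyset],
\end{align*}
where the second line is conformal invariance of $\mu^{loop}$ under $\Phi_B:\U\setminus B\to\U$ combined with the fact that an orientation-preserving homeomorphism fixing $0$ preserves the topological property of surrounding $0$. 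Exactly as in Theorem~\ref{thm::ppp_be_restriction} (cf.~\cite[Theorem~8]{WernerConformalRestrictionRelated}), additivity yields a universal constant $c\geq 0$ with
\[\mu^{loop}_{\U,0}[\ell\cap A\neq\emptyset]=c\log\Phi'_A(0).\]

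\emph{Step 2 (determining $c=1$).} Specialize to $A_\lambda:=\overline{\U}\setminus\U(0,1/\lambda)$ for $\lambda>1$; here $\Phi_{A_\lambda}(z)=\lambda z$, so $\log\Phi'_{A_\lambda}(0)=\log\lambda$. Introduce the regularization
\[F(\delta):=\mu^{loop}_\U\bigl[\ell\text{ surrounds }0,\ \ell\cap\U(0,\delta)=\emptyset\bigr],\qquad\delta\in(0,1),\]
which is finite. Scaling by $z\mapsto\lambda z$ gives $\mu^{loop}_{\U(0,1/\lambda)}[\ell\text{ surrounds }0,\ \ell\cap\U(0,\delta)=\emptyset]=F(\lambda\delta)$, hence
\[F(\delta)-F(\lambda\delta)=\mu^{loop}_\U\bigl[\ell\text{ surrounds }0,\ \ell\cap\U(0,\delta)=\emptyset,\ \ell\cap A_\lambda\neq\emptyset\bigr].\]
By monotone convergence the right-hand side tends to $\mu^{loop}_{\U,0}[\ell\cap A_\lambda\neq\emptyset]=c\log\lambda$ as $\delta\to 0$. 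Combining this with the classical asymptotic $F(\delta)=\log(1/\delta)+O(1)$ as $\delta\to 0$ (Brownian loop measure of essential loops in a thin annulus, see~\cite{LawlerWernerBrownianLoopsoup}) yields $F(\delta)-F(\lambda\delta)\to\log\lambda$, forcing $c=1$.

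The main obstacle is the coefficient $1$ in the asymptotic $F(\delta)\sim\log(1/\delta)$: Step~1 alone pins $c$ down only up to a positive multiplicative constant, and a separate explicit computation of the Brownian loop mass on essential loops in an annulus of large modulus is required to fix $c=1$.
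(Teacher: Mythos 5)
Your two-step reduction (Poisson zero-count, then showing $\mu^{loop}_{\U,0}[\ell\cap A\neq\emptyset]=\log\Phi'_A(0)$, split into ``the mass is $c\log\Phi'_A(0)$'' and ``$c=1$'') is exactly the skeleton of the paper's proof, and your Step~1 is the same additivity-under-composition argument via conformal invariance of $\mu^{loop}$ that the paper imports from its Theorem~\ref{thm::ppp_be_restriction} and from \cite{WernerSelfavoidingLoop}.

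Where you add genuine content is Step~2: the paper simply declares the computation of $c$ ``more complicated'' and points to the references, while you give a concrete reduction by taking $A_\lambda=\overline\U\setminus\U(0,1/\lambda)$ (so $\Phi_{A_\lambda}(z)=\lambda z$), introducing the regularized quantity $F(\delta)=\mu^{loop}_\U[\ell\text{ surrounds }0,\ \ell\cap\U(0,\delta)=\emptyset]$, and using scale invariance plus monotone convergence to obtain $\lim_{\delta\to 0}\bigl(F(\delta)-F(\lambda\delta)\bigr)=c\log\lambda$. One small remark: the $O(1)$ bound you quote does not by itself give $F(\delta)-F(\lambda\delta)\to\log\lambda$ (the two $O(1)$'s need not cancel); but it \emph{is} enough when combined with the limit you already have from Step~1, since telescoping $g(\delta)-g(\lambda^n\delta)=\sum_{k=0}^{n-1}\bigl(g(\lambda^k\delta)-g(\lambda^{k+1}\delta)\bigr)$ for $g(\delta)=F(\delta)+\log\delta$ shows that $c\neq 1$ would force $g$ to be unbounded, contradicting $F(\delta)=\log(1/\delta)+O(1)$. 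Of course, as you honestly flag, the identity $F(\delta)=\log(1/\delta)+O(1)$ with leading coefficient exactly $1$ is essentially equivalent to $c=1$ and is itself a nontrivial fact from the Brownian loop-measure literature; so your proof, like the paper's, ultimately outsources the hard constant computation to \cite{WernerSelfavoidingLoop, SheffieldWernerCLE, LawlerWernerBrownianLoopsoup}, but it pins the outsourced ingredient down to a single sharp asymptotic, which is a useful clarification of the structure of the argument.
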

\begin{proof}
Since
\[\PP[\Sigma\cap A=\emptyset]=\exp(-\alpha\mu^{loop}_{\U,0}[l\cap A\neq\emptyset]),\]
we only need to show that
\[\mu^{loop}_{\U,0}[l\cap A\neq\emptyset]=\log\Phi_A'(0).\]
Similar as in the proof of Theorem \ref{thm::ppp_be_restriction}, this can be obtained by two steps: First, there exists a constant $c$ such that
\begin{equation}\label{eqn::ppp_bl_first}\mu^{loop}_{\U,0}[l\cap A\neq\emptyset]=c\log\Phi'_A(0).\end{equation}
Second,
\begin{equation}\label{eqn::ppp_bl_second}c=1.\end{equation}
For the first step, it can be proved in the similar way as the proof of the first step of Theorem~\ref{thm::ppp_be_restriction}, and the precise proof can be found in \cite[Lemma 4]{WernerSelfavoidingLoop}. But for the second step, it is more complicate. We omit this part and the interested readers can consult \cite{WernerSelfavoidingLoop, SheffieldWernerCLE, LawlerWernerBrownianLoopsoup}.
\end{proof}

\newpage
\section{Chordal SLE}\label{sec::chordal_sle}
\subsection{Introduction}
Schramm Lowner Evolution (SLE for short) was introduced by Oded Schramm in 1999 \cite{SchrammFirstSLE} as the candidates of the scaling limits of discrete statistical physics models. We will take percolation as an example. Suppose $D$ is a domain and we have a discrete lattice of size $\eps$ inside $D$, say the triangular lattice $\eps\T\cap D$. The critical percolation on the discrete lattice is the following: At each vertex of the lattice, there is a random variable which is black or white with equal probability $1/2$. All these random variables are independent. We can see that there are interfaces separating black vertices from white vertices. To be precise, let us fix two distinct boundary points $a,b\in\partial D$. Denote by $\partial_L$ (resp. $\partial_R$) the part of the boundary from $a$ to $b$ clockwise (resp. counterclockwise). We fix all vertices on $\partial_L$ (resp. $\partial_R$) to be white (resp. black). And then sample independent black/white random variables at the vertices inside $D$. Then there exists a unique interface from $a$ to $b$ separating black vertices from white vertices (see Figure~\ref{fig::percolation}). We denote this interface by $\gamma^{\eps}$, and call it the critical percolation interface in $D$ from $a$ to $b$.
\begin{figure}[ht!]
\begin{center}
\includegraphics[width=0.5\textwidth]{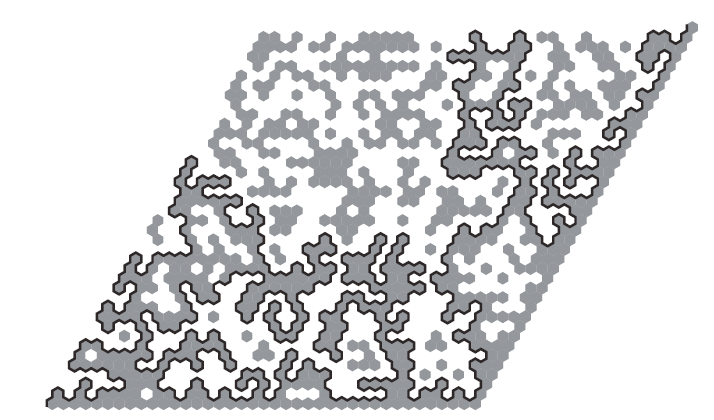}
\end{center}
\caption{\label{fig::percolation}There exists a unique interface from the left-bottom corner to right-top corner separating black vertices from white vertices. (Picture by Julien Dub\'edat, from \cite{WernerLecturePercolation})}
\end{figure}

It is worthwhile to point out the domain Markov property in this discrete model: Starting from $a$, we move along $\gamma^{\eps}$ and stopped at some point $\gamma^{\eps}(n)$. Given $L=(\gamma^{\eps}(1),...,\gamma^{\eps}(n))$, the future part of $\gamma^{\eps}$ has the same law as the critical percolation interface in $D\setminus L$ from $\gamma^{\eps}(n)$ to $b$.

People believe that the discrete interface $\gamma^{\eps}$ will converge to some continuous path in $D$ from $a$ to $b$ as $\eps$ goes to zero. Assume this is true and suppose $\gamma$ is the limit continuous curve in $D$ from $a$ to $b$. Then we would expect that the limit should satisfies the following two properties: Conformal Invariance and Domain Markov Property which is the continuous analog of discrete domain Markov property. SLE curves are introduced from this motivation: chordal SLE curves are random curves in simply connected domains connecting two boundary points such that they satisfy: (see Figure~\ref{fig::SLE_conf_inv_Markov})
\begin{itemize}
\item \textbf{Conformal Invariance}: $\gamma$ is an SLE curve in $D$ from $a$ to $b$, $\varphi$ is a conformal map, then $\varphi(\gamma)$ has the same law as an SLE curve in $\varphi(D)$ from $\varphi(a)$ to $\varphi(b)$.
\item \textbf{Domain Markov Property}: $\gamma$ is an SLE curve in $D$ from $a$ to $b$, given $\gamma([0,t])$, $\gamma([t,\infty))$ has the same law as an SLE curve in $D\setminus \gamma[0,t]$ from $\gamma(t)$ to $b$.
\end{itemize}
\begin{figure}[ht!]
\begin{subfigure}[b]{0.48\textwidth}
\begin{center}
\includegraphics[width=\textwidth]{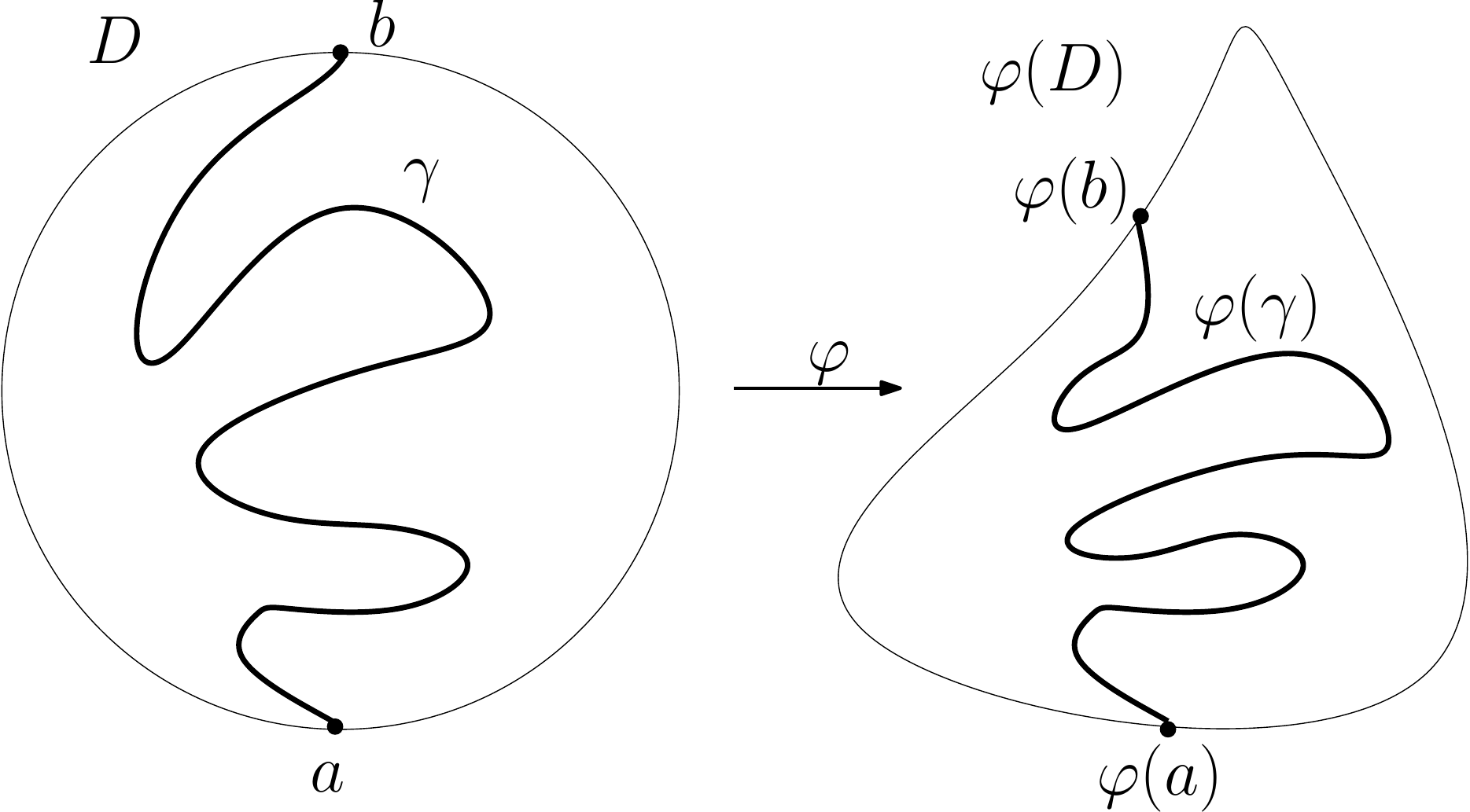}
\end{center}
\caption{Conformal Invariance.}
\end{subfigure}
$\quad$
\begin{subfigure}[b]{0.48\textwidth}
\begin{center}\includegraphics[width=\textwidth]{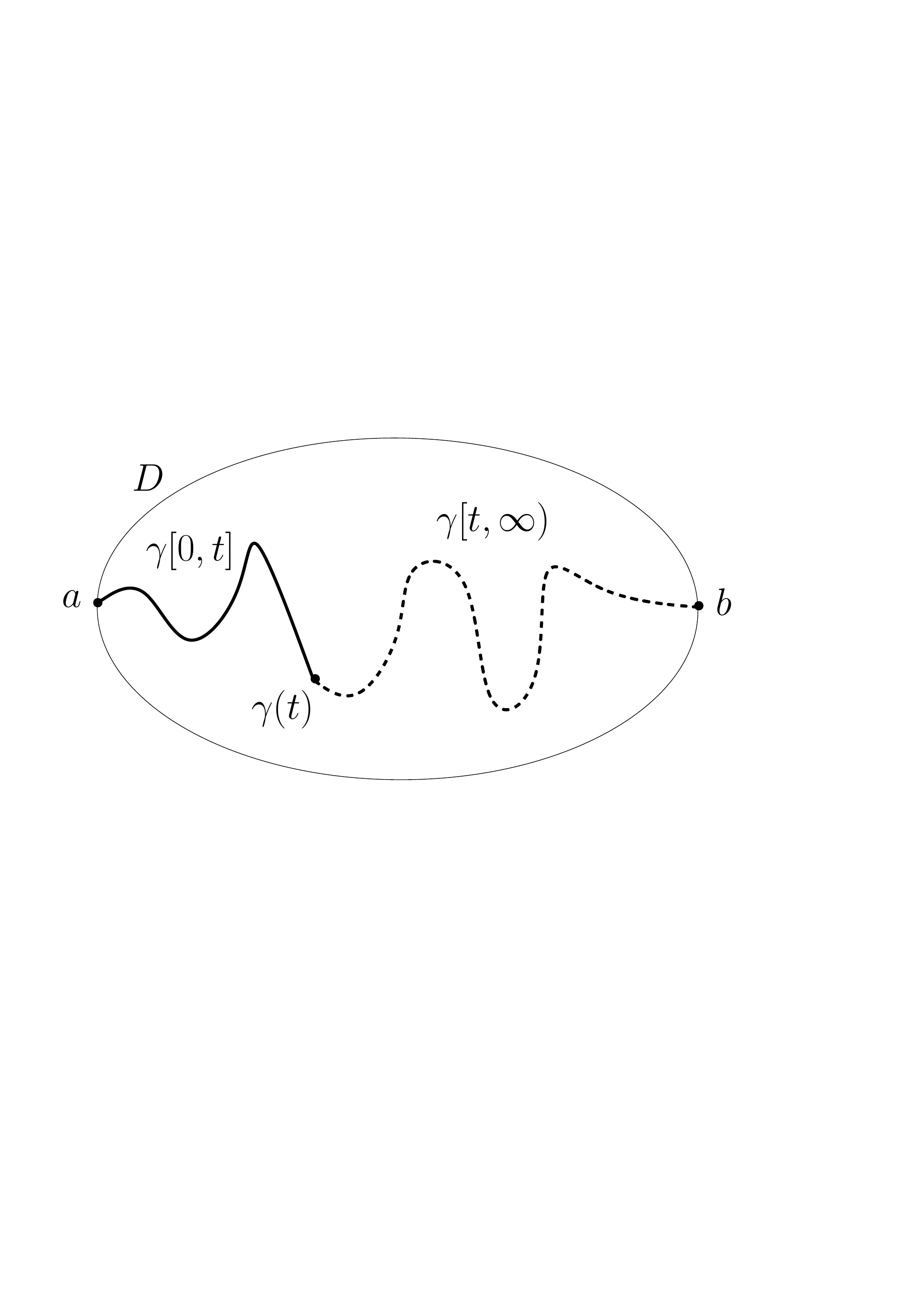}\end{center}
\caption{Domain Markov Property.}
\end{subfigure}
\caption{\label{fig::SLE_conf_inv_Markov}Characterization of SLE.}
\end{figure}

The following of section is organized as follows: In Subsection~\ref{subsec::loewnerchain}, we introduce one time parameterization of continuous curves, called Loewner chain, that is suitable to describe the domain Markov property of the curves. In Subsection~\ref{subsec::chordalsle}, we introduce the definition of chordal SLE and discuss its basic properties.
Without loss of generality, we choose to work in the upper half-plane $\HH$ and suppose the two boundary points are $0$ and $\infty$.
\subsection{Loewner chain}\label{subsec::loewnerchain}
\noindent\textbf{Half-plane capacity}
\medbreak
We call a compact subset $K$ of $\overline{\HH}$ a hull if $H=\HH\setminus K$ is simply connected. Riemann's mapping theorem asserts that there exists a conformal map $\Psi$ from $H$ onto $\HH$ that $\Psi(\infty)=\infty$. In fact, if $\Psi$ is such a map, then $c\Psi+c'$ for $c>0,c'\in\R$ is also a map from $H$ onto $\infty$ fixing $\infty$. We choose to fix the two-degree freedom in the following way. The map $\Psi$ can be expanded near $\infty$: there exist $b_1,b_0,b_{-1},...$
\[\Psi(z)=b_1z+b_0+\frac{b_{-1}}{z}+\cdots+\frac{b_{-n}}{z^n}+o(z^{-n}), \quad \text{as }z\to\infty.\]
Furthermore, since $\Psi$ preserves the real axis near $\infty$, all coefficients $b_j$ are real. Hence, for each $K$, there exists a unique conformal map $\Psi$ from $H=\HH\setminus K$ onto $\HH$ such that
\[\Psi(z)=z+0+O(1/z),\quad\text{as }z\to\infty.\]
We call such a conformal map the conformal map from $H=\HH\setminus K$ onto $\HH$ normalized at $\infty$, and denote it by $\Psi_K$.
In particular, there exists a real $a=a(K)$ such that
\[\Psi(z)=z+2a/z+o(1/z),\quad\text{as }z\to\infty.\]
We also denote $a(K)$ by $a(\Psi_{K})$. This number $a(K)$ can be viewed as the size of $K$:
\begin{lemma}
The quantity $a(K)$ is a non-negative increasing function of the set $K$.
\end{lemma}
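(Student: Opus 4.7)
The plan is to treat the two assertions separately, using the probabilistic interpretation of harmonic functions via Brownian motion for non-negativity and a composition trick for monotonicity.

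For non-negativity, I would observe that $\Psi_K(z)-z$ is holomorphic in $H=\HH\setminus K$, so $h(z):=\Im(\Psi_K(z)-z)$ is harmonic and bounded in $H$. On the boundary $\partial H$, I split into two pieces: on $\partial H\cap\R$ both $\Psi_K$ and the identity take real values, so $h\equiv 0$; on $\partial H\cap K$ we have $\Psi_K$ mapping to $\R$, so $h(z)=-\Im(z)\le 0$. By Proposition~\ref{prop::bm_conformalinvariance} (or the standard harmonic-measure representation) and optional stopping applied to a complex Brownian motion $B$ started at $z\in H$ and stopped at $\tau_H$, I get
\[\Im(\Psi_K(z))-\Im(z)=h(z)=-\E^z[\Im(B_{\tau_H})]\le 0.\]
Now take $z=iy$ with $y\to\infty$. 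The expansion $\Psi_K(iy)=iy+2a(K)/(iy)+o(1/y)$ gives $h(iy)=-2a(K)/y+o(1/y)$, so
\[2a(K)=\lim_{y\to\infty}y\,\E^{iy}[\Im(B_{\tau_H})]\ge 0.\]

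For monotonicity, suppose $K_1\subset K_2$ are hulls and write $\Psi_j=\Psi_{K_j}$, $a_j=a(K_j)$. The set $K':=\overline{\Psi_1(K_2\setminus K_1)\cap\HH}$ is a compact subset of $\overline{\HH}$ whose complement in $\HH$ is simply connected (since $\Psi_1$ is a conformal homeomorphism of $\HH\setminus K_1$ onto $\HH$ that extends continuously to the real axis outside a bounded set, and the complement of $K_2$ in $\HH\setminus K_1$ is simply connected). Consider the map
\[\Phi:=\Psi_2\circ\Psi_1^{-1}:\HH\setminus K'\longrightarrow\HH.\]
This is conformal and, since both $\Psi_1$ and $\Psi_2$ are normalized at infinity, $\Phi$ is as well. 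Inverting the expansion of $\Psi_1$ yields $\Psi_1^{-1}(z)=z-2a_1/z+o(1/z)$, and substituting into $\Psi_2$ gives
\[\Phi(z)=z+\frac{2(a_2-a_1)}{z}+o(1/z),\qquad z\to\infty,\]
so $a(K')=a_2-a_1$. Applying the non-negativity result to $K'$ finishes the proof: $a_2-a_1=a(K')\ge 0$.

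The most delicate point in the plan is the monotonicity step: one must check that $K'$ is genuinely a hull in the sense of the paper, which requires a careful argument about continuous extension of $\Psi_1$ to the portions of $\R$ lying outside the closure of $K_1$, and the simple-connectedness of the resulting complement. Modulo this topological verification, everything else is just reading off coefficients and applying the Brownian representation of bounded harmonic functions. An alternative, avoiding the topological issue, would be to apply the probabilistic argument of step one directly to $\Phi$ only after observing that $\Phi$ extends conformally to a normalized map of the upper half-plane minus a compact set; the Brownian exit argument then still yields $a_2-a_1\ge 0$ because the exit from $\HH\setminus K'$ under $\Phi$ has non-negative imaginary part.
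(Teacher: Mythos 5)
Your proof is correct and takes essentially the same route as the paper: non-negativity via optional stopping for the bounded harmonic function $\Im(\Psi_K(z)-z)$ and the expansion at $\infty$, and monotonicity via factoring $\Psi_{K_2}$ through $\Psi_{K_1}$ and reading off the $1/z$ coefficient. The extra care you take with the boundary decomposition of $h$ and the flagging of the topological check that $K'$ is a hull are refinements, not departures.
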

\begin{proof}
We first show that $a$ is non-negative. Suppose that $Z=X+iY$ is a complex BM starting from $Z_0=iy$ for some $y>0$ large (so that $iy\in H=\HH\setminus K$) and stopped at its first exit time $\tau$ from $H$. Let $\Psi$ be the conformal map from $H$ onto $\HH$ normalized at the infinity, then $\Im(\Psi(z)-z)$ is a bounded harmonic function in $H$. The martingale stopping theorem therefore shows that
\[\E[\Im(\Psi(Z_{\tau}))-Y_{\tau}]=\Im(\Psi(iy)-iy)=-\frac{2a}{y}+o(\frac{1}{y}),\quad \text{as }y\to\infty.\]
Since $\Psi(Z_{\tau})$ is real, we have that
\[2a=\lim_{y\to\infty}y\E[Y_{\tau}]\ge 0.\]
Next we show that $a$ is increasing. Suppose $K,K'$ are hulls and $K\subset K'$. Let $\Psi_1=\Psi_K$, and let $\Psi_2$ be the conformal map from $\HH\setminus\Psi_K(K'\setminus K)$ onto $\HH$ normalized at infinity. Then $\Psi_{K'}=\Psi_2\circ\Psi_1$, and
\[a(K')=a(K)+a(\Psi_2)\ge a(K).\]
\end{proof}
We call $a(K)$ the capacity of $K$ in $\HH$ seen from $\infty$ or \textbf{half-plane capacity}. 
Here are several simple facts:
\begin{itemize}
\item When $K$ is vertical slit $[0,iy]$, we have $\Psi_K(z)=\sqrt{z^2+y^2}$. In particular, $a(K)=y^2/4$.
\item If $\lambda>0$, then $a(\lambda K)=\lambda^2 a(K)$
\end{itemize}
\medbreak
\noindent\textbf{Loewner chain}
\medbreak
Suppose that $(W_t,t\ge 0)$ is a continuous real function with $W_0=0$. For each $z\in\overline{\HH}$, define the function $g_t(z)$ as the solution to the ODE
\[\partial_t g_t(z)=\frac{2}{g_t(z)-W_t},\quad g_0(z)=z.\]
This is well-defined as long as $g_t(z)-W_t$ does not hit 0. Define
\[T(z)=\sup\{t>0: \min_{s\in[0,t]}|g_s(z)-W_s|>0\}.\]
This is the largest time up to which $g_t(z)$ is well-defined. Set
\[K_t=\{z\in\overline{\HH}: T(z)\le t\},\quad H_t=\HH\setminus K_t.\]
We can check that $g_t$ is a conformal map from $H_t$ onto $\HH$ normalized at $\infty$. For each $t$, we have
\[g_t(z)=z+2t/z+o(1/z),\quad \text{as } z\to\infty.\]
In other words, $a(K_t)=t$.
The family $(K_t,t\ge 0)$ is called the \textbf{Loewner chain} driven by $(W_t,t\ge 0)$.
\subsection{Chordal SLE}\label{subsec::chordalsle}
\noindent\textbf{Definition}
\medbreak
\noindent Chordal SLE$_{\kappa}$ for $\kappa\ge 0$ is the Loewner chain driven by $W_t=\sqrt{\kappa}B_t$ where $B$ is a 1-dimensional BM with $B_0=0$. 
\begin{lemma}  Chordal SLE$_{\kappa}$ is scale-invariant. 
\end{lemma}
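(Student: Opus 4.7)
The plan is to show that rescaling by $\lambda>0$ and simultaneously rescaling time by $\lambda^2$ sends a Loewner chain driven by $(W_t)$ to one driven by $(\lambda W_{t/\lambda^2})$, and then invoke Brownian scaling of the driving function $\sqrt{\kappa} B$.

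First I would set up the Loewner-chain calculation at a deterministic level. Let $(g_t)$ be the conformal maps associated to a driving function $(W_t)$, solving
\[
\partial_t g_t(z)=\frac{2}{g_t(z)-W_t},\quad g_0(z)=z.
\]
For a fixed $\lambda>0$, define
\[
\tilde g_s(z):=\lambda\, g_{s/\lambda^2}(z/\lambda),\qquad \tilde W_s:=\lambda W_{s/\lambda^2}.
\]
Then $\tilde g_0(z)=z$, and differentiating in $s$ gives
\[
\partial_s\tilde g_s(z)=\frac{1}{\lambda}\,\frac{2}{g_{s/\lambda^2}(z/\lambda)-W_{s/\lambda^2}}=\frac{2}{\tilde g_s(z)-\tilde W_s},
\]
so $(\tilde g_s)$ is the Loewner chain driven by $(\tilde W_s)$. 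The associated hulls are $\tilde K_s=\lambda K_{s/\lambda^2}$; consistency with the half-plane capacity parameterization is automatic since $a(\lambda K)=\lambda^2 a(K)$, so $a(\tilde K_s)=\lambda^2\cdot(s/\lambda^2)=s$.

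Next I would specialize to chordal SLE$_\kappa$, where $W_t=\sqrt{\kappa}\,B_t$. Then
\[
\tilde W_s=\lambda\sqrt{\kappa}\,B_{s/\lambda^2}=\sqrt{\kappa}\,\bigl(\lambda B_{s/\lambda^2}\bigr),
\]
and Brownian scaling gives $(\lambda B_{s/\lambda^2},\,s\ge 0)\overset{d}{=}(B_s,\,s\ge 0)$. Hence $(\tilde W_s)\overset{d}{=}(W_s)$ as continuous processes, and since the Loewner chain is a deterministic functional of its driving function, $(\tilde K_s)\overset{d}{=}(K_s)$. In other words, $(\lambda K_{s/\lambda^2},\,s\ge 0)$ has the same law as $(K_s,\,s\ge 0)$, which is the statement of scale invariance.

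There is no real obstacle; the only subtlety is the bookkeeping between the spatial rescaling by $\lambda$ and the capacity/time rescaling by $\lambda^2$, which is forced by $a(\lambda K)=\lambda^2 a(K)$. Everything else is a one-line Brownian scaling argument applied on top of the general Loewner-chain scaling identity.
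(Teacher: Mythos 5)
Your proof is correct and is essentially the same argument as the paper's: define the spatially and temporally rescaled Loewner maps, verify they satisfy the Loewner ODE driven by the rescaled driving function, and then invoke Brownian scaling of $\sqrt{\kappa}\,B$. The only difference is cosmetic — the paper writes the scaling parameter as $\sqrt{\lambda}$ in space and $\lambda$ in time, while you write $\lambda$ in space and $\lambda^2$ in time.
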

\begin{proof}
Since $W$ is scale-invariant, i.e. for any $\lambda>0$, the process $W^{\lambda}_t=W_{\lambda t}/\sqrt{\lambda}$ has the same law as $W$. Set $g^{\lambda}_t(z)=g_{\lambda t}(\sqrt{\lambda}z)/\sqrt{\lambda}$, we have
\[\partial_t g^{\lambda}_t(z)=\frac{2}{g^{\lambda}_t-W^{\lambda}_t},\quad g^{\lambda}_0(z)=z.\]
Thus $(K_{\lambda t}/\sqrt{\lambda},t\ge 0)$ has the same law as $K$.
\end{proof}
For general simply connected domain $D$ with two boundary points $x$ and $y$, we define SLE$_\kappa$ in $D$ from $x$ to $y$ to be the image of chordal SLE$_{\kappa}$ in $\HH$ from $0$ to $\infty$ under any conformal map from $\HH$ to $D$ sending the pair $0,\infty$ to $x,y$. Since SLE$_{\kappa}$ is scale-invariant, the SLE in $D$ from $x$ to $y$ is well-defined. 
\begin{lemma} Chordal SLE$_{\kappa}$ satisfies domain Markov property.
Moreover, the law of SLE$_{\kappa}$ is symmetric with respect to the imaginary axis.
\end{lemma}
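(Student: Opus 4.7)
The plan is to deduce both statements from the scaling and independence of increments of the driving Brownian motion, together with the fact that the Loewner chain is fully determined by its driving function.

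For the domain Markov property I would fix $t\ge 0$ and work at the level of the Loewner chain $(K_s,s\ge 0)$ rather than the (a priori hypothetical) trace. Let $\mathcal{F}_t$ denote the natural filtration of $W=\sqrt{\kappa}B$; by the strong Markov property of Brownian motion, $\tilde W_s:=W_{t+s}-W_t$ is again $\sqrt{\kappa}$ times a standard Brownian motion starting from $0$, and is independent of $\mathcal{F}_t$. I would then introduce
\[
\tilde g_s(z):=g_{t+s}\bigl(g_t^{-1}(z+W_t)\bigr)-W_t,\qquad z\in\HH,\quad s\ge 0,
\]
and verify by differentiating in $s$ and using the chain rule that $\tilde g_s$ satisfies the Loewner ODE $\partial_s\tilde g_s(z)=2/(\tilde g_s(z)-\tilde W_s)$ with $\tilde g_0=\mathrm{id}$. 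Hence $(\tilde K_s,s\ge 0)$, the hulls generated by $\tilde g_s$, form a chordal SLE$_\kappa$ in $\HH$ from $0$ to $\infty$ that is independent of $\mathcal{F}_t$. The map $z\mapsto g_t^{-1}(z+W_t)$ is the conformal map from $\HH$ onto $H_t=\HH\setminus K_t$ normalized to send $0,\infty$ to the tip and $\infty$; its image of $\tilde K_s$ is exactly $K_{t+s}\setminus K_t$. Combined with the conformal-invariance definition of SLE$_\kappa$ in a general simply connected domain, this is precisely the domain Markov property.

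For the reflection symmetry, I would use that $-B$ has the same law as $B$, hence $-W$ has the same law as $W$. The point is to show that if $(g_t)$ is the Loewner chain driven by $W_t$, then
\[
\tilde g_t(z):=-\overline{g_t(-\bar z)}
\]
is a conformal map (from $-\overline{H_t}$ onto $\HH$, normalized at infinity) solving the Loewner equation driven by $-W_t$. Indeed, since $W_t\in\R$,
\[
\partial_t\tilde g_t(z)=-\overline{\partial_t g_t(-\bar z)}=-\overline{\frac{2}{g_t(-\bar z)-W_t}}=\frac{2}{-\overline{g_t(-\bar z)}-(-W_t)}=\frac{2}{\tilde g_t(z)-(-W_t)}.
\]
Therefore the hulls satisfy $\tilde K_t=-\overline{K_t}$, and equality in law of $W$ and $-W$ upgrades to equality in law of $(K_t)$ and $(-\overline{K_t})$, which is the claimed symmetry about the imaginary axis.

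The main obstacle is a bookkeeping one, not a deep one: one needs to be careful that the shifted Loewner chain really is normalized so that $\tilde g_s(z)=z+2s/z+o(1/z)$, so the identification $\tilde K_s=\Phi_t^{-1}(K_{t+s}\setminus K_t)$ (with $\Phi_t(\cdot)=g_t(\cdot)-W_t$) is clean and the uniqueness of the conformal map in Definition of SLE in a general domain applies. Once that normalization is checked, both statements follow purely from properties of Brownian motion and the uniqueness of solutions to the Loewner ODE; no fine properties of the SLE trace (continuity, generation by a curve, etc.) are needed here.
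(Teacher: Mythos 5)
Your proposal is correct and follows essentially the same route as the paper: for domain Markov you shift the driving function and verify that the recentered, conjugated Loewner maps solve the Loewner ODE with the shifted driver, exactly as the paper does via $(g_T(K_{t+T}\setminus K_T)-W_T)_{t\ge 0}$; for the symmetry you reflect and verify the driver becomes $-W$, as the paper does. One small mismatch: the paper states the domain Markov property at an arbitrary \emph{stopping time} $T$ (invoking the strong Markov property of $B$), whereas you fix a deterministic $t$; your computation of $\tilde g_s(z)=g_{t+s}(g_t^{-1}(z+W_t))-W_t$ goes through verbatim with $t$ replaced by a stopping time $T$, and that is the version one actually needs when exploring the curve, so you should state it at a stopping time to match what ``domain Markov property'' means here.
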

\begin{proof}
Proof of domain Markov property: Since BM is a strong Markov process with independent increments, for any stopping time $T$, the process
$(g_{T}(K_{t+T}\setminus K_T)-W_T,t\ge 0)$ is independent of $(K_s,0\le s \le T)$ and has the same law as $K$. Thus, for any stopping time $T$ and given $K_T$, the conditional law of $(K_{t+T}, t\ge 0)$ is the same as an SLE in $\HH\setminus K_T$.

Proof of symmetry: Suppose that $K$ is a Loewner chain driven by $W$. Let $\tilde{K}$ be the image of $K$ under the reflection with respect to the imaginary axis. Define $(\tilde{g}_t)_{t\ge 0}$ to be the corresponding sequence of conformal maps for $\tilde{K}$. Then we could check that $\tilde{K}$ is a Loewner chain driven by $-W$. Since $-W$ has the same as $W$, we know that $\tilde{K}$ has the same law as $K$. This implies that the law of SLE$_{\kappa}$ is symmetric with respect to the imaginary axis.
\end{proof}
\begin{proposition}
For all $\kappa\in [0,4]$, chordal SLE$_{\kappa}$ is almost surely a simple continuous curve, i.e. there exists a simple continuous curve $\gamma$ such that
$K_t=\gamma[0,t]$ for all $t\ge 0$. See Figure \ref{fig::sle_curve_explanation}. Moreover, almost surely, we have $\lim_{t\to\infty}\gamma(t)=\infty$.
\end{proposition}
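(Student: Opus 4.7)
\noindent\textit{Proof proposal.} The plan is to split the statement into two pieces: (a) the hulls $(K_t)$ are generated by a simple continuous curve $\gamma$ with $K_t=\gamma[0,t]$, and (b) $\gamma(t)\to\infty$ almost surely. The probabilistic backbone for both pieces is the observation that the image of a fixed real point under $g_t-W_t$ is a Bessel process.

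For the non-swallowing step, fix $x\in\R\setminus\{0\}$ and set $Y_t^x:=g_t(x)-W_t$. Using $W_t=\sqrt{\kappa}B_t$ together with the Loewner ODE, It\^o gives
\[
dY_t^x=\frac{2}{Y_t^x}\,dt-\sqrt{\kappa}\,dB_t,
\]
so $X_t:=Y_t^x/\sqrt{\kappa}$ is a Bessel process of dimension $d=1+4/\kappa$. For $\kappa\in[0,4]$, $d\ge 2$ and such a Bessel process almost surely never reaches $0$. Hence $x\notin K_t$ for all $t\ge 0$: no real point other than $0$ is swallowed. Moreover, for $0<x<y$ the ODE yields $\partial_t(g_t(y)-g_t(x))\le 0$, so the images of $x$ and $y$ stay ordered and positively separated under $g_t$; this is the analytic ingredient that prevents the curve from folding back onto itself.

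To produce $\gamma$, I would follow the Rohde--Schramm strategy: tentatively define
\[
\gamma(t):=\lim_{y\downarrow 0}g_t^{-1}(W_t+iy),
\]
and prove that the limit exists and is jointly H\"older continuous in $(t,y)$ down to $y=0$. The technical heart is a moment estimate of the form $\E[|f_t'(W_t+iy)|^\alpha]\le C(t)\,y^{-\beta}$ for the inverse map $f_t=g_t^{-1}$, obtained by running the reverse Loewner flow and checking that a suitable power of $|f_t'|$ combined with an appropriate power of $\Im$ is a positive supermartingale; the allowed exponent range depends on $\kappa$ and degenerates as $\kappa\uparrow 4$. A two-parameter Kolmogorov continuity criterion then produces joint continuity, and combined with the non-swallowing step this yields $K_t=\gamma[0,t]$. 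Simplicity follows from the Bessel coupling: a double point $\gamma(s)=\gamma(t)$ for $s<t$ would force $H_s\setminus H_t$ to contain an open set whose closure meets $\R$ in an arc, contradicting the conclusion that no real point is swallowed. I expect this moment estimate, and in particular its blow-up as $\kappa\to 4$, to be the main obstacle.

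Finally, for (b) I would combine scale invariance with Bessel transience. The scaling $(K_{\lambda t}/\sqrt{\lambda})\stackrel{d}{=}(K_t)$ gives, for any fixed $R>0$,
\[
\PP[\gamma(t)\in\overline{\U(0,R)}]=\PP[\gamma(1)\in\overline{\U(0,R/\sqrt{t})}]\to 0\quad\text{as }t\to\infty,
\]
so $|\gamma(t)|\to\infty$ in probability. To upgrade this to almost sure convergence, I would use that for $\kappa<4$ the Bessel $X_t$ is transient ($d>2$), hence $g_t(x)-W_t\to\infty$ for every $x\neq 0$; combined with the distortion estimates from step (a) applied at the first-exit stopping times from half-disks of radius $2^n$ and the strong Markov property, this rules out the event $\{\liminf_t|\gamma(t)|<\infty\}$. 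The endpoint $\kappa=4$, where the Bessel is only neighborhood-recurrent, I would handle separately by a direct harmonic-measure estimate on $K_t$.
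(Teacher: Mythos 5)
The paper does not actually give a proof of this proposition: it states ``The proof of this proposition is difficult, we will omit it in the lecture. The interested readers could consult \cite{RohdeSchrammSLEBasicProperty}.'' So your task is to be compared against the strategy of Rohde--Schramm rather than anything in the text. Your sketch is a reasonable outline of that strategy, and its probabilistic backbone is right: with $Y^x_t=g_t(x)-W_t$ one has $dY^x_t=\frac{2}{Y^x_t}\,dt-\sqrt{\kappa}\,dB_t$, so $Y^x_t/\sqrt{\kappa}$ is a Bessel process of dimension $d=1+4/\kappa\ge 2$, never reaches $0$, and hence no point of $\R\setminus\{0\}$ is ever swallowed. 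This is indeed exactly the threshold calculation that makes $\kappa=4$ the endpoint for simplicity.

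Two parts of the sketch need repair. First, the simplicity argument as you wrote it does not close: if $\gamma(s)=\gamma(t)$ with $0<s<t$, the loop $\gamma[s,t]$ can lie entirely in the interior of $\HH$, and the bounded region it cuts off has closure meeting $\gamma[0,t]$, not $\R$ --- so no real point gets swallowed and no contradiction appears. The actual argument in Rohde--Schramm (Theorem~6.1) works through the domain Markov property: for a rational time $q>0$, conditionally on $\gamma[0,q]$ the shifted curve $g_q(\gamma(q+\cdot))-W_q$ is a fresh SLE$_{\kappa}$ started at $0$, independent of the past, while $g_q(\partial H_q)\setminus\{W_q\}$ sits inside $\R\setminus\{W_q\}$. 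Since the fresh curve avoids $\R\setminus\{0\}$ by the Bessel lemma, $\gamma(q,\infty)$ avoids $K_q\setminus\{\gamma(q)\}$, and a union over rational $q$ gives injectivity; one must also take care that this line of reasoning relies on already knowing $(K_t)$ is generated by a curve, so the order of steps matters.

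Second, your worry that the derivative moment estimate degenerates as $\kappa\uparrow 4$ is misplaced. The Rohde--Schramm bound on $\E\bigl[|f_t'(W_t+iy)|^{\alpha}\bigr]$ degenerates as $\kappa\to 8$ (the single value their method cannot reach; continuity of SLE$_8$ was obtained later from the UST scaling limit), not as $\kappa\to 4$. The value $\kappa=4$ is special only because the associated Bessel dimension equals~$2$, which is the boundary for simplicity and for transience, not for existence or continuity of the trace. For the transience itself, your scaling argument correctly gives $|\gamma(t)|\to\infty$ in probability, and the upgrade to almost sure convergence requires the harmonic-measure type estimate you indicate; treating $\kappa=4$ separately because the Bessel is only neighborhood-recurrent there is the right instinct.
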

\noindent The proof of this proposition is difficult, we will omit it in the lecture. The interested readers could consult \cite{RohdeSchrammSLEBasicProperty}.
\begin{figure}[ht!]
\begin{center}
\includegraphics[width=0.47\textwidth]{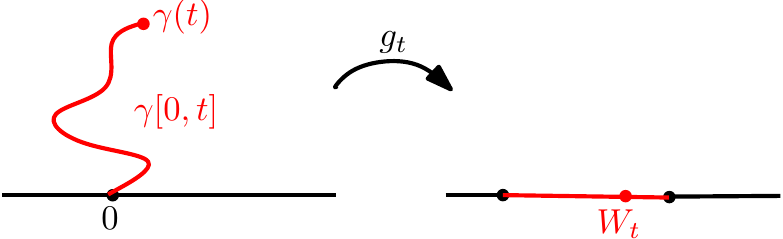}
\end{center}
\caption{\label{fig::sle_curve_explanation} The map $g_t$ is the conformal map from $\HH\setminus \gamma[0,t]$ onto $\HH$ normalized at $\infty$. And the tip of the curve $\gamma(t)$ is the preimage of $W_t$ under $g_t$: $\gamma(t)=g_t^{-1}(W_t)$.}
\end{figure}
\medbreak
\noindent\textbf{Restriction property of SLE$_{8/3}$}
\medbreak
In this part, we will compute the probability of SLE$_{8/3}$ process  $\gamma$ to avoid a set $A\in\LA_c$. To this end, we need to analyze the behavior of the image $\tilde{\gamma}=\Phi_A(\gamma)$. Define  $T=\inf\{t: \gamma(t)\in A\}$, and for $t<T$, set 
\[\tilde{\gamma}[0,t]:=\Phi_A(\gamma[0,t]).\]
Recall that $\Phi_A$ is the conformal map from $\HH\setminus A$ onto $\HH$ with $\Phi_A(0)=0$, $\Phi_A(\infty)=\infty$, and $\Phi_A(z)/z\to 1$ as $z\to\infty$, and that $g_t$ is the conformal map from $\HH\setminus \gamma[0,t]$ onto $\HH$ normalized at infinity. Define $\tilde{g}_t$ to be the conformal map from $\HH\setminus\tilde{\gamma}[0,t]$ onto $\HH$ normalized at infinity and $h_t$ the conformal map from $\HH\setminus g_t(A)$ onto $\HH$ such that Equation (\ref{eqn::sle8over3_exchange_relation}) holds. See Figure~\ref{fig::sle8over3_exchange}.
\begin{equation}\label{eqn::sle8over3_exchange_relation}
h_t\circ g_t=\tilde{g}_t\circ\Phi_A.
\end{equation}
\begin{figure}[ht!]
\begin{center}
\includegraphics[width=0.47\textwidth]{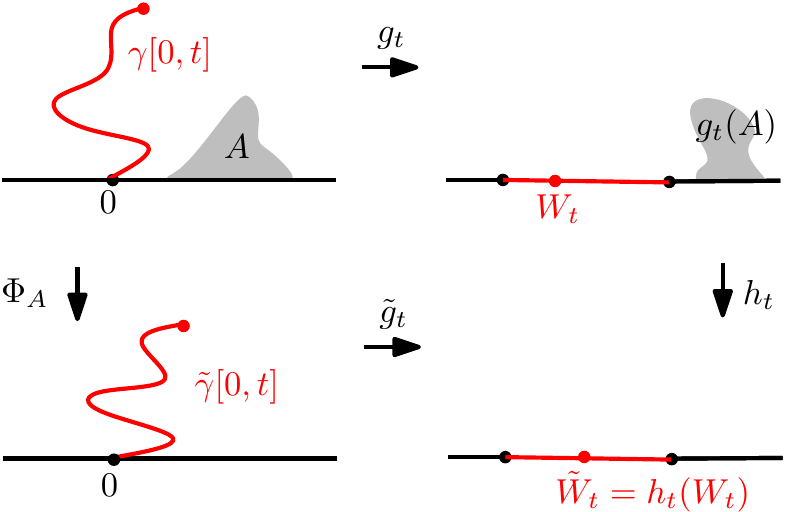}
\end{center}
\caption{\label{fig::sle8over3_exchange} The map $\Phi_A$ is the conformal map from $\HH\setminus A$ onto $\HH$ with $\Phi_A(0)=0$, $\Phi_A(\infty)=\infty$, and $\Phi_A(z)/z\to 1$ as $z\to\infty$. The map $g_t$ is the conformal map from $\HH\setminus \gamma[0,t]$ onto $\HH$ normalized at infinity. The map $\tilde{g}_t$ is the conformal map from $\HH\setminus\tilde{\gamma}[0,t]$ onto $\HH$ normalized at infinity. The map $h_t$ is the conformal map from $\HH\setminus g_t(A)$ onto $\HH$ such that $h_t\circ g_t=\tilde{g}_t\circ\Phi_A$.}
\end{figure}
\begin{proposition}\label{prop::chordalsle8over3_localmart}
When $\kappa=8/3$, the process
\[M_t=h'_t(W_t)^{5/8},\quad t<T\]
is a local martingale.
\end{proposition}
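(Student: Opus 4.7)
The plan is to derive a PDE for $h_t$ by differentiating the commutation relation $h_t\circ g_t=\tilde g_t\circ\Phi_A$ in $t$, then Taylor-expand $h_t$ around the real point $W_t$ to read off $\partial_t h'_t(W_t)$, and finally apply It\^o to $h'_t(W_t)^{5/8}$ and check that the drift vanishes precisely when $\kappa=8/3$ and the exponent is $5/8$.

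Concretely, let $\tilde a(t)=\mathrm{hcap}(\tilde\gamma[0,t])$ and let $\tilde W_t$ be the Loewner driving function for $\tilde\gamma$, so $\partial_t\tilde g_t(z)=2\tilde a'(t)/(\tilde g_t(z)-\tilde W_t)$. Differentiating $h_t(g_t(z))=\tilde g_t(\Phi_A(z))$ in $t$ and setting $w=g_t(z)$ gives
\[
\partial_t h_t(w)=\frac{2\tilde a'(t)}{h_t(w)-\tilde W_t}-\frac{2h'_t(w)}{w-W_t},
\]
and specialising $z=\gamma(t)$ yields the boundary identity $\tilde W_t=h_t(W_t)$. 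Since $A$ stays away from the tip while $t<T$, $h_t$ is analytic in a real neighbourhood of $W_t$, so I can write $h_t(w)=h_t(W_t)+h'_t(W_t)(w-W_t)+\tfrac12 h''_t(W_t)(w-W_t)^2+\cdots$. Substituting this expansion into the PDE and demanding cancellation of the $1/(w-W_t)$ singularity forces the capacity formula $\tilde a'(t)=h'_t(W_t)^2$; collecting the next two Taylor coefficients gives (by a short algebraic computation)
\[
\partial_t h'_t(W_t)=\frac{(h''_t(W_t))^2}{2\, h'_t(W_t)}-\frac{4}{3}\, h'''_t(W_t).
\]

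Writing $X_t=h'_t(W_t)$ and using $dW_t=\sqrt{\kappa}\,dB_t$, It\^o gives
\[
dX_t=\Bigl(\partial_t h'_t(W_t)+\tfrac{\kappa}{2}\, h'''_t(W_t)\Bigr)dt+h''_t(W_t)\,dW_t.
\]
With $\kappa=8/3$ the coefficient of $h'''_t(W_t)$ is $-4/3+\kappa/2=0$, so $dX_t=\tfrac{(h''_t(W_t))^2}{2 h'_t(W_t)}dt+h''_t(W_t)\,dW_t$. A second It\^o, applied to $M_t=X_t^{5/8}$, produces a drift
\[
\Bigl(\tfrac{5}{8}\cdot\tfrac{1}{2\,h'_t(W_t)}+\tfrac{1}{2}\cdot\tfrac{5}{8}\cdot(-\tfrac{3}{8})\cdot\kappa\cdot\tfrac{1}{h'_t(W_t)}\Bigr)(h''_t(W_t))^2\, X_t^{-3/8}\,dt,
\]
and a direct arithmetic check shows that this vanishes exactly for $\kappa=8/3$ paired with exponent $5/8$, proving that $M_t$ is a local martingale.

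I expect the main obstacle to be the Taylor-matching step: one has to treat $\partial_t$ as a partial derivative at fixed $w$ (not a derivative along the moving tip $W_t$), keep track of the fact that $\tilde g_t$ is not capacity-parameterised which forces the factor $\tilde a'(t)$, and chase enough terms in the expansion of $(h_t(w)-h_t(W_t))^{-1}$ to land on the correct formula for $\partial_t h'_t(W_t)$. Once that formula is in hand, matching $5/8$ with $\kappa=8/3$ is just a one-line verification; the more conceptual part is recognising that the precise pairing of parameters is exactly what makes the $h'''$ contribution cancel at the level of $X_t$, while the remaining It\^o correction cancels at the level of $X_t^{5/8}$.
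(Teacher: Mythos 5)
Your proof is correct and follows essentially the same route as the paper: differentiate the commutation relation $h_t\circ g_t=\tilde g_t\circ\Phi_A$ in $t$ to get the Loewner--type PDE for $h_t$, extract $\partial_t \tilde a=h_t'(W_t)^2$ by killing the singularity, compute $\partial_t h_t'(W_t)$ (the paper does this by differentiating the PDE once in $z$ and letting $z\to W_t$, which is the same information your Taylor-matching encodes), and then apply It\^o twice. One stylistic improvement you make is cleanly separating the partial derivative $(\partial_t h_t')(W_t)$ from the It\^o differential $d\bigl(h_t'(W_t)\bigr)$, which the paper conflates in its displayed formula; the arithmetic and the conclusion are identical.
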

\begin{proof}
Define
\[a(t)=a(\gamma[0,t]\cup A)=a(A)+a(\tilde{\gamma}[0,t]).\]
Let $t(u)$ be the inverse of $a$: for any $u>0$, define $t(u)=\inf\{t: a(t)=u\}$. Note that $a(t(u))=u$. In other words, the curve $(\tilde{\gamma}(t(u)), u>0)$ is parameterized by half-plane capacity. Therefore, 
\[\partial_u \tilde{g}_{t(u)}(z)=\frac{2}{\tilde{g}_{t(u)}(z)-\tilde{W}_{t(u)}}.\]
Since $\partial_t a \partial_u t=1$, we have
\[\partial_t\tilde{g}_t(z)=\frac{2\partial_t a}{\tilde{g}_t(z)-\tilde{W}_t}.\]
Plugging Equation (\ref{eqn::sle8over3_exchange_relation}), we have that
\begin{equation}\label{eqn::sle8over3_first}
(\partial_th_t)(z)+h'_t(z)\frac{2}{z-W_t}=\frac{2\partial_t a}{h_t(z)-h_t(W_t)}.
\end{equation}
We can first find $\partial_t a$: multiply $h_t(z)-h_t(W_t)$ to both sides of Equation (\ref{eqn::sle8over3_first}), and then let $z\to W_t$, we have
\[\partial_t a=h'_t(W_t)^2.\]
Then Equation (\ref{eqn::sle8over3_first}) becomes
\begin{equation}\label{eqn::sle8over3_second}
(\partial_th_t)(z)=\frac{2h_t'(W_t)^2}{h_t(z)-h_t(W_t)}-\frac{2h'_t(z)}{z-W_t}.
\end{equation}
Differentiate Equation (\ref{eqn::sle8over3_second}) with respect to $z$, we have
\[(\partial_th_t)'(z)=\frac{-2h_t'(W_t)^2h'_t(z)}{(h_t(z)-h_t(W_t))^2}+\frac{2h'_t(z)}{(z-W_t)^2}-\frac{2h''_t(z)}{z-W_t}.\]
Let $z\to W_t$, we have
\[(\partial_th'_t)(W_t)=h''_t(W_t)dW_t+\left(\frac{h''_t(W_t)^2}{2h'_t(W_t)}+(\frac{\kappa}{2}-\frac{4}{3})h'''_t(W_t)\right)dt.\]
When $\kappa=8/3$,
\[d(h'_t(W_t))^{5/8}=\frac{5h''_t(W_t)}{8h'_t(W_t)^{3/8}}dW_t.\]
\end{proof}
\begin{theorem}\label{thm::sle8over3_restriction}
Suppose $\gamma$ is a chordal SLE$_{8/3}$ in $\HH$ from 0 to $\infty$. For any $A\in\LA_c$, we have
\[\PP[\gamma\cap A=\emptyset]=\Phi_A'(0)^{5/8}.\]
\end{theorem}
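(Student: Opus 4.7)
The plan is to use the local martingale $M_t = h_t'(W_t)^{5/8}$ from Proposition~\ref{prop::chordalsle8over3_localmart} together with an optional stopping argument. Since $h_0 = \Phi_A$ and $W_0 = 0$, we have $M_0 = \Phi_A'(0)^{5/8}$. If $M_t$ can be shown to be a bounded (hence uniformly integrable) martingale on $[0, T)$ whose limit at $T$ equals $\mathbf{1}_{\{T = \infty\}}$ almost surely, then optional stopping yields
\[
\Phi_A'(0)^{5/8} \;=\; M_0 \;=\; \E[M_{T-}] \;=\; \PP[\gamma \cap A = \emptyset],
\]
which is the claim.

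First I would show that $M_t \in [0, 1]$. The map $h_t$ is a conformal map from $\HH \setminus g_t(A)$ onto $\HH$ whose normalization at infinity is forced by Equation~(\ref{eqn::sle8over3_exchange_relation}): comparing the expansions $g_t(z) = z + 2t/z + O(1/z^2)$, $\Phi_A(z) = z + O(1/z)$, and $\tilde{g}_t(z) = z + O(1/z)$ shows that $h_t$ has a hydrodynamic normalization $h_t(z) = z + O(1/z)$ at $\infty$. For such a conformal map removing a hull in $\overline{\HH}$, the derivative on the real line off the hull satisfies $0 \le h_t'(x) \le 1$ (a standard consequence of Schwarz reflection, verifiable on model examples such as the half-disk map $h(z) = z + r^2/z$). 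Consequently $M_t \in [0,1]$ and the local martingale of Proposition~\ref{prop::chordalsle8over3_localmart} upgrades to a genuine bounded martingale.

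Next I would identify the boundary limit $M_{T-}$. On the event $\{T = \infty\} = \{\gamma \cap A = \emptyset\}$, the simple curve $\gamma$ is transient and escapes to $\infty$ while leaving $A$ at bounded distance, so after applying $g_t$ the hull $g_t(A)$ becomes small in half-plane capacity relative to scales around $W_t$; standard estimates for normalized maps removing a vanishing hull then give $h_t'(W_t) \to 1$, hence $M_t \to 1$. On the complementary event $\{T < \infty\}$, the tip $\gamma(t) = g_t^{-1}(W_t)$ converges to a point of $A$, so $g_t(A)$ accumulates at $W_t$ on the real line; a normalized conformal map removing a boundary hull that pinches onto a prescribed real point has derivative tending to zero there, so $M_t \to 0$. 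Applying dominated convergence to the bounded martingale $M$ then closes the argument.

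The main obstacle is the second limit. On $\{T < \infty\}$ the SLE$_{8/3}$ curve could approach $A$ in various ways (at a single point, tangentially along an arc, or by wrapping around part of $A$), and in each case one needs a uniform statement that the hulls $g_t(A)$ pinch $W_t$ strongly enough to force $h_t'(W_t) \to 0$. This is typically handled via Beurling/extremal-length estimates that control derivatives of conformal maps at boundary points being absorbed by the complementary set, combined with the fact that SLE$_{8/3}$ is a simple curve since $8/3 < 4$. The companion limit $M_t \to 1$ on $\{T = \infty\}$ is comparatively soft, following from the fact that removing a hull of vanishing capacity from far away perturbs the derivative at a fixed boundary point by $o(1)$.
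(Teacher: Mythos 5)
Your proposal follows essentially the same route as the paper: use the local martingale $M_t=h_t'(W_t)^{5/8}$ from Proposition~\ref{prop::chordalsle8over3_localmart}, show it is bounded, identify its limit as the indicator of $\{T=\infty\}$, and apply optional stopping to read off $M_0=\Phi_A'(0)^{5/8}$. The only cosmetic difference is how you justify $M_t\le 1$: you invoke a direct conformal-geometric argument (hydrodynamic normalization and Schwarz reflection), whereas the paper observes that $h_t'(W_t)$ is exactly the probability that a Brownian excursion $\mu^{\sharp}_{\HH}(W_t,\infty)$ avoids $g_t(A)$ (Proposition~\ref{prop::be_restriction}), which gives boundedness for free and also makes the two boundary limits transparent; both arguments are standard and correct.
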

\begin{proof}
Since we can approximate any compact hull by compact hulls with smooth boundary, we may assume that $A$ has smooth boundary. Set
\[M_t=(h'_t(W_t))^{5/8}.\]
If $e$ is a Brownian excursion with law $\mu^{\sharp}_{\HH}(W_t,\infty)$, then $h'_t(W_t)$ is the probability of $e$ to avoid $g_t(A)$. See Proposition \ref{prop::be_restriction}. Thus, for $t<T$, we have $h'_t(W_t)\le 1$ and $M$ is bounded.

If $T=\infty$, we have $\lim_{t\to\infty}h'_t(W_t)=1$.

If $T<\infty$, we have $\lim_{t\to T}h'_t(W_t)=0$.

Roughly speaking, when $T=\infty$, $g_t(A)$ will be far away from $W_t$ as $t\to\infty$ and thus the probability for $e$ to avoid $g_t(A)$ converges to 1; whereas, when $T<\infty$, the set $g_t(A)$ will be very close to $W_t$ as $t\to T$ and the probability for $e$ to avoid $g_t(A)$ converges to 0. (See \cite{LawlerSchrammWernerConformalRestriction} for details.)

Since $M$ converges in $L^1$ and a.s. when $t\to T$, we have that
\[\PP[\gamma\cap A=\emptyset]=\PP[T=\infty]=\E[M_T]=\E[M_0]=\Phi_A'(0)^{5/8}.\]
\end{proof}

\newpage
\section{Chordal conformal restriction}\label{sec::chordal_restriction}
\subsection{Setup for chordal restriction sample}\label{subsec::setup_chordal_restriction}
Let $\Omega$ be the collection of closed sets $K$ of $\overline{\HH}$ such that
\[K\cap \R=\{0\}, K\text{ is unbounded, } K \text{ is connected}\]
\[\text{and } \HH\setminus K \text{ has two connected components.}\]
Recall that $\LA_c$ is defined in Definition~\ref{def::collection_chordal}. We endow $\Omega$ with the $\sigma$-field generated by the events $[K\in\Omega: K\cap A=\emptyset]$ where $A\in\LA_c$. This family of events is closed under finite intersection, so that a probability measure on $\Omega$ is characterized by the values of $\PP[K\cap A=\emptyset]$ for $A\in\LA_c$: Let $\PP,\PP'$ are two probability measures on $\Omega$. If $\PP[K\cap A=\emptyset]=\PP'[K\cap A=\emptyset]$ for all $A\in\LA_c$, then $\PP=\PP'$.
\begin{definition} A probability measure $\PP$ on $\Omega$ is said to satisfy chordal conformal restriction property, if the following is true:
\begin{enumerate}
\item [(1)] For any $\lambda>0$, $\lambda K$ has the same law as $K$;
\item [(2)] For any $A\in\LA_c$, $\Phi_A(K)$ conditioned on $[K\cap A=\emptyset]$ has the same law as $K$.
\end{enumerate}
\end{definition}
\begin{theorem}\label{thm::chordal_restriction}
Chordal restriction measures have the following description.
\begin{enumerate}
\item [(1)] (Characterization) A chordal restriction measure is fully characterized by a positive real $\beta>0$ such that, for every $A\in\LA_c$,
\begin{equation}\label{eqn::chordal_restiction_characterization}
\PP[K\cap A=\emptyset]=\Phi'_A(0)^{\beta}.
\end{equation}
We denote the corresponding chordal restriction measure by $\PP(\beta)$.
\item [(2)] (Existence) The measure $\PP(\beta)$ exists if and only if $\beta\ge 5/8$.
\end{enumerate}
\end{theorem}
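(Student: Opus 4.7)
The plan is to treat the characterization (1) and the existence question (2) separately.

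For (1), observe first that the events $\{K\cap A=\emptyset\}$, $A\in\LA_c$, form a $\pi$-system generating the chosen $\sigma$-algebra on $\Omega$, so the law of $K$ is determined by $f(A):=\PP[K\cap A=\emptyset]$. The core argument then mirrors the semigroup step in the proof of Theorem~\ref{thm::ppp_be_restriction}. For $A,B\in\LA_c$ with $A\cap\R,\,B\cap\R\subset(0,\infty)$, introduce $A\cdot B=\Phi_B^{-1}(A)\cup B$. Then $\Phi_{A\cdot B}=\Phi_A\circ\Phi_B$ forces $\log\Phi'_{A\cdot B}(0)=\log\Phi'_A(0)+\log\Phi'_B(0)$, and applying the conformal restriction property first to $B$ and then, conditionally on $\{K\cap B=\emptyset\}$, to $\Phi_B(K)\cap A$ yields $f(A\cdot B)=f(A)f(B)$. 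Thus $-\log f$ and $-\log\Phi'_\cdot(0)$ are both additive on the semigroup $(\LA_c,\cdot)$. Restricting to a one-parameter Loewner family $(A_t)_{t\ge 0}$ anchored on $\R_+$ makes each of them linear in $t$, so their ratio is a constant $\beta$; scale invariance of $K$ combined with the freedom to vary the anchor of the Loewner family shows that this $\beta$ is independent of the family, yielding~\eqref{eqn::chordal_restiction_characterization}. Positivity $\beta>0$ follows because $K$ is nontrivial ($0\in K$ and $K$ unbounded already imply $f(A)<1$ for some $A$).

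For the existence half of (2), the case $\beta=5/8$ is exactly Theorem~\ref{thm::sle8over3_restriction} applied to the fill-in of a chordal SLE$_{8/3}$ trace, and the integer values $\beta=n\in\N$ come from the fill-in of $n$ independent Brownian excursions via Proposition~\ref{prop::be_restriction}. To reach every $\beta\ge 5/8$, I would take an SLE$_{8/3}$ trace $\gamma$ together with an independent Poisson point process $L$ of Brownian loops in $\HH$ of intensity $\alpha\,\mu^{loop}_{\HH}$, and define $K^\alpha$ to be the fill-in of the union of $\gamma$ with every loop of $L$ connected to $\gamma$ by a chain of loops. Combining Theorem~\ref{thm::sle8over3_restriction} with the half-plane analogue of Theorem~\ref{thm::ppp_bl_restriction} (the restriction identity for loops hit by a hull in $\HH$) gives $\PP[K^\alpha\cap A=\emptyset]=\Phi'_A(0)^{5/8+c\alpha}$ for an explicit constant $c>0$; choosing $\alpha=(\beta-5/8)/c$ then produces a sample of the sought $\PP(\beta)$.

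For the non-existence direction, one must exclude $\beta<5/8$. A convenient probe is the semicircular hull $A_{x,\epsilon}=\overline{\U(x,\epsilon)}\cap\overline{\HH}$ with $0<\epsilon<x$, for which $\Phi_{A_{x,\epsilon}}(z)=z+\epsilon^2/(z-x)+\epsilon^2/x$ and thus $\Phi'_{A_{x,\epsilon}}(0)=1-(\epsilon/x)^2$, so any hypothetical $\PP(\beta)$ would satisfy $\PP[K\cap A_{x,\epsilon}\neq\emptyset]\sim\beta(\epsilon/x)^2$. To pin $\beta\ge 5/8$ one analyses the right boundary $\eta$ of $K$: it is a simple continuous random curve from $0$ to $\infty$, and since $K\cap A=\emptyset\Leftrightarrow\eta\cap A=\emptyset$ whenever $A\in\LA_c$ is attached to the positive real axis, $\eta$ inherits a one-sided conformal restriction property with the same exponent $\beta$. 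A characterization of conformally invariant curves (essentially the converse to Theorem~\ref{thm::sle8over3_restriction}) identifies $\eta$ with an SLE$_{8/3}$-type curve whose minimal admissible restriction exponent is $5/8$, forcing $\beta\ge 5/8$. The hardest step is precisely this lower bound: unlike the abstract semigroup argument of (1), it requires genuine quantitative control of $K$ near $0$ and the identification of the right boundary of $K$ with an SLE$_{8/3}$-type curve; the existence construction for $\beta>5/8$ is also delicate but reduces cleanly to the already-proved restriction identities for SLE$_{8/3}$ (Theorem~\ref{thm::sle8over3_restriction}) and for Brownian loop soups (Theorem~\ref{thm::ppp_bl_restriction}).
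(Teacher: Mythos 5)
Your characterization argument in (1) takes a genuinely different route from the paper's: you exploit the semigroup $A\cdot B=\Phi_B^{-1}(A)\cup B$ (as in the proof of Theorem~\ref{thm::ppp_be_restriction}) to show both $-\log f$ and $-\log\Phi'_\cdot(0)$ are additive, then pass to a Loewner family to make them proportional. The paper instead shows that $\PP[K\cap\U(x,\eps)\neq\emptyset]\sim\lambda(x)\eps^2$, proves $\lambda(x)=cx^{-2}$ from scale invariance, and matches the exponent by comparing with the explicit map $\Phi_{\overline{\U}(x,\eps)}$. Both are sketches, and yours is a legitimate alternative at the level of rigour displayed here; but note that your additivity argument as stated only runs cleanly on the one-sided semigroup $\{A:A\cap\R\subset(0,\infty)\}$, and you would need to splice in a separate argument (e.g.\ via $A\cdot B$ with $A,B$ on opposite sides and scale invariance) to conclude that the constant $\beta$ is the same on both sides.

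There is a genuine gap in your non-existence argument for $\beta<5/8$. You argue that the right boundary $\eta$ inherits right-sided restriction with exponent $\beta$, is identified with an SLE$_{8/3}$-type curve, and then assert that the minimal admissible right-sided restriction exponent is $5/8$. That last claim is false: Theorem~\ref{thm::sle8over3rho_rightrestriction} and Remark~\ref{rem::sle8over3rho_rightsided_restriction} show that the one-sided restriction measure exists for \emph{every} $\beta>0$ (take $\rho=\rho(\beta)\in(-2,\infty)$), so nothing is gained by studying the right boundary in isolation. The obstruction to $\beta<5/8$ is intrinsically two-sided. The paper's argument is: for $\beta<5/8$ one has $\rho(\beta)<0$, so the right boundary $\gamma$ is SLE$_{8/3}(\rho)$ with a drift towards $\R_-$; this makes $\PP[\text{$i$ lies to the right of $\gamma$}]$ strictly greater than the SLE$_{8/3}$ value $1/2$, whereas the symmetry of the two-sided restriction measure with respect to the imaginary axis (which follows from uniqueness in part (1)) forces this probability to be at most $1/2$. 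Your proposal does not contain the ingredient that turns the one-sided identification into a contradiction.

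Your existence construction is also different from the paper's and is under-supplied. The paper constructs $\PP(\beta)$ for $\beta>5/8$ by running $\gamma^R\sim$ SLE$^L_{8/3}(\rho(\beta))$ and then conditionally sampling $\gamma^L\sim$ SLE$^R_{8/3}(\rho(\beta)-2)$ inside the left component, checking the restriction identity via the bounded martingale $M_t=h'_t(W_t)^{5/8}h'_t(O_t)^{\rho(3\rho+4)/32}\bigl(\tfrac{h_t(W_t)-h_t(O_t)}{W_t-O_t}\bigr)^{3\rho/8}$. You instead propose attaching a Brownian loop-soup cluster to an SLE$_{8/3}$ and citing a ``half-plane analogue'' of Theorem~\ref{thm::ppp_bl_restriction}. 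But that theorem is proved only for loops surrounding the origin in $\U$, where the conformal normalization $\Phi_A(0)=0,\Phi_A'(0)>0$ gives the clean formula $|\Phi_A'(0)|^{-\alpha}$; the analogous statement for the mass of loops in $\HH$ linking a chordal curve to a hull $A$ is neither stated nor proved here, and the asserted identity $\PP[K^\alpha\cap A=\emptyset]=\Phi_A'(0)^{5/8+c\alpha}$ needs a separate justification. So, as written, the existence half would need additional machinery not developed in the paper; the paper's SLE$_{8/3}(\rho)$ construction closes that loop using only Theorem~\ref{thm::sle8over3rho_rightrestriction}.
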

\begin{remark}
We already know that $\PP(\beta)$ exist for $\beta=1$ (by Proposition \ref{prop::be_restriction}), $\beta=5/8$ (by Theorem \ref{thm::sle8over3_restriction}), and $\beta=5/8m+n$ for $m\ge 1, n\ge 1$.
\end{remark}

\begin{proof}[Proof of Theorem \ref{thm::chordal_restriction}. Characterization]
Suppose that $K$ is scale-invariant and satisfies Equation (\ref{eqn::chordal_restiction_characterization}) for every $A\in\LA_c$, then we could check that $K$ does satisfy chordal conformal restriction property. Thus we only need to show that chordal restriction measures have only one degree of freedom.

Fix $x\in\R\setminus\{0\}$ and let $\eps>0$. We claim that the probability
\[\PP[K\cap B(x,\eps)\neq\emptyset]\]
decays like $\eps^2$ as $\eps$ goes to zero, and the limit
\[\lim_{\eps\to 0}\frac{1}{\eps^2}\PP[K\cap B(x,\eps)\neq\emptyset]\]
exists which we denote by $\lambda(x)$ (The detail of the proof of this argument could be found in \cite{WuConformalRestrictionRadial}).

Furthermore, $\lambda(x)\in (0,\infty)$. Since $K$ is scale-invariant, we have that, for any $y>0$,
\[\lambda(yx)=\lim_{\eps\to 0}\frac{1}{\eps^2}\PP[K\cap B(yx,\eps)\neq\emptyset]=\lim_{\eps\to 0}\frac{1}{\eps^2}\PP[K\cap B(x,\eps/y)\neq\emptyset]=y^{-2}\lambda(x).\]
Since $\lambda$ is an even function, there exists $c >0$ such that
\[\lambda(x)=c x^{-2}.\]
Since there is only one-degree of freedom, when $K$ satisfies chordal restriction property, we must have that Equation (\ref{eqn::chordal_restiction_characterization}) holds for some $\beta>0$.

Denote $f_{x,\eps}=\Phi_{\bar{\U}(x,\eps)}$. In fact,
\[f_{x,\eps}(z)=z+\frac{\eps^2}{z-x}+\frac{\eps^2}{x}.\]
Note that,
\[\PP[K\cap \U(x,\eps)\neq\emptyset]\approx \lambda(x)\eps^2,\]
and that
\[1-f_{x,\eps}'(0)^{\beta}\approx \beta\frac{\eps^2}{x^2}.\]
This implies that $\beta=c$.
\end{proof}

In the following of this section, we will first show that $\PP(\beta)$ does not exist for $\beta<5/8$ and then construct all $\PP(\beta)$ for $\beta> 5/8$.

\subsection{Chordal SLE$_{\kappa}(\rho)$ process}
\noindent\textbf{Definition}
\medbreak
Suppose $\kappa>0,\rho>-2$. Chordal SLE$_{\kappa}(\rho)$ process is the Loewner chain driven by $W$ which is the solution to the following SDE:
\begin{equation}\label{eqn::slekapparho_sde}
dW_t=\sqrt{\kappa}dB_t+\frac{\rho dt}{W_t-O_t},\quad dO_t=\frac{2dt}{O_t-W_t}, \quad W_0=O_0=0, \quad O_t\le W_t.\end{equation}
The evolution is well-defined at times when $W_t>O_t$, but a bit delicate when $W_t=O_t$. We first show the existence of the solution to this SDE.

Define $Z_t$ to be the solution to the Bessel equation
\[dZ_t=\sqrt{\kappa}dB_t+(\rho+2)\frac{dt}{Z_t},\quad Z_0=0.\] In other words, $Z$ is $\sqrt{\kappa}$ times a Bessel process of dimension
\[d=1+2(\rho+2)/\kappa.\] 
This process is well-defined for all $\rho>-2$, and for all $t\ge 0$,
\[\int_0^{t}\frac{du}{Z_u}=(Z_t-\sqrt{\kappa}B_t)/(\rho+2)<\infty. \] Then define
\[O_t=-2\int_0^{t}\frac{du}{Z_u},\quad W_t=Z_t+O_t.\]
Clearly, $(W_t,O_t)$ is a solution to Equation (\ref{eqn::slekapparho_sde}). When $\rho=0$, we get the ordinary SLE$_{\kappa}$.

Second, we explain the geometric meaning of the process $(O_t,W_t)$. Recall
\[\partial_t g_t(z)=\frac{2}{g_t(z)-W_t},\quad g_0(z)=z.\]
Suppose $(K_t,t\ge 0)$ is the Loewner chain generated by $W$, then $g_t$ is the conformal map from $\HH\setminus K_t$ onto $\HH$ normalized at $\infty$. The point $W_t$ is the image of the tip, and $O_t$ is the image of the leftmost point of $\R\cap K_t$. See Figure~\ref{fig::slekapparho_explanation}. Basic properties of SLE$_{\kappa}(\rho)$ process: Fix $\kappa\in [0,4]$, $\rho>-2$,
\begin{itemize}
\item It is scale-invariant: for any $\lambda>0$, $(\lambda^{-1}K_{\lambda^2t},t\ge 0)$ has the same law as $K$.
\item $(K_t,t\ge 0)$ is generated by a continuous curve $(\gamma(t),t\ge 0)$ in $\overline{\HH}$ from 0 to $\infty$.
\item If $\rho\ge \kappa/2-2$, the dimension of the Bessel process $Z_t=W_t-O_t$ is greater than $2$ and $Z$ does not hit zero, thus almost surely $\gamma\cap\R=\{0\}$. If $\rho\in(-2,\kappa/2-2)$, almost surely $\gamma\cap\R\neq\{0\}$ and $K_{\infty}\cap\R=(-\infty,0]$.\footnote{When $\rho>0$, the process $W_t$ gets a push away from $O_t$, the curve is repelled from $\R_-$. When $\rho<0$, the curve is attracted to $\R_-$. When $\rho<\kappa/2-2$, the attraction is strong enough so that the curve touches $\R_-$.}
\end{itemize}
\begin{figure}[ht!]
\begin{subfigure}[b]{0.47\textwidth}
\begin{center}
\includegraphics[width=\textwidth]{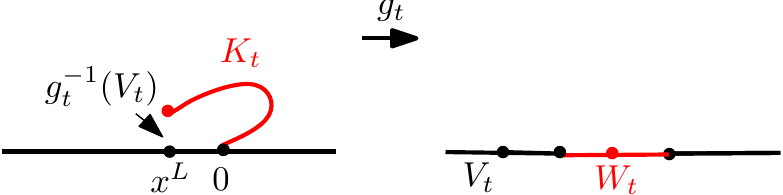}
\end{center}
\caption{When $\rho\ge \kappa/2-2$, the curve does not hit $\R_-$. }
\end{subfigure}
$\quad$
\begin{subfigure}[b]{0.47\textwidth}
\begin{center}\includegraphics[width=\textwidth]{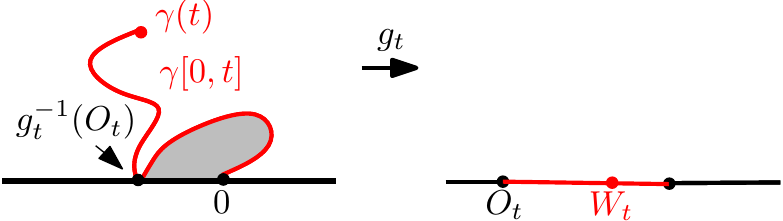}
\end{center}
\caption{When $\rho\in (-2,\kappa/2-2)$, the curve touches the boundary. }
\end{subfigure}
\caption{\label{fig::slekapparho_explanation}Geometric meaning of $(O_t,W_t)$ in SLE$_{\kappa}(\rho)$ process. The preimage of $W_t$ under $g_t$ is the tip of the curve, the preimage of $O_t$ under $g_t$ is the leftmost point of $K_t\cap\R$. }
\end{figure}

\begin{theorem}\label{thm::sle8over3rho_rightrestriction}
Fix $\rho>-2$. Let $(K_t,t\ge 0)$ be the hulls of chordal SLE$_{8/3}(\rho)$ and $K=\cup_{t\ge 0}K_t$. Then $K$ satisfies the right-sided restriction property with exponent
\begin{equation}\label{eqn::beta_rho_relation}\beta=\frac{3\rho^2+16\rho+20}{32}.\end{equation}
In other words, for every $A\in\LA_c$ such that $A\cap\R\subset(0,\infty)$, we have
\[\PP[K\cap A=\emptyset]=\Phi_A'(0)^{\beta}.\]
\end{theorem}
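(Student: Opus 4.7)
The plan is to follow the proof of Theorem~\ref{thm::sle8over3_restriction} and exhibit a local martingale $(M_t)_{t<T}$, with $T := \inf\{t \ge 0 : \gamma(t) \in A\}$, such that $M_0 = \Phi_A'(0)^\beta$ and $M_t \to 1_{\{T = \infty\}}$ (in $L^1$) as $t \to T$. The new feature, compared to the $\rho=0$ case, is that $M_t$ must also depend on the force point $O_t$. Keeping the notation of Proposition~\ref{prop::chordalsle8over3_localmart}, let $h_t$ be defined by $h_t \circ g_t = \tilde{g}_t \circ \Phi_A$, so that equation \eqref{eqn::sle8over3_second} and the identity $\partial_t a = h_t'(W_t)^2$ still hold. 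Evaluating \eqref{eqn::sle8over3_second} and its $z$-derivative at $z = O_t$ (where $O_t < W_t$) yields closed expressions for $\partial_t h_t(O_t)$ and $\partial_t h_t'(O_t)$; combined with $dO_t = -2\, dt/(W_t - O_t)$ and $dW_t = \sqrt{8/3}\, dB_t + \rho\, dt/(W_t - O_t)$, these suffice to apply It\^o's formula to any smooth function of $(h_t'(W_t), h_t'(O_t), W_t - O_t, h_t(W_t) - h_t(O_t))$.

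The ansatz I would try is
\[M_t = h_t'(W_t)^{b_1}\, h_t'(O_t)^{b_2}\, \left(\frac{h_t(W_t) - h_t(O_t)}{W_t - O_t}\right)^{b_3}.\]
I would compute $d\log M_t$ by summing the three individual It\^o expansions and then add the quadratic-variation correction $\tfrac{1}{2}\, d\langle \log M\rangle_t$ to recover the drift of $dM_t/M_t$. This drift is a polynomial in the independent quantities $h_t''(W_t)/h_t'(W_t)$, $(W_t - O_t)^{-1}$ and $h_t'(W_t)(h_t(W_t)-h_t(O_t))^{-1}$; collecting the coefficient of each monomial produces a linear system for $(b_1, b_2, b_3)$ whose unique solution is
\[b_1 = \tfrac{5}{8},\qquad b_3 = \tfrac{3\rho}{8},\qquad b_2 = \frac{b_3(1+2b_3)}{3} = \frac{3\rho^2 + 4\rho}{32}.\]
Summing gives $b_1 + b_2 + b_3 = (3\rho^2 + 16\rho + 20)/32 = \beta$. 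Since $W_0 = O_0 = 0$ and $h_0 = \Phi_A$, the third factor tends (by continuity) to $h_0'(0) = \Phi_A'(0)$, so $M_0 = \Phi_A'(0)^{b_1 + b_2 + b_3} = \Phi_A'(0)^\beta$.

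It remains to show $M_t \to 1_{\{T = \infty\}}$ in $L^1$. On $\{T = \infty\}$ the set $g_t(A)$ drifts away from both $W_t$ and $O_t$ as $t \to \infty$, so $h_t'(W_t), h_t'(O_t) \to 1$ and the ratio $(h_t(W_t)-h_t(O_t))/(W_t-O_t) \to 1$, whence $M_t \to 1$; on $\{T < \infty\}$, $h_t'(W_t) \to 0$ as $W_t$ approaches $g_t(A)$, and since $b_1 = 5/8 > 0$ this forces $M_t \to 0$ provided the other two factors do not blow up. This last point is the main obstacle. Unlike the $\rho = 0$ case where $M_t = h_t'(W_t)^{5/8} \in [0,1]$ by its Brownian-excursion interpretation (Proposition~\ref{prop::be_restriction}), the exponents $b_2$ and $b_3$ can be negative (precisely for $\rho \in (-2, 0)$), so $h_t'(O_t)^{b_2}$ and the third factor raised to $b_3$ must be controlled via separate geometric estimates on the distance between $g_t(A)$ and $\{W_t, O_t\}$, and on the ratio $(h_t(W_t)-h_t(O_t))/(W_t-O_t)$ when $W_t - O_t$ becomes small. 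Once uniform integrability is established, optional stopping yields $\PP[T = \infty] = \E[M_T] = M_0 = \Phi_A'(0)^\beta$. The It\^o computation itself is long but mechanical given the formulas derived in Proposition~\ref{prop::chordalsle8over3_localmart}.
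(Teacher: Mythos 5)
Your proposal reproduces the paper's argument in its essentials: the ansatz
\[M_t=h_t'(W_t)^{5/8}\,h_t'(O_t)^{\rho(3\rho+4)/32}\left(\frac{h_t(W_t)-h_t(O_t)}{W_t-O_t}\right)^{3\rho/8}\]
with the exponents you derive ($b_1=5/8$, $b_2=\rho(3\rho+4)/32$, $b_3=3\rho/8$, summing to $\beta$) is exactly the martingale used in the paper, the It\^o computation is carried out the same way (evaluating \eqref{eqn::sle8over3_second} and its derivative at $z=O_t$, combined with the $dW_t,dO_t$ dynamics), and the initial-value computation $M_0=\Phi_A'(0)^\beta$ is correct. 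Where your sketch is vaguer than the paper is in the boundedness step. The paper's key structural observation, which you do not mention, is that $h_t'$ is \emph{monotone decreasing on $(-\infty,W_t]$} (because $g_t(A)$ lies to the right of $W_t$), which immediately gives the ordered chain
\[0\le h_t'(W_t)\le\frac{h_t(W_t)-h_t(O_t)}{W_t-O_t}\le h_t'(O_t)\le 1;\]
from this chain, together with a little extra work (the paper cites \cite[Lemma 8.10]{LawlerSchrammWernerConformalRestriction}), one gets $M_t\le h_t'(W_t)^\delta$ for some $\delta>0$, hence boundedness, uniform integrability, and the identification of the a.s.\ limit as $1_{\{T=\infty\}}$. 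Your appeal to ``separate geometric estimates on the distance between $g_t(A)$ and $\{W_t,O_t\}$'' points in the right general direction but is harder to execute than the monotonicity argument and would likely be rediscovered as essentially the same thing. With that chain of inequalities added, your proof coincides with the paper's.
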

\begin{proof}
The definitions of $g_t,\tilde{g}_t,h_t$ are recalled in Figure \ref{fig::sle8over3rho_exchange}. Set $T=\inf\{t: K_t\cap A\neq\emptyset\}$, and define, for $t<T$,
\[M_t=h_t'(W_t)^{5/8}h_t'(O_t)^{\rho(3\rho+4)/32}\left(\frac{h_t(W_t)-h_t(O_t)}{W_t-O_t}\right)^{3\rho/8}.\]
Then $(M_t,t<T)$ is a local martingale \cite[Lemma 8.9]{LawlerSchrammWernerConformalRestriction}:
\begin{equation*}
\begin{split}
dh_t(W_t)&=\left(\frac{\rho h_t'(W_t)}{W_t-O_t}-(5/3)h_t''(W_t)\right)dt+\sqrt{8/3}h_t'(W_t)dB_t,\\
dh_t'(W_t)&=\left(\frac{\rho h_t''(W_t)}{W_t-O_t}+\frac{h_t''(W_t)^2}{2h_t'(W_t)}\right)dt+\sqrt{8/3}h_t''(W_t)dB_t,\\
dh_t(O_t)&=\frac{2h_t'(W_t)^2}{h_t(O_t)-h_t(W_t)}dt,\\
dh_t'(O_t)&=\left(\frac{2h_t'(O_t)}{(O_t-W_t)^2}-\frac{2h_t'(W_t)^2h_t'(O_t)}{(h_t(O_t)-h_t(W_t))^2}\right)dt.
\end{split}\end{equation*}
Combining these identities, we see that $M$ is a local martingale.

Since $h_t'$ is decreasing in $(-\infty,W_t]$, we have
\[0\le h_t'(W_t)\le \frac{h_t(W_t)-h_t(O_t)}{W_t-O_t}\le h_t'(O_t)\le 1.\]

In fact, there exists $\delta>0$ such that $M_t\le h_t'(W_t)^{\delta}$. (We omit the proof of this point, details could be found in \cite[Lemma 8.10]{LawlerSchrammWernerConformalRestriction}). In particular, we have $M_t\le 1$ and $(M_t,t<T)$ is a bounded martingale.

If $T=\infty$, we have 
\[\lim_{t\to\infty}h'_t(W_t)=1\quad \text{and }  \lim_{t\to\infty}M_t=1.\]
If $T<\infty$, we have 
\[\lim_{t\to T}h'_t(W_t)=0, \quad \text{and } \lim_{t\to T}M_t=0.\]
Thus
\[\PP[K\cap A=\emptyset]=\PP[T=\infty]=\E[M_T]=M_0=\Phi_A'(0)^{\beta}\]
where $\beta$ is the same as in  Equation (\ref{eqn::beta_rho_relation}):
\[\beta=\frac{5}{8}+\frac{\rho(3\rho+4)}{32}+\frac{3\rho}{8}=\frac{3\rho^2+16\rho+20}{32}.\]
\end{proof}
\begin{figure}[ht!]
\begin{center}
\includegraphics[width=0.47\textwidth]{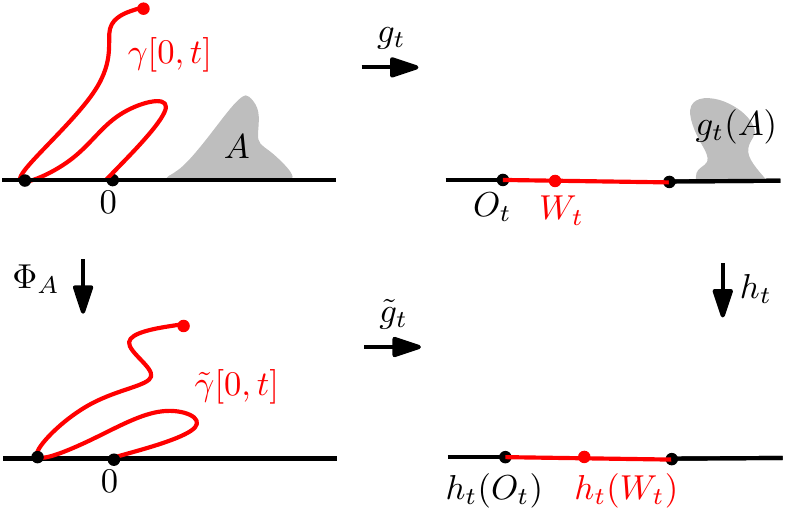}
\end{center}
\caption{\label{fig::sle8over3rho_exchange} The map $\Phi_A$ is the conformal map from $\HH\setminus A$ onto $\HH$ with $\Phi_A(0)=0$, $\Phi_A(\infty)=\infty$, and $\Phi_A(z)/z\to 1$ as $z\to\infty$. The map $g_t$ is the conformal map from $\HH\setminus \gamma[0,t]$ onto $\HH$ normalized at infinity. $\tilde{g}_t$ is the conformal map from $\HH\setminus\tilde{\gamma}[0,t]$ onto $\HH$ normalized at infinity. The map $h_t$ is the conformal map from $\HH\setminus g_t(A)$ onto $\HH$ such that $h_t\circ g_t=\tilde{g}_t\circ\Phi_A$.}
\end{figure}
\medbreak
\noindent\textbf{Setup for right-sided restriction property}
\medbreak
Let $\Omega^+$ be the collection of closed sets $K$ of $\bar{\HH}$ such that
\[K\cap \R=(-\infty,0],  K \text{ is connected and } \HH\setminus K \text{ is connected.}\]
Recall $\LA_c$ in Definition~\ref{def::collection_chordal}. Let $\LA_c^+$ denote the set of $A\in\LA_c$ such that $A\cap\R\subset(0,\infty)$. We endow $\Omega^+$ with the $\sigma$-field generated by the events $[K\in\Omega^+: K\cap A=\emptyset]$ where $A\in\LA_c^+$.
\begin{definition}
A probability measure $\PP$ on $\Omega^+$ is said to satisfy right-sided restriction property, if the following is true.
\begin{enumerate}
\item [(1)] For any $\lambda>0$, $\lambda K$ has the same law as $K$;
\item [(2)] For any $A\in\LA_c^+$, $\Phi_A(K)$ conditioned on $[K\cap A=\emptyset]$ has the same law as $K$.
\end{enumerate}\end{definition}
Similar to the proof of Theorem~\ref{thm::chordal_restriction}, we know that, if $\PP$ satisfies the right-sided restriction property, then there exists $\beta>0$ such that
\[\PP[K\cap A=\emptyset]=\Phi_A'(0)^{\beta},\quad\text{for all }A\in\LA_c^+.\]
\begin{remark}\label{rem::sle8over3rho_rightsided_restriction}
Theorem~\ref{thm::sle8over3rho_rightrestriction} states that SLE$_{8/3}(\rho)$ has the same law as the right boundary of the right-sided restriction sample with exponent $\beta$ which is related to $\rho$ through Equation (\ref{eqn::beta_rho_relation}). Note that
when $\rho$ spans $(-2,\infty)$, the quantity $\beta$ spans $(0,\infty)$. We could solve $\rho$ in terms of $\beta$ through Equation (\ref{eqn::beta_rho_relation}):
\begin{equation}\label{eqn::rho_beta_relation}\rho=\rho(\beta)=\frac{1}{3}(-8+2\sqrt{24\beta+1}).
\end{equation}
In particular, Theorem~\ref{thm::sle8over3rho_rightrestriction} also states the existence of right-sided restriction measure for all $\beta>0$.
\end{remark}
\begin{remark}
If $\beta\ge 5/8$, the right boundary of (two-sided) restriction measure $\PP(\beta)$ has the same law as SLE$_{8/3}(\rho)$ where $\rho=\rho(\beta)$ is given through Equation (\ref{eqn::rho_beta_relation}).
In particular, the right boundary of a Brownian excursion has the law of SLE$_{8/3}(2/3)$, the right boundary of the union of two independent Brownian excursions has the law of SLE$_{8/3}(2)$.
\end{remark}
\begin{remark} Recall Theorem~\ref{thm::ppp_be_restriction}, suppose $(e_j,j\in J)$ is a Poisson point process with intensity $\pi\beta\mu^{exc}_{\HH,\R_-}$, and set $\Sigma=\cup_je_j$, then the right boundary of $\Sigma$ has the same law as chordal SLE$_{8/3}(\rho)$ where $\rho=\rho(\beta)$ given by Equation (\ref{eqn::rho_beta_relation}) for all $\beta>0$.
\end{remark}
\begin{proof}[Proof of Theorem~\ref{thm::chordal_restriction}, $\PP(\beta)$ does not exist for $\beta<5/8$]
We prove by contradiction. Assume that the two-sided chordal restriction measure $\PP(\beta)$ exists for some $\beta<5/8$. Then the right boundary $\gamma$ of $K$ is SLE$_{8/3}(\rho)$ for $\rho=\rho(\beta)<0$ by Remark \ref{rem::sle8over3rho_rightsided_restriction}. 

On the one hand, the two-sided chordal restriction sample $K$ is symmetric with respect to the imaginary axis, thus the probability of $i$ staying to the right of $\gamma$ is less than $1/2$.
On the other hand, since $\rho<0$, the probability of $i$ staying to the right of $\gamma$ is strictly larger than the probability of $i$ staying to the right of SLE$_{8/3}$ which equals $1/2$, since SLE$_{8/3}$ is symmetric with respect to the imaginary axis and it is a simple continuous curve.
These two facts give us a contradiction.
\end{proof}
\subsection{Construction of $\PP(\beta)$ for $\beta>5/8$}
In the previous definition of SLE$_{\kappa}(\rho)$ process, there is a repulsion (when $\rho>0$) or attraction (when $\rho<0$) from $\R_-$. We will denote this process by SLE$^L_{\kappa}(\rho)$. Symmetrically, we denote by SLE$^R_{\kappa}(\rho)$ the same process only except that the repulsion or attraction is from $\R_+$. Namely, SLE$^R_{\kappa}(\rho)$ is the Loewner chain driven by $W$ which is the solution to the following SDE:
\begin{equation}
dW_t=\sqrt{\kappa}dB_t+\frac{\rho dt}{W_t-O_t},\quad dO_t=\frac{2dt}{O_t-W_t}, \quad W_0=O_0=0, \quad O_t\ge W_t.\end{equation}
Please compare it with Equation (\ref{eqn::slekapparho_sde}) and note that the only difference is $O_t\ge W_t$.  The process 
SLE$^R_{\kappa}(\rho)$ can be viewed as the image of SLE$^L_{\kappa}(\rho)$ under the reflection with respective to the imaginary axis.

From Theorem~\ref{thm::sle8over3rho_rightrestriction}, we know that SLE$^L_{8/3}(\rho)$ satisfies right-sided restriction property, thus similarly SLE$^R_{8/3}(\rho)$ satisfies left-sided restriction property. The idea to construct $K$ whose law is $\PP(\beta)$ for $\beta>5/8$ is the following: we first run an SLE$^L_{8/3}(\rho)$ as the right-boundary of $K$, and then given the right boundary, we run the left boundary according to the conditional law.

\begin{proposition}
Fix $\beta>5/8$, and $\rho=\rho(\beta)>0$ where $\rho(\beta)$ is given by Equation (\ref{eqn::rho_beta_relation}). Suppose $\gamma^R$ is a chordal SLE$^L_{8/3}(\rho)$ process in $\overline{\HH}$ from $0$ to $\infty$. Given $\gamma^R$, in the left-connected component of $\HH\setminus\gamma^R$, sample an SLE$^R_{8/3}(\rho-2)$ from $0$ to $\infty$ which is denoted by $\gamma^L$. Let $K$ be the closure of the union of the domains between $\gamma^L$ and $\gamma^R$. Then $K$ has the law of $\PP(\beta)$.
\end{proposition}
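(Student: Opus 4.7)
The plan is to verify the characterization in Theorem~\ref{thm::chordal_restriction}(1): it suffices to show $\PP[K\cap A=\emptyset]=\Phi'_A(0)^{\beta}$ for every $A\in\LA_c$. The starting geometric observation is that, because $K$ is the closure of the region bounded on the right by $\gamma^R$ and on the left by $\gamma^L$, for any connected $A\in\LA_c$ one has the identity
\[
\{K\cap A=\emptyset\}=\{\gamma^R\cap A=\emptyset\}\cap\{\gamma^L\cap A=\emptyset\},
\]
with $A$ sitting in one of the two exterior components of $\HH\setminus(\gamma^L\cup\gamma^R)$.

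Next I would condition on $\gamma^R$ and use the tower property to write
\[
\PP[K\cap A=\emptyset]=\E\bigl[\mathbf{1}_{\gamma^R\cap A=\emptyset}\,Q(\gamma^R,A)\bigr],\quad Q(\gamma^R,A):=\PP[\gamma^L\cap A=\emptyset\mid\gamma^R].
\]
On $\{\gamma^R\cap A=\emptyset\}$, the connected set $A$ is contained either in the right component $D^R$ of $\HH\setminus\gamma^R$ (in which case $\gamma^L\subset D^L$ is automatically disjoint from $A$ and $Q=1$), or in the left component $D^L$, where necessarily $A\cap\R\subset\R_-$. In the latter case, let $\psi:D^L\to\HH$ be the normalized conformal map fixing $0$ and $\infty$; by the conformal invariance of SLE$^R_{8/3}(\rho-2)$, $\psi(\gamma^L)$ has the law of an SLE$^R_{8/3}(\rho-2)$ in $\HH$ and $\psi(A)\cap\R\subset\R_-$. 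Applying the left-sided analog of Theorem~\ref{thm::sle8over3rho_rightrestriction}, obtained by reflecting across the imaginary axis, I obtain
\[
Q(\gamma^R,A)=\Phi'_{\psi(A)}(0)^{\beta(\rho-2)},\qquad \beta(\rho'):=\frac{3\rho'^{2}+16\rho'+20}{32}.
\]

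It then remains to show $\E\bigl[\mathbf{1}_{\gamma^R\cap A=\emptyset}\,Q(\gamma^R,A)\bigr]=\Phi'_A(0)^{\beta}$, and I would establish this via a martingale argument along the SLE$^L_{8/3}(\rho)$ filtration of $\gamma^R$, directly extending the proof of Theorem~\ref{thm::sle8over3rho_rightrestriction}. Parameterizing $\gamma^R$ by half-plane capacity and using the conformal map $h_t$ of $g_t(A)$, I would build a process of the form
\[
N_t:=h'_t(W_t)^{5/8}\,h'_t(O_t)^{\rho(3\rho+4)/32}\left(\frac{h_t(W_t)-h_t(O_t)}{W_t-O_t}\right)^{3\rho/8}\cdot\Psi_t,
\]
where $\Psi_t$ is the ``left-side'' factor encoding the conditional restriction probability at time $t$ for the yet-to-be-drawn $\gamma^L$ against the pushforward $g_t$ of the left-hand portion of $A$, written as a power of an appropriate conformal derivative. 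An It\^o computation should show that $N_t$ is a bounded local martingale with $N_0=\Phi'_A(0)^{\beta}$ and $N_\infty=\mathbf{1}_{K\cap A=\emptyset}$, so that optional stopping yields the desired formula.

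The main obstacle is the joint It\^o verification for $N_t$: one must check that the drift of the right-side martingale from Theorem~\ref{thm::sle8over3rho_rightrestriction} and the drift of the left-side factor $\Psi_t$ cancel exactly. The specific shift $\rho\mapsto\rho-2$ in the law of $\gamma^L$ is precisely what makes this cancellation work: it corresponds to adding a second force point of weight $\rho-2$ on the opposite side of $W_t$. In effect, $(\gamma^R,\gamma^L)$ is realized as the right/left boundary pair of a single two-force-point SLE$_{8/3}(\rho,\rho-2)$, and the commutation/compatibility of the two interacting SLE components for $\kappa=8/3$ is the heart of the matter; the exponent $\beta$ emerges at $t=0$ through the coincidence $h_0=\Phi_A$, $W_0=O_0=0$ together with the contribution of $\Psi_0$.
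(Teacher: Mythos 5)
Your opening moves are right and agree with the paper's proof: reduce to checking $\PP[K\cap A=\emptyset]=\Phi'_A(0)^\beta$; observe it is automatic for $A\in\LA_c^+$ since $\gamma^R$ alone satisfies the right-sided restriction property; for $A$ with $A\cap\R\subset(-\infty,0)$ condition on $\gamma^R$, use conformal invariance of SLE$^R_{8/3}(\rho-2)$ in the left component, and apply the reflected version of Theorem~\ref{thm::sle8over3rho_rightrestriction} to get $Q(\gamma^R,A)=\Phi'_{\psi(A)}(0)^{\beta(\rho-2)}$ with $\beta(\rho-2)=\rho(3\rho+4)/32$. That identity $\beta(\rho-2)=\rho(3\rho+4)/32$ is precisely the point, and you should notice that it is \emph{already} the exponent on $h'_t(O_t)$ in the martingale $M_t$ used in the proof of Theorem~\ref{thm::sle8over3rho_rightrestriction}.

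This is where your proposal overshoots and, as written, goes wrong. There is no need for an extra factor $\Psi_t$ or a fresh It\^{o} computation. The process
\[
M_t=h'_t(W_t)^{5/8}\,h'_t(O_t)^{\rho(3\rho+4)/32}\left(\frac{h_t(W_t)-h_t(O_t)}{W_t-O_t}\right)^{3\rho/8}
\]
is already an $\mathcal{F}^{\gamma^R}_t$-bounded martingale (bounded because $\rho>0$ gives $M_t\le h'_t(W_t)^\beta\le 1$); one only has to identify its a.s.\ limit. On $\{T<\infty\}$, $h'_t(W_t)\to 0$ so $M_t\to 0$. On $\{T=\infty\}$, the first and third factors tend to $1$, while $h'_t(O_t)^{\rho(3\rho+4)/32}\to\Phi'_{\psi(A)}(0)^{\beta(\rho-2)}=Q(\gamma^R,A)$. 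Hence $\E[M_\infty]=\E[\mathbf{1}_{T=\infty}\,Q(\gamma^R,A)]=\PP[K\cap A=\emptyset]$, and optional stopping gives this equal to $M_0=\Phi'_A(0)^\beta$. Your proposed $N_t=M_t\cdot\Psi_t$ is either not $\mathcal{F}^{\gamma^R}_t$-adapted (if $\Psi_t$ is meant to ``see'' $\gamma^L$) or, if $\Psi_t$ is the conditional left-side probability given $\gamma^R[0,t]$, then $N_t$ converges on $\{T=\infty\}$ to $Q(\gamma^R,A)^2$ rather than $Q(\gamma^R,A)$ — the factor $h'_t(O_t)^{\rho(3\rho+4)/32}$ already carries the left-side contribution, so appending $\Psi_t$ double-counts it. Drop $\Psi_t$, identify the limit of $M_t$ as above, and the argument closes with no extra drift computation.
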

\begin{proof}
We only need to check, for all $A\in\LA_c$,
\[\PP[K\cap A=\emptyset]=\Phi_A'(0)^{\beta}.\]
Since $\gamma^R$ is an SLE$_{8/3}^L(\rho)$ process and it satisfies the right-sided restriction property by Theorem \ref{thm::sle8over3rho_rightrestriction}, we know that this is true for $A\in\LA_c^+$. We only need to prove it for $A\in\LA_c$ such that $A\cap\R\subset (-\infty,0)$. Let $(g_t,t\ge 0)$ be the solution of the Loewner chain for the process $\gamma^R$ and $(O_t,W_t,t\ge 0)$ be the solution of the SDE (\ref{eqn::slekapparho_sde}). Set $T=\inf\{t: \gamma^R(t)\in A\}$. For $t<T$, let $h_t$ be the conformal map from $\HH\setminus g_t(A)$ onto $\HH$ normalized at $\infty$. See Figure \ref{fig::chordalrestriction_construction}. Recall that
\[M_t=h_t'(W_t)^{5/8}h_t'(O_t)^{\rho(3\rho+4)/32}\left(\frac{h_t(W_t)-h_t(O_t)}{W_t-O_t}\right)^{3\rho/8}\]
is a local martingale, and that, since $h_t'$ is increasing on $[O_t, \infty)$, 
\[0\le h_t'(O_t)\le\frac{h_t(W_t)-h_t(O_t)}{W_t-O_t}\le h_t'(W_t)\le 1.\]
Since $\rho>0$, we have that $M_t\le h_t'(W_t)^{\beta}\le 1$ and thus $M$ is a bounded martingale.

If $T<\infty$, then \[h_t'(W_t)\to 0, \quad \text{and } M_t\to 0 \text{ as } t\to T.\]

If $T=\infty$, then \[h_t'(W_t)\to 1,\quad \frac{h_t(W_t)-h_t(O_t)}{W_t-O_t}\to 1,\]
and we have (apply Theorem \ref{thm::sle8over3rho_rightrestriction} to SLE$_{8/3}^R(\rho-2)$)
\[h_t'(O_t)^{\rho(3\rho+4)/32}\to \PP[\gamma^L\cap A=\emptyset\cond \gamma^R]\text{ as }t\to\infty.\]

Thus,
\[\PP[K\cap A=\emptyset]=E[1_{T=\infty}\E[1_{K\cap A=\emptyset}\cond \gamma^R]]=E[M_T]=M_0.\]
\end{proof}
\begin{figure}[ht!]
\begin{center}
\includegraphics[width=0.47\textwidth]{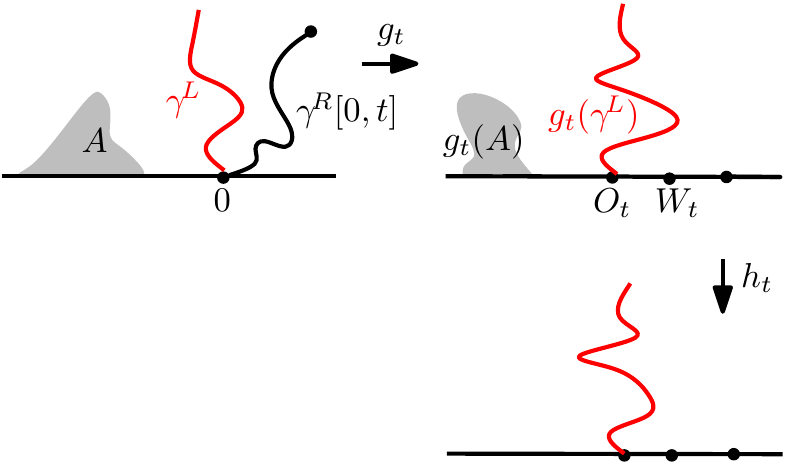}
\end{center}
\caption{\label{fig::chordalrestriction_construction} The map $g_t$ is the conformal map from $\HH\setminus\gamma^R[0,t]$ onto $\HH$ normalized at $\infty$. The map $h_t$ is the conformal map from $\HH\setminus g_t(A)$ onto $\HH$ normalized at $\infty$.}
\end{figure}

\subsection{Half-plane intersection exponents $\tilde{\xi}(\beta_1,...,\beta_p)$}\label{subsec::restriction_exponents_chordal}
Recall that
\[\tilde{\xi}(\beta_1,...,\beta_p)=\frac{1}{24}\left((\sqrt{24\beta_1+1}+\cdots+\sqrt{24\beta_p+1}-(p-1))^2-1\right),\]
and define
\begin{equation}\label{eqn::hatxi}
\hat{\xi}(\beta_1,...,\beta_p)=\tilde{\xi}(\beta_1,...,\beta_p)-\beta_1-\cdots-\beta_p.
\end{equation}
We will see in Proposition \ref{prop::disjoint_exponent} that, the function $\hat{\xi}$ is the exponent for the restriction samples to avoid each other, or the exponent for ``non-intersection". 

For $x\in\C$ and a subset $K\subset\C$, denote
\[x+K=\{x+z:z\in K\}.\]
\begin{proposition}\label{prop::disjoint_exponent}
Suppose $K_1,...,K_p$ are $p$ independent chordal restriction samples with exponents $\beta_1,...,\beta_p\ge 5/8$ respectively. Fix $R>0$. Let $\eps>0$ be small. Set $x_j=j\eps$ for $j=1,...,p$. Then, as $\eps\to 0$,
\[\PP[(x_{j_1}+K_{j_1}\cap\U(0,R))\cap(x_{j_2}+K_{j_2}\cap\U(0,R))=\emptyset, 1\le j_1<j_2\le p]\approx \eps^{\hat{\xi}(\beta_1,...,\beta_p)}.\]
\end{proposition}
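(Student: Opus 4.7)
The plan is to reduce the statement to a large-scale version and then pin down the exponent through the chordal restriction identity. By the scale-invariance of each $K_j$, applying the dilation $z\mapsto z/\eps$ transforms the displayed probability into
\[q(L):=\PP\bigl[(j+K_j)\cap(j'+K_{j'})\cap \U(0,L)=\emptyset,\ 1\le j_1<j_2\le p\bigr],\]
with $L:=R/\eps$. It thus suffices to show $q(L)\approx L^{-\hat\xi(\beta_1,\ldots,\beta_p)}$ as $L\to\infty$.

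For the power-law shape of $q$, I would first establish approximate multiplicativity $q(L_1L_2)\asymp q(L_1)\,q(L_2)$ by conditioning on the truncations of the $K_j$'s at their first exit of $\U(0,L_1)$: on the non-intersection event at scale $L_1$, the samples are macroscopically separated, and the restriction property combined with scale invariance allows one to couple the continuations from scale $L_1$ to scale $L_1 L_2$ with an independent rescaled copy of the problem (the upper bound from a direct decomposition, the lower bound from a ``nice separation'' event at scale $L_1$ of positive probability). A Fekete-type argument then gives $q(L)\approx L^{-c}$ for some $c\ge 0$.

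To pin down $c$, I would apply the chordal restriction identity with a ``test" hull $A\in\LA_c$ placed far from the origin (say on $\R_+$ at scale $M\gg L$). By independence of the $K_j$'s and the translation form of Theorem~\ref{thm::chordal_restriction},
\[\PP\bigl[\cup_j (j+K_j) \cap A=\emptyset\bigr]=\prod_{j=1}^p \Phi_A'(j)^{\beta_j}.\]
Decomposing the event on the left over the non-intersection event $E_L$ inside $\U(0,L)$, and using that the filled-in union conditioned on $E_L$ is distributed, outside $\U(0,L)$ and in the $L\to\infty$ limit, as a chordal restriction sample with combined exponent $\tilde\xi(\beta_1,\ldots,\beta_p)$ --- this is the defining content of the half-plane intersection exponent in the restriction framework, and is the cascade relation made quantitative --- one extracts the factor $\Phi_A'(0)^{\tilde\xi}$ from the conditional-avoidance probability, while $\prod_j\Phi_A'(j)^{\beta_j}\to\Phi_A'(0)^{\sum_j\beta_j}$ in the appropriate asymptotic regime. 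Matching the $L$-, $M$-dependencies of both sides then forces $c=\tilde\xi(\beta_1,\ldots,\beta_p)-\sum_j\beta_j=\hat\xi$.

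The main obstacle lies in the middle identification: proving quantitatively that conditioning $p$ independent restriction samples on non-intersection produces, in the appropriate limit, a restriction sample of exponent $\tilde\xi(\beta_1,\ldots,\beta_p)$. This uses the cascade and commutation relations listed in Section~\ref{sec::introduction} together with the chordal restriction characterization of Theorem~\ref{thm::chordal_restriction}, and requires a careful separation-of-scales argument to decouple the crowded near-origin behavior (responsible for $\hat\xi$) from the dilute far-field behavior (where the union resembles a single restriction sample with exponent $\tilde\xi$). A robust cross-check is to verify $\hat\xi$ directly in the Brownian-excursion case $\beta_j\in\N$ using classical BM intersection exponents, then invoke analyticity in the $\beta_j$'s to extend to all $\beta_j\ge 5/8$.
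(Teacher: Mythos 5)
Your proposal follows a genuinely different route from the paper, and it contains a circularity that undermines the key step. The paper reduces Proposition~\ref{prop::disjoint_exponent} to the case $p=2$ by induction, and then computes the exponent directly via Lemma~\ref{lem::estimate_sle8over3_restriction}: one identifies the right boundary of one restriction sample with a chordal SLE$_{8/3}(\rho)$ process (using Theorem~\ref{thm::sle8over3rho_rightrestriction} and Remark~\ref{rem::sle8over3rho_rightsided_restriction}), conditions on that curve, writes the avoidance probability as $g_t'(-\eps)^{\beta}$ by the right-sided restriction property of the other sample, and compares with an explicit local martingale $M_t$ whose initial value $M_0=\eps^{\frac{3}{16}\bar\rho(\rho+2)}$ reads off the exponent. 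This is a direct SLE computation and never appeals to Theorem~\ref{thm::halfplane_exponent}.

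Your step of pinning down the constant $c$ instead invokes precisely the statement that ``the filled-in union conditioned on non-intersection is, in the limit, a restriction sample with exponent $\tilde\xi(\beta_1,\dots,\beta_p)$.'' But that is Theorem~\ref{thm::halfplane_exponent}, and in the paper the proof of Theorem~\ref{thm::halfplane_exponent} \emph{uses} Proposition~\ref{prop::disjoint_exponent} (to estimate the ratio $\PP[(K_1\cap\U(0,R)+\Phi_A(\eps))\cap\cdots]/\PP[(K_1\cap\U(0,R)+\eps)\cap\cdots]\approx(\Phi_A(\eps)/\eps)^{\hat\xi}$). So your argument is circular: the ``middle identification'' you flag as the main obstacle is exactly the downstream theorem, not an available input. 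The cascade relation from Section~\ref{sec::introduction} is a known functional identity for $\tilde\xi$, but it does not by itself tell you that conditioning the restriction samples produces another restriction sample with a computable exponent --- that is the content which must be proved, and in this framework it is proved \emph{after} Proposition~\ref{prop::disjoint_exponent}. Separately, the Fekete-type subadditivity step is plausible in spirit but needs a careful separation-of-scales lemma (non-intersection at scale $L_1$ does not by itself give macroscopic separation), and your suggested cross-check via Brownian excursions and analyticity is a consistency check rather than a proof. To repair the argument you would need to identify $c$ by a method independent of Theorem~\ref{thm::halfplane_exponent} --- for instance the SLE$_{8/3}(\rho)$ martingale computation the paper actually uses.
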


In the following theorem, we will consider the law of $K_1,...,K_p$ conditioned on ``non-intersection". Since the event of ``non-intersection" has zero probability, we need to explain the precise meaning: the conditioned law would be obtained through a limiting procedure: first consider the law of $K_1,...,K_p$ conditioned on
\[[(x_{j_1}+K_{j_1}\cap\U(0,R))\cap(x_{j_2}+K_{j_2}\cap\U(0,R))=\emptyset, 1\le j_1<j_2\le p],\]
and then let $R\to\infty$ and $\eps\to 0$.
\begin{theorem}\label{thm::halfplane_exponent}
Fix $\beta_1,...,\beta_p\ge 5/8$. Suppose $K_1,...,K_p$ are $p$ independent chordal restriction samples with exponents $\beta_1,...,\beta_p$ respectively. Then the ``fill-in" of the union of these $p$ sets conditioned on ``non-intersection" has the same law as chordal restriction sample of exponent $\tilde{\xi}({\beta_1,...,\beta_p})$.
\end{theorem}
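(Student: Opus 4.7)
I plan to apply the characterization from Theorem~\ref{thm::chordal_restriction}(1): writing $\widehat K$ for the limiting fill-in, it suffices to check (a) that $\widehat K$ is scale invariant, and (b) that
\begin{equation*}
\PP[\widehat K\cap A=\emptyset]=\Phi_A'(0)^{\tilde\xi(\beta_1,\ldots,\beta_p)}\quad\text{for every }A\in\LA_c.
\end{equation*}
Part (a) is automatic: a scaling by $\lambda$ turns the parameters $(\eps,R)$ of the approximating conditional law into $(\lambda\eps,\lambda R)$, which yields the same object in the double limit $R\to\infty$, $\eps\to 0$.

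For (b), fix $A\in\LA_c$, set $F_\eps:=\{(x_j+K_j)\cap A=\emptyset\text{ for all }j\}$, and let $E_{\eps,R}$ be the non-intersection event of Proposition~\ref{prop::disjoint_exponent}. By independence and the single-sample restriction property (for each $K_j$ translated by $x_j$),
\begin{equation*}
\PP[F_\eps]=\prod_{j=1}^p \Phi_A'(x_j)^{\beta_j}\;\xrightarrow[\eps\to 0]{}\;\Phi_A'(0)^{\beta_1+\cdots+\beta_p}.
\end{equation*}
By conformal restriction, conditional on $F_\eps$ the images $(\Phi_A(x_j+K_j))_{j=1}^p$ are independent chordal restriction samples of exponents $\beta_j$ started from $\Phi_A(x_j)=j\eps\,\Phi_A'(0)+O(\eps^2)$, and $E_{\eps,R}$ corresponds to their non-intersection inside $\Phi_A(\U(0,R))$. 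Writing $q(\eta,R)$ for the non-intersection probability of fresh samples at spacing $\eta$ inside $\U(0,R)$, which by scale invariance equals $q(1,R/\eta)$, Bayes gives
\begin{equation*}
\PP[F_\eps\mid E_{\eps,R}]=\PP[F_\eps]\cdot\frac{\PP[E_{\eps,R}\mid F_\eps]}{\PP[E_{\eps,R}]}\;\approx\;\Phi_A'(0)^{\sum_j\beta_j}\cdot\frac{q(1,R/(\eps\,\Phi_A'(0)))}{q(1,R/\eps)}.
\end{equation*}
Proposition~\ref{prop::disjoint_exponent} gives $q(1,M)\approx M^{-\hat\xi(\beta_1,\ldots,\beta_p)}$ as $M\to\infty$, so the ratio tends to $\Phi_A'(0)^{\hat\xi(\beta_1,\ldots,\beta_p)}$ and the whole expression to $\Phi_A'(0)^{\tilde\xi(\beta_1,\ldots,\beta_p)}$ by~\eqref{eqn::hatxi}.

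The principal obstacle is that Proposition~\ref{prop::disjoint_exponent} is stated only in the logarithmic ``$\approx$'' sense, whereas the argument extracts an \emph{exact} multiplicative constant from the ratio of two non-intersection probabilities at nearby scales. What one actually needs is the quasi-multiplicativity
\begin{equation*}
q(1,M)/q(1,\lambda M)\longrightarrow\lambda^{\hat\xi(\beta_1,\ldots,\beta_p)}\quad\text{as }M\to\infty
\end{equation*}
for every fixed $\lambda>0$: a separation-of-arms/coupling refinement underlying the proof of Proposition~\ref{prop::disjoint_exponent} but not made explicit there. Secondary technicalities to dispatch are: replacing the conformally distorted window $\Phi_A(\U(0,R))$ by a standard disk in the limit $R\to\infty$, harmless because $\Phi_A(z)-z=O(1/z)$ at infinity; verifying that the fill-in $\widehat K$ satisfies the structural conditions of a chordal restriction sample ($\widehat K\cap\R=\{0\}$, connected and unbounded, $\HH\setminus\widehat K$ with two components), inherited from the geometry of each $K_j$ and the clustering of their starting points at $0$; and upgrading the pointwise convergence of $\PP[\widehat K\cap A=\emptyset\mid E_{\eps,R}]$ on generating events $A\in\LA_c$ to a genuine weak limit under the double limit $(R\to\infty,\eps\to 0)$.
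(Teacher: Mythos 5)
Your argument is essentially the paper's own: both use Bayes to reduce the conditional avoidance probability to the product $\Phi_A'(0)^{\sum\beta_j}$ times a ratio of non-intersection probabilities, use the restriction property to identify the conditional laws of $\Phi_A(x_j+K_j)$ with fresh samples at shifted positions, and feed Proposition~\ref{prop::disjoint_exponent} into that ratio to extract $\Phi_A'(0)^{\hat\xi}$. The quasi-multiplicativity refinement you flag is a genuine point, but it is exactly the same level of rigor the paper itself allows: the proof of Lemma~\ref{lem::estimate_sle8over3_restriction} (and hence of Proposition~\ref{prop::disjoint_exponent}) is explicitly only a sketch of the exponent computation, so the paper's proof of this theorem carries the same caveat that the ``$\approx$'' asymptotics have not been upgraded to sharp ratios.
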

For Proposition \ref{prop::disjoint_exponent} and Theorem \ref{thm::halfplane_exponent}, we only need to show the results for $p=2$ and other $p$ can be proved by induction. Proposition \ref{prop::disjoint_exponent} for $p=2$ is a direct consequence of the following lemma.
\begin{lemma}\label{lem::estimate_sle8over3_restriction}
Suppose $K$ is a right-sided restriction sample with exponent $\beta>0$. Let $\gamma$ be an independent chordal SLE$^R_{8/3}(\rho)$ process for some $\rho>-2$. Fix $t>0$ and let $\eps>0$ be small, we have
\[\PP[\gamma[0,t]\cap (K-\eps)=\emptyset]\approx\eps^{\frac{3}{16}\bar{\rho}(\rho+2)}\quad\text{as }\eps\to 0\]
where \[\bar{\rho}=\frac{2}{3}(\sqrt{24\beta+1}-1).\]

Note that, if $\beta_1=\beta$, $\beta_2=(3\rho^2+16\rho+20)/32$, we have
\[\frac{3}{16}\bar{\rho}(\rho+2)=\hat{\xi}(\beta_1,\beta_2).\]
\end{lemma}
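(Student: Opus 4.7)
The strategy is to reduce the probability to a Loewner expectation using the right-sided restriction of $K$, and then extract the leading exponent via a specific SLE martingale.

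First, condition on $\gamma$. The set $A := \gamma[0,t]+\eps$ lies in $\LA_c^+$ (since $A\cap\R\subset[\eps,\infty)$ and $\HH\setminus A$ is simply connected). The right-sided restriction property of $K$ with parameter $\beta$ applied to $A$ gives $\PP[K\cap A=\emptyset\mid\gamma]=\Phi_A'(0)^\beta$. Writing $g_t$ for the Loewner map from $\HH\setminus\gamma[0,t]$ onto $\HH$ normalized at infinity, the translation argument $\Phi_A(z)=g_t(z-\eps)-g_t(-\eps)$ yields $\Phi_A'(0)=g_t'(-\eps)$. Hence
\[ f(\eps):=\PP[\gamma[0,t]\cap(K-\eps)=\emptyset]=\E\bigl[g_t'(-\eps)^\beta\bigr]. \]

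Set $U_s=W_s-g_s(-\eps)$ and $Z_s=O_s-W_s$. Standard Loewner computation gives
\[ dU_s=\Bigl(\frac{2}{U_s}-\frac{\rho}{Z_s}\Bigr)\,ds+\sqrt{8/3}\,dB_s,\qquad dZ_s=\frac{2+\rho}{Z_s}\,ds-\sqrt{8/3}\,dB_s, \]
with $(U_0,Z_0)=(\eps,0)$ and $g_s'(-\eps)=\exp(-\int_0^s 2/U_u^2\,du)$. Motivated by scale invariance, search for a local martingale
\[ M_s=g_s'(-\eps)^\beta\,(U_s+Z_s)^d\,\phi\!\left(\frac{U_s}{U_s+Z_s}\right). \]
Passing to $X=U/(U+Z)\in(0,1)$ and $Y=U+Z$, It\^o's formula reduces the drift-zero condition on $M$ to the Fuchsian ODE
\[ \frac{4}{3}\phi''(X)+\frac{2-(4+\rho)X}{X(1-X)}\phi'(X)+\Bigl[-\frac{2\beta}{X^2}+\frac{2d}{X(1-X)}\Bigr]\phi(X)=0. \]

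The indicial exponents are $\mu_\pm=(-1\pm s_1)/4$ at $X=0$ (with $s_1:=\sqrt{1+24\beta}$) and $\{0,-(2+3\rho)/4\}$ at $X=1$. The substitution $\phi(X)=X^{\mu_+}\psi(X)$ reduces the ODE to a standard hypergeometric equation, and the unique value of $d$ for which the solution regular at $X=1$ is also the regular branch at $X=0$ (i.e.\ pure $X^{\mu_+}$ and not containing $X^{\mu_-}$) is extracted from the hypergeometric connection formulas to be
\[ d=\frac{(s_1-1)(\rho+2)}{8}=\frac{3}{16}\bar\rho(\rho+2)=\hat\xi\bigl(\beta,(3\rho^2+16\rho+20)/32\bigr). \]
For this $d$ one checks that $\phi$ is positive and bounded on $(0,1]$ with $\phi(1)=1$, so $M_0=\eps^d$.

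Since $g_s'(-\eps)\le 1$ and $Z$ is Bessel-type of dimension $1+3(2+\rho)/4>1$, the process $(U,Z)$ started from $(\eps,0)$ converges in law as $\eps\to 0$ to the analogous process started from $(0,0)$; this gives pathwise and moment bounds showing that $M$ is a uniformly integrable martingale and that $(U_t+Z_t)^d\phi(X_t)$ concentrates around a positive random variable with finite moments independent of $\eps$ in the limit. Optional stopping yields $\E[g_t'(-\eps)^\beta(U_t+Z_t)^d\phi(X_t)]=\eps^d$, and two-sided logarithmic estimates on this identity give
\[ f(\eps)\approx \eps^d=\eps^{(3/16)\bar\rho(\rho+2)}. \]
The main obstacle is the hypergeometric connection analysis that pins down the precise $d=\hat\xi(\beta,\beta_2)$ and confirms positivity of $\phi$; a secondary technical point is verifying that the correlation between $g_t'(-\eps)^\beta$ and the geometric factor $(U_t+Z_t)^d\phi(X_t)$ does not affect the logarithmic exponent.
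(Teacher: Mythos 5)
Your reduction to $f(\eps)=\E[g_t'(-\eps)^\beta]$, the SDE for $(U,Z)$, and the final exponent all agree with the paper, so the answer is correct. But the route you take to find the martingale is genuinely different from, and more complicated than, the paper's. The paper simply writes down an explicit local martingale
\[
M_t=g_t'(-\eps)^{\beta}\,(W_t-g_t(-\eps))^{3\bar\rho/8}\,(O_t-g_t(-\eps))^{3\bar\rho\rho/16},
\]
verifies drift vanishing by It\^o, and reads off $M_0=\eps^{3\bar\rho(\rho+2)/16}$. In your notation this is exactly the ansatz with $\phi(X)=X^{\mu_+}$, $\mu_+=(s_1-1)/4=3\bar\rho/8$. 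Plugging a pure power $\phi=X^{\mu}$ into your Fuchsian ODE and clearing the factor $X^{\mu-2}/(1-X)$ leaves a degree-one polynomial in $X$ that must vanish identically; the constant term gives the indicial equation $2\mu^2+\mu-3\beta=0$ (so $\mu\in\{\mu_\pm\}$), and the linear term gives $d=(\rho+2)\mu/2$. Taking $\mu=\mu_+$ forces $d=\tfrac{3}{16}\bar\rho(\rho+2)$. That two-line algebra replaces the entire hypergeometric connection analysis.

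This also exposes a logical wobble in your argument: the requirement ``regular at $X=1$ and pure $X^{\mu_+}$ at $X=0$'' is a Sturm--Liouville-type condition with $d$ as the spectral parameter, so it is \emph{not} automatically satisfied by a unique $d$ — there is in general a discrete family. The relevant $d$ is the smallest one, selected by demanding $\phi>0$ on $(0,1]$ (the ground state), which is what makes $M$ a positive martingale and makes the $\E[g_t'(-\eps)^\beta]\approx\E[M_t]$ step even plausible. You do verify positivity, but present it as a consistency check rather than as the selection criterion; stated as a uniqueness claim it is unjustified. On the final step (uniform integrability and showing that the factor $(U_t+Z_t)^d\phi(X_t)$ does not affect the logarithmic exponent), you are at the same level of rigor as the paper, which also only sketches this and defers details.
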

\begin{proof}
Let $(g_t,t\ge 0)$ be the Loewner chain for $\gamma$ and $(O_t,W_t)$ be the solution to the SDE. Precisely,
\[\partial_tg_t(z)=\frac{2}{g_t(z)-W_t},\quad g_0(z)=z;\]
\[dW_t=\sqrt{\kappa}dB_t+\frac{\rho dt}{W_t-O_t},\quad dO_t=\frac{2dt}{O_t-W_t}, \quad W_0=O_0=0, \quad O_t\ge W_t.\]
Given $\gamma[0,t]$, since $K$ satisfies right-sided restriction property, we have that
\[\PP[\gamma[0,t]\cap (K-\eps)=\emptyset\cond \gamma[0,t]]=g_t'(-\eps)^{\beta}.\]
Define
\[M_t=g_t'(-\eps)^{\bar{\rho}(3\bar{\rho}+4)/32}(W_t-g_t(-\eps))^{3\bar{\rho}/8}(O_t-g_t(-\eps))^{3\bar{\rho}\rho/16}.\]
One can check that $M$ is a local martingale and $\beta=\bar{\rho}(3\bar{\rho}+4)/32$. Thus
\begin{align*}
\PP[(K-\eps)\cap\gamma[0,t]=\emptyset]&=\E\left[\PP[\gamma[0,t]\cap (K-\eps)=\emptyset\cond \gamma[0,t]]\right]\\
&=\E[g_t'(-\eps)^{\beta}]\\
&\approx\E[M_t]\\
&=M_0=\eps^{\frac{3}{16}\bar{\rho}(\rho+2)}.
\end{align*}
In this equation, the sign $\approx$ means that the ratio $\E[g_t'(-\eps)^{\beta}]/\E[M_t]$ corresponds to $\eps^{err}$ where the error term in the exponent $err$ goes to zero as $\eps$ goes to zero. In fact, we need more work to make this precise, we only show the key idea that how we get the correct exponent, and the details are left to interested readers.
\end{proof}
\begin{proof}[Proof of Theorem \ref{thm::halfplane_exponent}.]
Assume $p=2$. For any $A\in\LA_c$, we need to estimate the following probability for $\eps>0$ small:
\[\PP[K_1\cap A=\emptyset, K_2\cap A=\emptyset\cond (K_1\cap\U(0,R)+\eps)\cap(K_2\cap\U(0,R)+2\eps)=\emptyset].\]
For $i=1,2$, since $K_i$ satisfies chordal conformal restriction property, we know that the probability of $\{K_i\cap A=\emptyset\}$ is $\Phi_A'(0)^{\beta_i}$. Thus 
\begin{align*}
\PP&[K_1\cap A=\emptyset, K_2\cap A=\emptyset\cond (K_1\cap\U(0,R)+\eps)\cap(K_2\cap\U(0,R)+2\eps)=\emptyset]\\
&=\frac{\PP[K_1\cap A=\emptyset, K_2\cap A=\emptyset, (K_1\cap\U(0,R)+\eps)\cap(K_2\cap\U(0,R)+2\eps)=\emptyset]}{\PP[(K_1\cap\U(0,R)+\eps)\cap(K_2\cap\U(0,R)+2\eps)=\emptyset]}\\
&=\frac{\PP[(K_1\cap\U(0,R)+\eps)\cap(K_2\cap\U(0,R)+2\eps)=\emptyset\cond K_1\cap A=\emptyset, K_2\cap A=\emptyset]}{\PP[(K_1\cap\U(0,R)+\eps)\cap(K_2\cap\U(0,R)+2\eps)=\emptyset]}\\
&\quad \times\Phi_A'(0)^{\beta_1}\Phi_A'(0)^{\beta_2}.
\end{align*}
For $i=1,2$, conditioned on $[K_i\cap A=\emptyset]$, the conditional law of $\Phi_A(K_i)$ has the same law as $K_i$. Combining this with Proposition \ref{prop::disjoint_exponent}, we have
\begin{align*}
\lim_{R\to\infty}&
\frac{\PP[(K_1\cap\U(0,R)+\eps)\cap(K_2\cap\U(0,R)+2\eps)=\emptyset\cond K_1\cap A=\emptyset, K_2\cap A=\emptyset]}{\PP[(K_1\cap\U(0,R)+\eps)\cap(K_2\cap\U(0,R)+2\eps)=\emptyset]}\\
&=\lim_{R\to\infty}\frac{\PP[(K_1\cap\U(0,R)+\Phi_A(\eps))\cap(K_2\cap\U(0,R)+\Phi_A(2\eps))=\emptyset]}{\PP[(K_1\cap\U(0,R)+\eps)\cap(K_2\cap\U(0,R)+2\eps)=\emptyset]}\\
&\approx\frac{\Phi_A(\eps)^{\hat{\xi}(\beta_1,\beta_2)}}{\eps^{\hat{\xi}(\beta_1,\beta_2)}}.
\end{align*}
Therefore,
\begin{align*}
\lim_{R\to\infty,\eps\to 0}&\PP[K_1\cap A=\emptyset, K_2\cap A=\emptyset\cond (K_1\cap\U(0,R)+\eps)\cap(K_2\cap\U(0,R)+2\eps)=\emptyset]\\
&=\lim_{\eps\to 0}\Phi_A'(0)^{\beta_1}\Phi_A'(0)^{\beta_2}\frac{\Phi_A(\eps)^{\hat{\xi}(\beta_1,\beta_2)}}{\eps^{\hat{\xi}(\beta_1,\beta_2)}}\\
&=\Phi_A'(0)^{\beta_1+\beta_2+\hat{\xi}(\beta_1,\beta_2)}.
\end{align*}
This implies that, conditioned on ``non-intersection", the union $K_1\cup K_2$ satisfies chordal conformal restriction with exponent $\tilde{\xi}(\beta_1,\beta_2)$.
\end{proof}
\newpage
\section{Radial SLE}\label{sec::radial_sle}
\subsection{Radial Loewner chain}
\noindent\textbf{Capacity}
\medbreak
Consider a compact subset $K$ of $\overline{\U}$ such that $0\in\U\setminus K$ and $\U\setminus K$ is simply connected. Then there exists a unique conformal map $g_K$ from $\U\setminus K$ onto $\U$ normalized at the origin, i.e. $g_K(0)=0,g_K'(0)>0$. We call $a(K):=\log g_K'(0)$ the capacity of $K$ in $\U$ seen from the origin.
\begin{lemma}
The quantity $a$ is non-negative increasing function.
\end{lemma}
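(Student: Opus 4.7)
The plan is to handle the two claims separately, with the monotonicity reducing to the non-negativity by a composition argument analogous to the one used in the chordal half-plane capacity lemma earlier in the excerpt.

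\textbf{Non-negativity.} First I would argue that $a(K)\ge 0$ by applying the Schwarz lemma to the inverse map $g_K^{-1}\colon\U\to\U\setminus K\subset\U$. This map is holomorphic on $\U$, sends $0$ to $0$, and takes values in $\U$, so Schwarz gives $|(g_K^{-1})'(0)|\le 1$. Since $g_K'(0)>0$ by normalization, this means $g_K'(0)=1/(g_K^{-1})'(0)\ge 1$, hence $a(K)=\log g_K'(0)\ge 0$. (If one wants strict positivity for $K\ne\emptyset$, the Schwarz lemma equality case forces $g_K^{-1}$ to be a rotation, which combined with the positivity normalization would give $g_K^{-1}=\mathrm{id}$ and $K=\emptyset$.)

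\textbf{Monotonicity.} Suppose $K\subset K'$, both satisfying the hypotheses, so in particular $0\in\U\setminus K'\subset\U\setminus K$. Let $L:=g_K(K'\setminus K)$. Since $g_K$ is a conformal bijection from $\U\setminus K$ onto $\U$ and $\U\setminus K'$ is simply connected containing $0$, its image $g_K(\U\setminus K')=\U\setminus L$ is also a simply connected subset of $\U$ containing $0$, and $L$ is a compact subset of $\overline{\U}$ (modulo the standard Carath\'eodory-type extension to boundary points) satisfying the same hypotheses as $K$. Let $\psi:=g_L$ be the normalized conformal map from $\U\setminus L$ onto $\U$. Then $\psi\circ g_K$ is a conformal map from $\U\setminus K'$ onto $\U$ fixing $0$ with positive derivative at $0$, so by uniqueness it coincides with $g_{K'}$. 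Differentiating at $0$ yields
\[a(K')=\log g_{K'}'(0)=\log\psi'(0)+\log g_K'(0)=a(L)+a(K).\]
Applying the non-negativity already proved to $L$ gives $a(L)\ge 0$, hence $a(K')\ge a(K)$.

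\textbf{Main obstacle.} The computation itself is very short; the only point that needs a bit of care is the set-theoretic step that $K'\setminus K$ maps under $g_K$ to a well-defined compact set $L\subset\overline{\U}$ to which the same framework applies (so that $\psi=g_L$ makes sense and is normalized in the required way). This is a routine invocation of the conformal extension to the boundary, but it is worth spelling out to avoid any ambiguity about boundary points of $K'$ that lie on $\partial\U\cap\partial K$.
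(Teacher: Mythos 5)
Your proof is correct, and it agrees with the paper on the monotonicity step (the paper also defines $g_2$ on $\U\setminus g_K(K'\setminus K)$, writes $g_{K'}=g_2\circ g_1$, and concludes via the additivity of $a$ together with non-negativity). The difference is in the non-negativity step: the paper considers $h(z)=\log|g_K(z)/z|$, which is harmonic on $\U\setminus K$ with a removable singularity at $0$, observes that $h\ge 0$ on the boundary ($h=0$ on $\partial\U$ and $h>0$ on $\partial K\cap\U$), and invokes the minimum principle for harmonic functions to get $h(0)=a(K)\ge 0$. You instead apply the Schwarz lemma to $g_K^{-1}\colon\U\to\U\setminus K\subset\U$, which gives $|(g_K^{-1})'(0)|\le 1$ directly. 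These two routes are essentially the same result in disguise---the Schwarz lemma is itself a consequence of the maximum principle---but your argument is shorter and avoids the removable-singularity discussion, while the paper's argument makes the role of the boundary geometry (where $h\ge 0$) more explicit, which is arguably a useful intuition for a course built around harmonic functions and Brownian motion. Your caveat about the set-theoretic step $L=g_K(K'\setminus K)$ is reasonable but, as you say, routine; the paper simply asserts $\Psi_{K'}=\Psi_2\circ\Psi_1$ without further comment.
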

\begin{proof}
The quantity $a$ is non-negative: Denote $U=\U\setminus K$. Note that $\log g_K(z)/z$ is an analytic function on $U\setminus\{0\}$ and the origin is removable: we can define the function equals $\log g_K'(0)$ at the origin. Then $h(z)=\log|g_K(z)/z|$ is a harmonic function on $U$. Thus it attains its min on $\partial U$. For $z\in\partial U$, $h(z)\ge 0$. Therefore $h(z)\ge 0$ for all $z\in U$. In particular, $h(0)\ge 0$.

The quantity $a$ is increasing: Suppose $K\subset K'$. Define $g_1=g_K$ and let $g_2$ be the conformal map from $\U\setminus g_K(K'\setminus K)$ onto $\U$ normalized at the origin. Then $g_{K'}=g_2\circ g_1$. Thus
\[a(K')=\log g_2'(0)+\log g_1'(0)\ge \log g_1'(0)=a(K).\]
\end{proof}
\begin{remark}
If we denote by $d(0,K)$ the Euclidean distance from the origin to $K$, by Koebe $1/4$-Theorem, we have that
\[\frac{1}{4}e^{-a(K)}\le d(0,K)\le e^{-a(K)}.\]
\end{remark}
\medbreak
\noindent\textbf{Loewner chain}
\medbreak
Suppose $(W_t,t\ge 0)$ is a continuous real function with $W_0=0$. Define for $z\in\overline{\U}$, the function $g_t(z)$ as the solution to the ODE
\[\partial_t g_t(z)=g_t(z)\frac{e^{iW_t}+g_t(z)}{e^{iW_t}-g_t(z)},\quad g_0(z)=z.\]
The solution is well-defined as long as $e^{iW_t}-g_t(z)$ does not hit zero. Define
\[T(z)=\sup\{t>0: \min_{s\in[0,t]}|e^{iW_s}-g_s(z)|>0\}.\]
This is the largest time up to which $g_t(z)$ is well-defined. Set
\[K_t=\{z\in\overline{\U}: T(z)\le t\},\quad U_t=\U\setminus K_t.\]
We can check that the map $g_t$ is a conformal map from $U_t$ onto $\U$ normalized at the origin, and that, for each $t$, $g'_t(0)=e^t$. In other words, $a(K_t)=t$.
 The family $(K_t,t\ge 0)$ is called the \textbf{radial Loewner chain} driven by $(W_t,t\ge 0)$.
 
\subsection{Radial SLE}
\noindent\textbf{Definition}
\medbreak
Radial SLE$_{\kappa}$ for $\kappa\ge 0$ is the radial Loewner chain driven by $W_t=\sqrt{\kappa}B_t$ where $B$ is a 1-dimensional BM starting from $B_0=0$. This defines radial SLE$_{\kappa}$ in $\U$ from $1$ to the origin. For general simply connected domain $D$ with a boundary point $x$ and an interior point $z$, the radial SLE$_{\kappa}$ in $D$ from $x$ to $z$ is the image of radial SLE$_{\kappa}$ in $\U$ from $1$ to the origin under the conformal map from $\U$ to $D$ sending the pair $1,0$ to $x,z$. 
\begin{lemma}
Radial SLE satisfies domain Markov property: For any stopping time $T$, the process $(g_T(K_{t+T}\setminus K_T)e^{-iW_T}, t\ge 0)$ is independent of $(K_s,0\le s\le T)$ and has the same law as $K$.
\end{lemma}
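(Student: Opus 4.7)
The plan is to mimic the proof of the chordal Markov property shown earlier in the paper: combine the strong Markov property of the Brownian motion $B$ driving $W_t = \sqrt{\kappa}B_t$ with a direct verification that the appropriately shifted hulls form a radial Loewner chain driven by the shifted Brownian motion. The only new wrinkle compared to the chordal case is that the natural symmetry which ``recenters'' the driving function is now the rotation $w\mapsto e^{-iW_T}w$ of $\U$, rather than the translation $w\mapsto w - W_T$ of $\HH$; this is exactly why the statement involves the factor $e^{-iW_T}$.

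Concretely, fix a stopping time $T$ and set $\tilde{W}_s := W_{T+s} - W_T$ and $\tilde{K}_s := e^{-iW_T}\, g_T(K_{T+s}\setminus K_T)$. I would introduce the candidate maps
$$\tilde{g}_s(w) := e^{-iW_T}\, g_{T+s}\!\left(g_T^{-1}(e^{iW_T}w)\right), \qquad w\in \U\setminus\tilde{K}_s,$$
and check three things. \textbf{(a)} One has $\tilde{g}_0(w)=w$, and $\tilde{g}_s:\U\setminus\tilde{K}_s\to\U$ is conformal with $\tilde{g}_s(0)=0$ and $\tilde{g}_s'(0) = g_{T+s}'(0)/g_T'(0) = e^s$, so $\tilde K_s$ is parametrized by capacity in the sense introduced in the previous subsection. \textbf{(b)} Differentiating the defining identity in $s$ and using the radial Loewner equation for $g_{T+s}$, the cancellation $e^{iW_{T+s}} - e^{iW_T}\tilde g_s(w) = e^{iW_T}(e^{i\tilde W_s}-\tilde g_s(w))$ yields
$$\partial_s \tilde g_s(w) = \tilde g_s(w)\,\frac{e^{i\tilde W_s}+\tilde g_s(w)}{e^{i\tilde W_s}-\tilde g_s(w)},$$
so $(\tilde K_s)_{s\ge 0}$ is \emph{exactly} the radial Loewner chain driven by $\tilde W$. \textbf{(c)} By the strong Markov property of $B$, the process $\tilde W/\sqrt{\kappa}$ is a standard Brownian motion starting at $0$ that is independent of $\mathcal{F}_T$; hence $\tilde W \stackrel{d}{=} W$ and $\tilde W$ is independent of $(K_s,0\le s\le T)$. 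Since the radial Loewner chain is a deterministic functional of its driving function, (b) then gives $\tilde K \stackrel{d}{=} K$ and independence of $\tilde K$ from $(K_s, 0\le s\le T)$.

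The main obstacle is bookkeeping rather than analysis: one must know that $g_T$ and $g_T^{-1}$ depend measurably on the path $(W_s, 0\le s\le T)$, so that both $g_T$ and the rotation by $e^{-iW_T}$ are $\mathcal{F}_T$-measurable; then conditioning on $\mathcal{F}_T$ freezes these objects and transfers the independence in (c) to independence of $(\tilde K_s)_{s\ge 0}$ from $\sigma(K_s, 0\le s\le T)$. The algebraic step in (b) is where I expect the trickiest manipulation, but it is a short computation and the symmetry $e^{iW_{T+s}}=e^{iW_T}e^{i\tilde W_s}$ does all the work.
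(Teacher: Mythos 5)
Your proof is correct, and it is exactly the argument the paper indicates but does not spell out: the paper proves only the chordal version (``Since BM is a strong Markov process with independent increments\dots''), leaving the radial lemma without a written proof. You supply precisely the missing bookkeeping: the normalization check $\tilde g_s(0)=0$, $\tilde g_s'(0)=g_{T+s}'(0)/g_T'(0)=e^s$, the algebraic cancellation $e^{iW_{T+s}}=e^{iW_T}e^{i\tilde W_s}$ that turns the radial Loewner equation for $g_{T+s}$ into the radial Loewner equation for $\tilde g_s$ driven by $\tilde W$, and the strong Markov property of $B$ to conclude.
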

\begin{proposition}
For $\kappa\in [0,4]$, radial SLE$_{\kappa}$ is almost surely a simple continuous curve. Moreover, almost surely, we have 
$\lim_{t\to\infty}\gamma(t)=0$.
\end{proposition}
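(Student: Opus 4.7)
The plan is to reduce both statements to the chordal case treated in Section \ref{sec::chordal_sle}, by comparing the radial Loewner driver with a chordal one via a Girsanov change of measure, and then to read off the geometric behaviour near the target point from the capacity identity $g_t'(0)=e^t$ together with Koebe's theorem.

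For the existence of a simple continuous curve, I would fix $\delta>0$ and stop at $\tau_\delta=\inf\{t\ge 0: d(0,K_t)\le \delta\}$. On $[0,\tau_\delta]$ the hull stays in the annulus $\{\delta<|z|<1\}$, so I can compose the radial chain with a conformal map $\psi:\U\setminus\{0\}\to\HH$ (or with the covering map to a horizontal strip) sending $1$ to $0$, and re-parametrize by half-plane capacity. The pushed hulls $\tilde K_s=\psi(K_{t(s)})$ then form a chordal Loewner chain in $\HH$ whose driving function, by It\^o's formula applied to $\psi$, satisfies
\[d\tilde W_s=\sqrt{\kappa}\,d\beta_s+b_s\,ds,\]
where $\beta$ is a standard Brownian motion and $b_s$ is a bounded smooth drift for $t(s)<\tau_\delta$. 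Girsanov's theorem then gives mutual absolute continuity between the law of $(\tilde W_s)$ and that of $\sqrt{\kappa}\beta_s$ on the corresponding interval, so the simple-continuous-curve property of chordal SLE$_\kappa$ asserted in Section \ref{sec::chordal_sle} transfers to the pushed hulls, and hence to $(K_t)_{t\le\tau_\delta}$. Since $\tau_\delta\uparrow\infty$ as $\delta\downarrow 0$ (a consequence of Koebe; see below), letting $\delta\to 0$ yields the first assertion.

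For the limit $\gamma(t)\to 0$, I would first combine $g_t'(0)=e^t$ with the Koebe $1/4$-theorem to deduce $d(0,K_t)\le e^{-t}$, so the compact hulls shrink to $\{0\}$ in the sense that the conformal radius of $U_t$ at $0$ tends to $0$; Carath\'eodory kernel convergence then implies that every compact $F\subset\overline{\U}\setminus\{0\}$ is eventually contained in $K_t$. To upgrade from $d(0,K_t)\to 0$ to convergence of the tip $\gamma(t)=g_t^{-1}(e^{iW_t})$, I would use that the Loewner trace is almost surely H\"older continuous up to $\partial\U$ for $\kappa\in[0,4]$ (a by-product of the chordal comparison in the first step), together with the fact that once $d(0,K_t)$ is small only the newly added arc of the simple curve can reach near the origin, which forces $\gamma(t)$ itself to converge to $0$.

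The main obstacle is the It\^o/Girsanov computation in the first paragraph: one has to conjugate the radial Loewner ODE by $\psi$, identify the resulting chordal driver $\tilde W$ in closed form, and verify uniform boundedness of $b_s$ on $[0,\tau_\delta]$. This bookkeeping parallels the coordinate-change calculation that produced the local martingale of Proposition \ref{prop::chordalsle8over3_localmart}. Once the comparison is in place, both assertions follow; the finer boundary-regularity issues (H\"older continuity of the trace, etc.) are where the real technical work sits, and I would defer to the references cited after the analogous chordal proposition rather than reproduce the proof here.
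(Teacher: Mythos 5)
The paper itself gives no proof here: it states the radial proposition without comment, in parallel with the chordal proposition earlier which it also states without proof, referring to \cite{RohdeSchrammSLEBasicProperty}. So the comparison is really between your sketch and what a correct argument would need, and there are a few genuine problems.

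Your overall strategy of establishing mutual absolute continuity between the radial driving process and $\sqrt{\kappa}B$ up to a stopping time $\tau_\delta$, and then importing the chordal existence-of-trace result, is the right idea and is essentially what is done in the literature (and is in the same spirit as Section \ref{subsec::sle_radial_chordal} of this paper). But the way you set up the conjugation does not work as written: there is no conformal map $\psi:\U\setminus\{0\}\to\HH$, since $\U\setminus\{0\}$ is an annulus (not simply connected). You should either map the full disc $\U\to\HH$ by a M\"obius transformation sending $1\mapsto 0$ (so $0$ becomes an interior point, as in Section \ref{subsec::sle_radial_chordal}), or lift to the universal cover $\HH\to\U\setminus\{0\}$ given by $z\mapsto e^{iz}$; your parenthetical ``covering map to a strip'' gestures at the latter but is not carried out. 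Either route leads to a chordal driver of the form $\sqrt{\kappa}\,d\beta_s+b_s\,ds$ with $b_s$ bounded before $\tau_\delta$, and then Girsanov does what you want; but the bookkeeping is not a formality and you have not actually done it.

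The more serious gap is in the second half. Koebe together with $g_t'(0)=e^t$ gives $d(0,K_t)\le e^{-t}\to 0$, which is correct, but your next sentence is false: Carath\'eodory kernel convergence does \emph{not} imply that every compact $F\subset\overline{\U}\setminus\{0\}$ is eventually contained in $K_t$. For $\kappa\in[0,4]$ the hull $K_t=\gamma[0,t]$ is a simple curve with zero area, so it never contains any disc. What is true is only that the curve \emph{accumulates} at the origin. Upgrading accumulation of the path to convergence of the tip $\gamma(t)\to 0$ is exactly the delicate part, and neither ``H\"older continuity of the trace'' nor ``only the newly added arc can reach near the origin'' does the job as stated: a priori the tip could repeatedly approach the origin and wander away again (this is ruled out in \cite{RohdeSchrammSLEBasicProperty} by an explicit estimate controlling the diameter of the future curve in terms of the remaining conformal radius, not by a soft continuity argument). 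As written, this step is a claim, not an argument; you would need to either import the relevant transience estimate from the reference or prove a quantitative bound of the form ``with high probability, $\mathrm{diam}(\gamma[t,\infty))$ is small once $d(0,\gamma[0,t])$ is small.''
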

\begin{figure}[ht!]
\begin{center}
\includegraphics[width=0.47\textwidth]{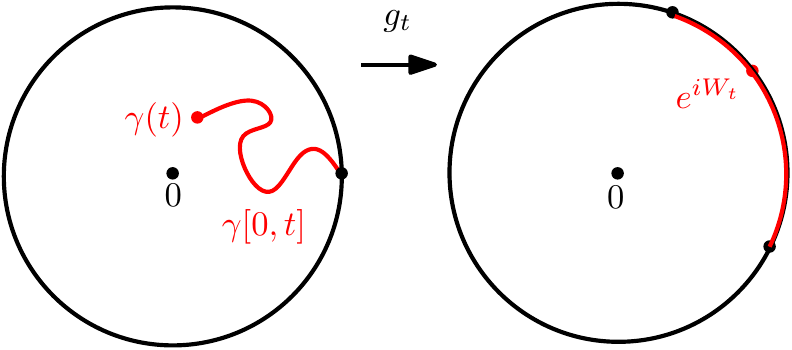}
\end{center}
\caption{\label{fig::sle_radial_explanation} The map $g_t$ is the conformal map from $\U\setminus \gamma[0,t]$ onto $\U$ normalized at the origin, and the tip of the curve $\gamma(t)$ is the preimage of $e^{iW_t}$ under $g_t$.}
\end{figure}
\medbreak
\noindent\textbf{Restriction property of radial SLE$_{8/3}$}
\medbreak
Recall Definition \ref{def::collection_radial}. Suppose $A\in\LA_r$, and $\Phi_A$ is the conformal map from $\U\setminus A$ onto $\U$ such that $\Phi_A(0)=0, \Phi_A(1)=1$.
Let $\gamma$ be a radial SLE$_{8/3}$, we will compute the probability $\PP[\gamma\cap A=\emptyset]$.
Similar as the chordal case, we need to study the image $\tilde{\gamma}=\Phi_A(\gamma)$. Define $T=\inf\{t:\gamma(t)\in A\}$, and For $t<T$, let $\tilde{\gamma}[0,t]:=\Phi_A(\gamma[0,t])$.
Note that $\Phi_A$ is the conformal map from $\U\setminus A$ onto $\U$ with $\Phi_A(0)=0,\Phi_A(1)=1$ and that $g_t$ is the conformal map from $\U\setminus\gamma[0,t]$ onto $\U$ normalized at the origin. Define $\tilde{g}_t$ to be the conformal map from $\U\setminus\tilde{\gamma}[0,t]$ onto $\U$ normalized at the origin and $h_t$ the conformal map from $\U\setminus g_t(A)$ onto $\U$ such that Equation (\ref{eqn::sle8over3_radial_exchange}) holds. See Figure \ref{fig::sle8over3_radial_exchange}.
\begin{equation}\label{eqn::sle8over3_radial_exchange}h_t\circ g_t=\tilde{g}_t\circ \Phi_A.\end{equation}
\begin{proposition}\label{prop::sle8over3_radial_martingale}
When $\kappa=8/3$, the process
\[M_t=|h_t'(0)|^{5/48}|h_t'(e^{iW_t})|^{5/8},\quad t<T\]
is a local martingale.
\end{proposition}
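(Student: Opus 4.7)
The plan is to mirror the proof of Proposition~\ref{prop::chordalsle8over3_localmart} from the chordal setting but with the radial Loewner equation, and to track separately the two conformal invariants $|h_t'(0)|$ and $|h_t'(e^{iW_t})|$, which are coupled through the capacity derivative $\partial_t a$.

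First I would differentiate the commutation relation $h_t\circ g_t=\tilde g_t\circ\Phi_A$ in $t$. Writing $a(t)=\log\tilde g_t'(0)$ for the radial capacity of $\tilde\gamma[0,t]$, reparametrizing $\tilde g_t$ so that the radial Loewner equation becomes $\partial_t\tilde g_t(\zeta)=(\partial_t a)\,\tilde g_t(\zeta)(e^{i\tilde W_t}+\tilde g_t(\zeta))/(e^{i\tilde W_t}-\tilde g_t(\zeta))$, and substituting $w=g_t(z)$, one gets the PDE
\[
(\partial_t h_t)(w)+h_t'(w)\,w\,\frac{e^{iW_t}+w}{e^{iW_t}-w}=(\partial_t a)\,h_t(w)\,\frac{e^{i\tilde W_t}+h_t(w)}{e^{i\tilde W_t}-h_t(w)}.
\]
Since $h_t(e^{iW_t})=e^{i\tilde W_t}$, both sides have a simple pole at $w=e^{iW_t}$. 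Matching the residues (using $e^{i\tilde W_t}-h_t(w)\sim h_t'(e^{iW_t})(e^{iW_t}-w)$ and the identity $h_t'(e^{iW_t})=e^{i(\tilde W_t-W_t)}|h_t'(e^{iW_t})|$) yields
\[
\partial_t a=|h_t'(e^{iW_t})|^2.
\]
Differentiating the commutation relation at $z=0$ gives $h_t'(0)\,g_t'(0)=\tilde g_t'(0)\,\Phi_A'(0)$, so $|h_t'(0)|=\Phi_A'(0)\,e^{a(t)-t}$ and therefore
\[
\partial_t\log|h_t'(0)|=|h_t'(e^{iW_t})|^2-1.
\]
This gives me the deterministic evolution of the first factor of $M_t$.

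Next I would Taylor-expand the PDE around $w=e^{iW_t}$ to read off the finite parts. Subtracting the polar parts leaves an explicit expression for $(\partial_t h_t)(e^{iW_t})$ in terms of $h_t'(e^{iW_t})$, $h_t''(e^{iW_t})$ and $h_t'''(e^{iW_t})$; differentiating the PDE once in $w$ and repeating the same limiting procedure gives $(\partial_t h_t')(e^{iW_t})$. Then I would apply It\^o's formula to $h_t(e^{iW_t})$ with $dW_t=\sqrt{\kappa}\,dB_t$, using $d(e^{iW_t})=ie^{iW_t}dW_t-(\kappa/2)e^{iW_t}dt$, to obtain the SDE for $\tilde W_t$; a second application to $h_t'(e^{iW_t})$, followed by taking the modulus and passing to the logarithm, gives the decomposition
\[
d\log|h_t'(e^{iW_t})|=\Theta_\kappa\bigl(h_t',h_t'',h_t'''\bigr)\,dt+(\text{martingale})\,dB_t,
\]
where $\Theta_\kappa$ is a polynomial in the pre-Schwarzian coefficients whose form can be read off from the Taylor expansion.

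Finally, combining the two evolutions, the drift of $\log M_t=(5/48)\log|h_t'(0)|+(5/8)\log|h_t'(e^{iW_t})|$ is a rational combination of $|h_t'(e^{iW_t})|^2$, $h_t''(e^{iW_t})/h_t'(e^{iW_t})$ and $h_t'''(e^{iW_t})/h_t'(e^{iW_t})$, with coefficients depending linearly on $\kappa$, $5/8$, and $5/48$. One checks that at $\kappa=8/3$ this drift cancels exactly the $-\tfrac12$ times the quadratic variation rate of $\log M_t$, which is equivalent to $M_t$ being a local martingale. The main obstacle is the bookkeeping in step 4--5: the radial Loewner PDE mixes the boundary derivatives of $h_t$ at $e^{iW_t}$ with the two normalization constants $\tilde W_t$ and $a(t)$, and one must be careful to separate the phase $e^{i(\tilde W_t-W_t)}$ from the real modulus $|h_t'(e^{iW_t})|$ before comparing real-valued drifts. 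The specific constant $5/48$ (rather than the chordal $5/8$) reflects the fact that $\partial_t\log|h_t'(0)|$ contributes $|h_t'(e^{iW_t})|^2-1$ rather than $h_t'(W_t)^2$, and its value is forced by requiring the $|h_t'(e^{iW_t})|^2$ drifts from the two factors to cancel.
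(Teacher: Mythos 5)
Your proposal is correct and follows essentially the same route as the paper: differentiate the commutation relation $h_t\circ g_t=\tilde g_t\circ\Phi_A$ against the time-changed radial Loewner flow, extract $\partial_t a=|h_t'(e^{iW_t})|^2$ from the residue at $e^{iW_t}$, deduce $\partial_t\log|h_t'(0)|=|h_t'(e^{iW_t})|^2-1$ from the capacity identity, and then apply It\^o to the boundary factor and check drift cancellation at $\kappa=8/3$. The one bookkeeping issue you flag (separating the phase $e^{i(\tilde W_t-W_t)}$ from the real modulus) is exactly what the paper sidesteps cleanly by passing to $\phi_t(z)=-i\log h_t(e^{iz})$, which turns $h_t$ on the circle into a real-to-real map near $W_t$ so that $|h_t'(e^{iW_t})|=\phi_t'(W_t)$ and all drifts are manifestly real.
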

\begin{proof}
Define \[\phi_t(z)=-i\log h_t(e^{iz})\]
where $\log$ denotes the branch of the logarithm such that $-i\log h_t(e^{iW_t})=W_t$. Then
\[h_t(e^{iz})=e^{i\phi_t(z)},\quad h_t'(e^{iW_t})=\phi_t'(W_t).\]
Define
\[a(t)=a(K_t\cup A)=a(A)+a(\tilde{K}_t).\]
A similar time change argument as in the proof of Proposition \ref{prop::chordalsle8over3_localmart} shows that
\[\partial_t\tilde{g}_t(z)=\partial_t a\, \tilde{g}_t(z)\frac{e^{i\tilde{W}_t}+\tilde{g}_t(z)}{e^{i\tilde{W}_t}-\tilde{g}_t(z)}.\]
Plugging $h_t\circ g_t=\tilde{g}_t\circ\Phi_A$, we have
\begin{equation}\label{eqn::sle_radial_restriction_1}
\partial_t h_t(z)+h_t'(z)z\frac{e^{iW_t}+z}{e^{iW_t}-z}=\partial_ta\, h_t(z)\frac{e^{iW_t}+h_t(z)}{e^{iW_t}-h_t(z)}.\end{equation}
We can first find $\partial_t a$: multiply $e^{iW_t}-h_t(z)$ to both sides of Equation (\ref{eqn::sle_radial_restriction_1}) and then let $z\to e^{iW_t}$. We have
\[\partial_t a=h_t'(e^{iW_t})^2=\phi_t'(W_t)^2.\]
Denote
\[X_1=\phi_t'(W_t),\quad X_2=\phi_t''(W_t),\quad X_3=\phi_t'''(W_t).\]
Then Equation (\ref{eqn::sle_radial_restriction_1}) becomes
\[\partial_t h_t(z)=X_1^2h_t(z)\frac{e^{iW_t}+h_t(z)}{e^{iW_t}-h_t(z)}-h_t'(z)z\frac{e^{iW_t}+z}{e^{iW_t}-z}.\]
Plugin the relation $h_t(e^{iz})=e^{i\phi_t(z)}$, we have that
\begin{equation}\label{eqn::sle_radial_restriction_2}
\partial_t\phi_t(z)=X_1^2\cot(\frac{\phi_t(z)-W_t}{2})-\phi_t'(z)\cot(\frac{z-W_t}{2}).
\end{equation}
Differentiate Equation (\ref{eqn::sle_radial_restriction_2}) with respect to $z$, we have
\[\partial_t\phi_t'(z)=-\frac{1}{2}X_1^2\phi_t'(z)\csc^2(\frac{\phi_t(z)-W_t}{2})-\phi_t''(z)\cot(\frac{z-W_t}{2})+\frac{1}{2}\phi_t'(z)\csc^2(\frac{z-W_t}{2}).\]
Let $z\to W_t$,
\[\partial_t\phi_t'(W_t)=\frac{X_2^2}{2X_1}-\frac{4}{3}X_3+\frac{X_1-X_1^3}{6}.\]
Thus
\begin{equation}\label{eqn::sle_radial_restriction_w}dh_t'(e^{iW_t})=d\phi_t'(W_t)=\sqrt{\frac{8}{3}}X_2dB_t+(\frac{X_2^2}{2X_1}+\frac{X_1-X_1^3}{6})dt.\end{equation}
For the term $h_t'(0)$, we have that
\[h_t'(0)=\exp(a(g_t(A)))=\exp(a(t)-t)=\exp(a(A)+\int_0^t\phi_s'(W_s)^2ds-t),\]
thus
\begin{equation}\label{eqn::sle_radial_restriction_o}
dh_t'(0)=h_t'(0)(X_1^2-1)dt.
\end{equation}
Combining Equations (\ref{eqn::sle_radial_restriction_w}) and (\ref{eqn::sle_radial_restriction_o}), we have that
\[dM_t=\frac{5}{8}M_t\frac{X_2}{X_1}dW_t.\]

\end{proof}
\begin{figure}[ht!]
\begin{center}
\includegraphics[width=0.47\textwidth]{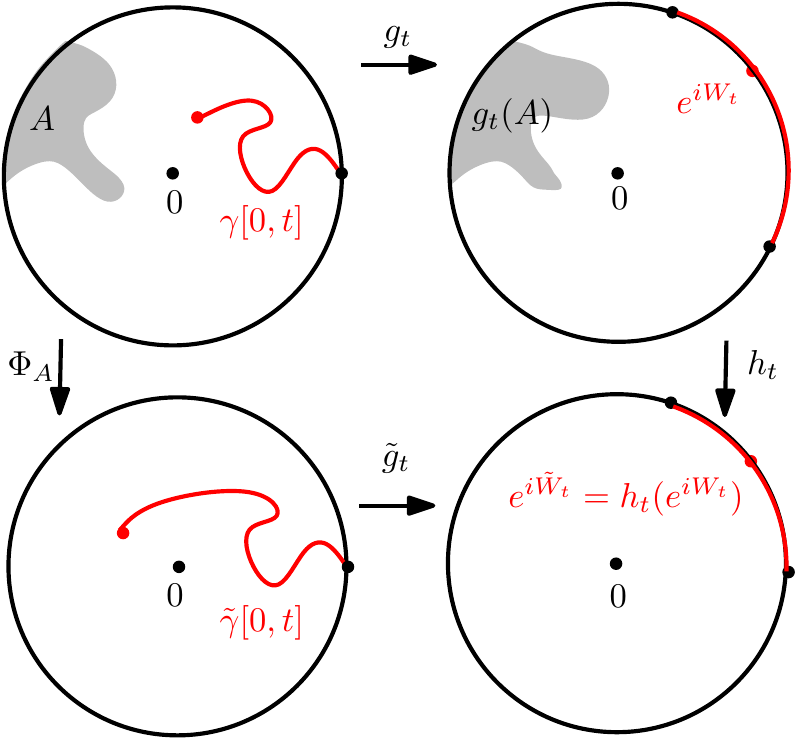}
\end{center}
\caption{\label{fig::sle8over3_radial_exchange} The map $\Phi_A$ is the conformal map from $\U\setminus A$ onto $\U$ with $\Phi_A(0)=0,\Phi_A(1)=1$. The map $g_t$ is the conformal map from $\U\setminus\gamma[0,t]$ onto $\U$ normalized at the origin. Define $\tilde{g}_t$ to be the conformal map from $\U\setminus\tilde{\gamma}[0,t]$ onto $\U$ normalized at the origin and $h_t$ the conformal map from $\U\setminus g_t(A)$ onto  $\U$ such that $h_t\circ g_t=\tilde{g}_t\circ\Phi_A$.}
\end{figure}
\begin{theorem}
Suppose $\gamma$ is a radial SLE$_{8/3}$ in $\U$ from $1$ to $0$. Then for any $A\in\LA_r$, we have
\[\PP[\gamma\cap A=\emptyset]=|\Phi_A'(0)|^{5/48}\Phi_A'(1)^{5/8}.\]
\end{theorem}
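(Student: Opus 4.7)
The plan is to mirror the chordal proof of Theorem~\ref{thm::sle8over3_restriction}, exploiting the local martingale supplied by Proposition~\ref{prop::sle8over3_radial_martingale}. By approximating $A\in\LA_r$ by hulls with smooth boundary, I may assume $\Phi_A$ extends analytically across $\partial\U$ in a neighbourhood of $1$; orientation preservation at the boundary then makes $\Phi_A'(1)$ a positive real. Set $T=\inf\{t\ge 0:\gamma(t)\in A\}$ and, for $t<T$, let
\[
M_t=|h_t'(0)|^{5/48}\,|h_t'(e^{iW_t})|^{5/8}.
\]
Since $g_0$ and $\tilde g_0$ are the identity, the relation $h_t\circ g_t=\tilde g_t\circ\Phi_A$ forces $h_0=\Phi_A$, so
\[
M_0=|\Phi_A'(0)|^{5/48}\,\Phi_A'(1)^{5/8}.
\]

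Next I would analyse $M_t$ as $t\uparrow T$. On $\{T=\infty\}$ the curve $\gamma(t)\to 0$ inside $\U\setminus A$, the tip $e^{iW_t}$ stays at positive distance from $g_t(A)$, and since $g_t$ expands near the origin with $g_t'(0)=e^t\to\infty$, the set $g_t(A)$ is pushed toward $\partial\U$; consequently $h_t$ converges locally uniformly to the identity, giving $|h_t'(0)|\to 1$, $|h_t'(e^{iW_t})|\to 1$, and $M_t\to 1$. On $\{T<\infty\}$, $\gamma(t)\to A$, so $e^{iW_t}=g_t(\gamma(t))$ approaches the compact set $g_t(A)$; a standard conformal distortion estimate, parallel to the chordal argument in \cite{LawlerSchrammWernerConformalRestriction}, shows $|h_t'(e^{iW_t})|\to 0$ fast enough that $M_t\to 0$ despite the possible growth of $|h_t'(0)|^{5/48}$.

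Assuming $M$ is uniformly integrable on $[0,T)$, optional stopping at $T$ then yields
\[
\PP[\gamma\cap A=\emptyset]=\PP[T=\infty]=\E\!\left[\lim_{t\uparrow T}M_t\right]=M_0=|\Phi_A'(0)|^{5/48}\,\Phi_A'(1)^{5/8}.
\]

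The main obstacle is the uniform integrability of $M$. In the chordal case this was immediate from the bound $h_t'(W_t)^{5/8}\le 1$, a direct consequence of the Brownian-excursion interpretation (Proposition~\ref{prop::be_restriction}). Here, however, the interior factor $|h_t'(0)|^{5/48}$ is at least $1$ (Schwarz lemma applied to $h_t^{-1}$) and may grow unboundedly as $g_t(A)$ accumulates capacity. The compensation comes from $|h_t'(e^{iW_t})|^{5/8}$, but making it quantitative is delicate. One route is to reduce $M$ by a sequence of stopping times that truncate $|h_t'(0)|$, apply optional stopping up to each, and pass to the limit using the decay established above; another is to identify $M_t$ probabilistically with the joint avoidance probability of $g_t(A)$ by a Brownian excursion near $e^{iW_t}$ together with an independent Brownian loop soup of intensity $5/48$ surrounding $0$ (Theorems~\ref{thm::ppp_be_restriction} and \ref{thm::ppp_bl_restriction}), which bounds $M_t$ a priori. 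The details are spelled out in \cite{WuConformalRestrictionRadial}.
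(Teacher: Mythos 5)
Your overall structure mirrors the paper's proof exactly: you take the local martingale $M_t=|h_t'(0)|^{5/48}|h_t'(e^{iW_t})|^{5/8}$ from Proposition~\ref{prop::sle8over3_radial_martingale}, note $M_0=|\Phi_A'(0)|^{5/48}\Phi_A'(1)^{5/8}$, identify the limits on $\{T=\infty\}$ and $\{T<\infty\}$, and invoke optional stopping. All of that matches the paper.

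However, you leave a genuine gap at exactly the point you yourself flag as ``the main obstacle'': uniform integrability. The paper closes this in one line by the deterministic inequality
\[
|\Phi_A'(0)|\,\Phi_A'(1)^2\le 1\qquad\text{for every }A\in\LA_r,
\]
which (after a rotation, and noting $|h_t'(e^{iW_t})|\le 1$) gives $|h_t'(0)|^{5/48}|h_t'(e^{iW_t})|^{5/24}\le 1$ and hence $M_t\le 1$ for all $t<T$; $M$ is thus a bounded martingale and optional stopping applies directly. You do not identify this inequality, and the two substitutes you offer do not in fact supply the bound. Your route (a), truncating $|h_t'(0)|$ by stopping times, still requires controlling $M$ on the event that the truncation level is reached before $T$, which is precisely the estimate you are missing. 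Your route (b) is stated with the wrong sign: by Theorem~\ref{thm::ppp_bl_restriction}, $|h_t'(0)|^{-5/48}$ (not $|h_t'(0)|^{5/48}$, which is $\ge 1$ by the Schwarz lemma as you note) is the loop-soup avoidance probability, so the ``Brownian excursion times loop soup'' interpretation does not write $M_t$ as a product of two probabilities and gives no a priori bound. In short: correct scaffold, but the single key estimate that makes the optional-stopping step legitimate is absent, and your proposed workarounds do not replace it.
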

\begin{proof}
Suppose $M$ is the local martingale defined in Proposition \ref{prop::sle8over3_radial_martingale}. Note that
\[M_0=|\Phi_A'(0)|^{5/48}\Phi_A'(1)^{5/8}.\]
Define $T=\inf\{t: K_t\cap A\neq\emptyset\}.$
In fact, $|\Phi_A'(0)|\Phi_A'(1)^2\le 1$ for any $A\in\LA_r$, thus $M$ is a bounded martingale.

If $T=\infty$, we have 
\[\lim_{t\to\infty}h_t'(e^{iW_t})=1, \quad \lim_{t\to\infty}h_t'(0)=1, \quad \text{and }\lim_{t\to\infty}M_t=1.\]
If $T<\infty$, we have 
\[\lim_{t\to T}h_t'(e^{iW_t})=0, \quad \text{and }\lim_{t\to T}M_t=0.\]
Thus,
\[\PP[\gamma\cap A=\emptyset]=\PP[T=\infty]=E[M_T]=M_0.\]
\end{proof}
\subsection{Radial SLE$_{\kappa}(\rho)$ process}
Fix $\kappa>0$, $\rho>-2$. Radial SLE$_{\kappa}(\rho)$ process is the radial Loewner chain driven by $W$ which is the solution to the following SDE:
\begin{equation}\label{eqn::sle_radial_kapparho}
dW_t=\sqrt{\kappa}dB_t+\frac{\rho}{2}\cot(\frac{W_t-O_t}{2})dt,\quad dO_t=-\cot(\frac{W_t-O_t}{2})dt,
\end{equation}
with initial value $W_0=0, O_0=x\in (0,2\pi)$. 
When $\kappa>0,\rho>-2$, there exists a piecewise unique solution to the SDE (\ref{eqn::sle_radial_kapparho}). There exists almost surely a continuous curve $\gamma$ in $\overline{\U}$ from $1$ to $0$ so that $(K_t,t\ge 0)$ is generated by $\gamma$. When $\kappa\in [0,4]$ and $\rho\ge \kappa/2-2$, $\gamma$ is a simple curve and $K_t=\gamma[0,t]$. When $\kappa\in[0,4],\rho\in (-2,\kappa/2-2)$, $\gamma$ almost surely hits the boundary. The tip $\gamma(t)$ is the preimage of $e^{iW_t}$ under $g_t$, and $e^{ix}$ (when it is not swallowed by $K_t$) is the preimage of $e^{iO_t}$ under $g_t$. When $e^{ix}$ is swallowed by $K_t$, then the preimage of $e^{iO_t}$ under $g_t$ is the last point (before time $t$) on the curve that is on the boundary. See Figure \ref{fig::slekapparho_radial_explanation}.

\begin{figure}[ht!]
\begin{subfigure}[b]{0.47\textwidth}
\begin{center}
\includegraphics[width=\textwidth]{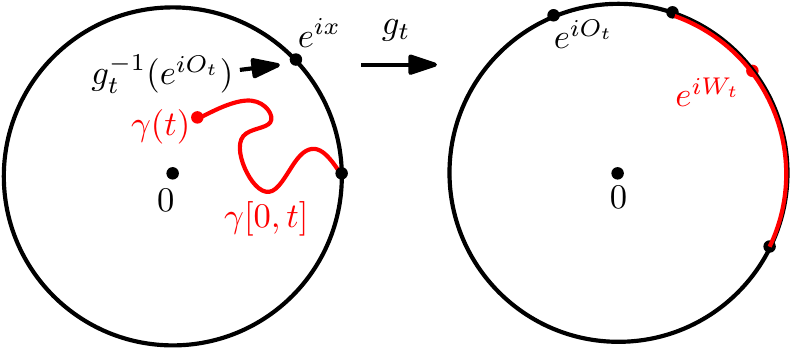}
\end{center}
\caption{When $\rho\ge \kappa/2-2$, the curve does not hit the boundary.}
\end{subfigure}
$\quad$
\begin{subfigure}[b]{0.47\textwidth}
\begin{center}\includegraphics[width=\textwidth]{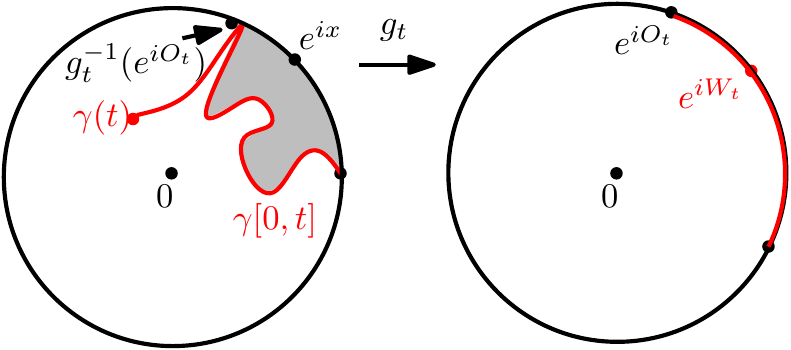}
\end{center}
\caption{When $\rho\in (-2,\kappa/2-2)$, the curve touches the boundary.}
\end{subfigure}
\caption{\label{fig::slekapparho_radial_explanation}Geometric meaning of $(O_t,W_t)$ in radial SLE$_{\kappa}(\rho)$ process. The preimage of $e^{iW_t}$ under $g_t$ is the tip of the curve, the preimage of $e^{iO_t}$ under $g_t$ is the last point on the curve that is on the boundary. }
\end{figure}

Let $x\to 0+$ (resp. $x\to 2\pi-$), the process has a limit, and we call this limit the radial SLE$^R_{\kappa}(\rho)$ (resp. SLE$^L_{\kappa}(\rho)$) in $\overline{\U}$ from $1$ to $0$.
Suppose $\gamma$ is an SLE$^L_{8/3}(\rho)$ process for some $\rho>-2$. For any $A\in\LA_r$, we want to analyze the image of $\gamma$ under $\Phi_A$. Define $T=\inf\{t:\gamma(t)\in A\}$. For $t<T$, note that $\Phi_A$ is the conformal map from $\U\setminus A$ onto $\U$ with $\Phi_A(0)=0,\Phi_A(1)=1$,  and that $g_t$ is the conformal map from $\U\setminus\gamma[0,t]$ onto $\U$ normalized at the origin. Define $\tilde{g}_t$ to be the conformal map from $\U\setminus\tilde{\gamma}[0,t]$ onto $\U$ normalized at the origin and $h_t$ the conformal map from $\U\setminus g_t(A)$ onto  $\U$ such that $h_t\circ g_t=\tilde{g}_t\circ\Phi_A$. See Figure \ref{fig::sle8over3rho_radial_exchange}. Denote
\[\theta_t=\frac{W_t-O_t}{2},\quad \vartheta_t=\frac{1}{2}\arg(h_t(e^{iW_t})/h_t(e^{iO_t})).\]
\begin{proposition}\label{prop::sle8over3rho_radial_mart}
Define \[M_t=|h_t'(0)|^{\alpha}\times|h_t'(e^{iW_t})|^{5/8}\times|h_t'(e^{iO_t})|^{\rho(3\rho+4)/32}\times\left(\frac{\sin\vartheta_t}{\sin\theta_t}\right)^{3\rho/8}\]
where
\[\alpha=\frac{5}{48}+\frac{3}{64}\rho(\rho+4).\] Then $M$ is a local martingale.
Note that, if we set
\[\beta=\frac{5}{8}+\frac{1}{32}\rho(3\rho+4)+\frac{3}{8}\rho,\] we have $\alpha=\xi(\beta)$.
\end{proposition}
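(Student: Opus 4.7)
The plan is to apply Itô's formula to each of the four factors of $M_t$ and verify that the total drift vanishes, following the template of Propositions~\ref{prop::chordalsle8over3_localmart}, \ref{prop::sle8over3_radial_martingale}, and Theorem~\ref{thm::sle8over3rho_rightrestriction}. The driving process now carries a $\rho$-drift, so the $\rho=0$ calculation of Proposition~\ref{prop::sle8over3_radial_martingale} gets augmented by terms involving $\cot\theta_t$ and by a new factor tracking the interior/boundary separation $\vartheta_t$, but the PDE for $h_t$ is unchanged.

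First I would reuse the setup of Proposition~\ref{prop::sle8over3_radial_martingale}. With $\phi_t(z)=-i\log h_t(e^{iz})$ chosen so that $\phi_t(W_t)=W_t$, the commutation $h_t\circ g_t=\tilde g_t\circ\Phi_A$ together with $a(t)=a(A)+a(\tilde K_t)$ gives both $\partial_t a = X_1^2$ and
\[
\partial_t\phi_t(z)=X_1^2\cot\!\left(\tfrac{\phi_t(z)-W_t}{2}\right)-\phi_t'(z)\cot\!\left(\tfrac{z-W_t}{2}\right),
\]
derivations that depend only on the radial Loewner equation for $g_t$ and are therefore insensitive to the law of $W$. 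The fourth-order expansion at $z=W_t$ performed in that proof still yields $(\partial_t\phi_t')(W_t)=X_2^2/(2X_1)-4X_3/3+(X_1-X_1^3)/6$, and evaluating the same PDE at the regular point $z=O_t$ — using $\phi_t(O_t)-W_t=-2\vartheta_t$ and $O_t-W_t=-2\theta_t$ — gives a closed expression for $(\partial_t\phi_t)(O_t)$ and $(\partial_t\phi_t')(O_t)$ in terms of $X_1,Y_1,\theta_t,\vartheta_t$.

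Next I would differentiate each factor under the $\rho$-augmented SDE $dW_t=\sqrt{8/3}\,dB_t+(\rho/2)\cot\theta_t\,dt$, $dO_t=-\cot\theta_t\,dt$. The factor $|h_t'(0)|$ gives $d\log|h_t'(0)|=(X_1^2-1)dt$, unchanged from $\rho=0$. Itô on $X_1=\phi_t'(W_t)$ produces a martingale part $\sqrt{8/3}\,X_2\,dB_t$ and a drift with a new $(\rho X_2/2)\cot\theta_t\,dt$ contribution. For $Y_1=\phi_t'(O_t)$ only a drift appears since $O_t$ is of bounded variation; the $Y_1\cot\theta_t$-terms coming from $(\partial_t\phi_t')(O_t)$ and from $Y_1\,dO_t=-Y_1\cot\theta_t\,dt$ partially cancel, as happens already for $\phi_t(O_t)$: $d\phi_t(O_t)=-X_1^2\cot\vartheta_t\,dt$. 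Finally, combining $d\phi_t(W_t)=X_1\sqrt{8/3}\,dB_t+(\cdots)dt$ and $d\phi_t(O_t)$ with $d\theta_t=(1/2)\sqrt{8/3}\,dB_t+((\rho+2)/4)\cot\theta_t\,dt$ yields $d\log(\sin\vartheta_t/\sin\theta_t)$.

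Assembling everything into $dM_t/M_t=d\log M_t+\tfrac12 d\langle\log M\rangle_t$, the drift splits into three structurally independent pieces: one proportional to $X_1^2-1$ (fed by $d\log|h_t'(0)|$ and by the cross quadratic-covariation of $d\log X_1$ with $d\log(\sin\vartheta_t/\sin\theta_t)$), one proportional to $\cot\theta_t$ (fed by the $\rho$-drift of $W_t$ and by $d\theta_t$), and one proportional to $\cot\vartheta_t$ (fed by $d\phi_t(O_t)$ and by $d\vartheta_t$). The exponent $5/8$ in front of $\log X_1$ is fixed by the $\rho=0$ case of Proposition~\ref{prop::sle8over3_radial_martingale}; the $\cot\theta_t$-balance then fixes $3\rho/8$, the $\cot\vartheta_t$-balance fixes $\rho(3\rho+4)/32$, and the $X_1^2-1$-balance, after inclusion of the cross-variation correction, fixes $\alpha=5/48+(3/64)\rho(\rho+4)$. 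The identification $\alpha=\xi(\beta)$ for $\beta=5/8+\rho(3\rho+4)/32+3\rho/8$ is then a direct check against Equation~(\ref{eqn::halfplane_exponent}) and Equation~(\ref{eqn::beta_rho_relation}). The main obstacle is the algebraic bookkeeping — three different cotangent structures and a nontrivial cross quadratic-variation between $d\log X_1$ and $d\log(\sin\vartheta_t/\sin\theta_t)$ — but the matching is rigid: three linearly independent drift directions versus three unknown exponents forces a unique solution.
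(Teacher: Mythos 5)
Your approach is essentially the paper's: pass to $\phi_t(z)=-i\log h_t(e^{iz})$, derive the Loewner-type PDE $\partial_t\phi_t(z)=X_1^2\cot\bigl(\tfrac{\phi_t(z)-W_t}{2}\bigr)-\phi_t'(z)\cot\bigl(\tfrac{z-W_t}{2}\bigr)$, evaluate it at $W_t$ by Taylor expansion and at $O_t$ directly, apply It\^o under the $\rho$-augmented driving SDE, and balance the drift. One small slip: the chain-rule contribution to $d\phi_t'(O_t)$ is $\phi_t''(O_t)\,dO_t$ (not $Y_1\,dO_t$, which enters $d\phi_t(O_t)$); it is the $\phi_t''(O_t)\cot\theta_t$ terms that cancel, leaving the paper's $d\phi_t'(O_t)=\bigl(-\tfrac12X_1^2Y_1\csc^2\vartheta_t+\tfrac12Y_1\csc^2\theta_t\bigr)dt$, and the resulting drift algebra is richer than the ``three independent cotangent directions'' you describe (there are also $\csc^2$ and $X_2$-coupled cross-variation terms), though after simplification everything does cancel exactly as you expect.
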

\begin{figure}[ht!]
\begin{center}
\includegraphics[width=0.47\textwidth]{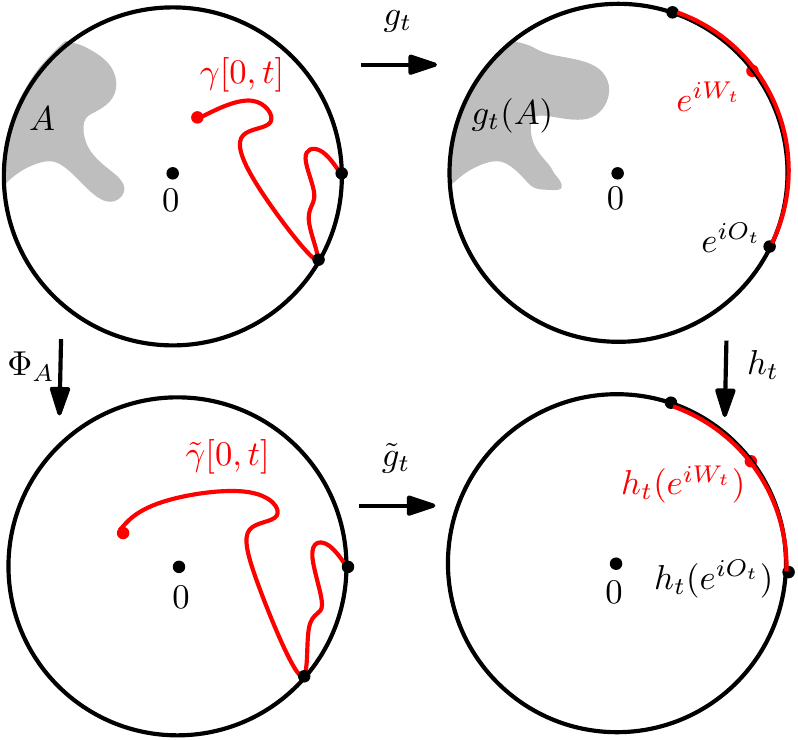}
\end{center}
\caption{\label{fig::sle8over3rho_radial_exchange} The map $\Phi_A$ is the conformal map from $\U\setminus A$ onto $\U$ with $\Phi_A(0)=0,\Phi_A(1)=1$. The map $g_t$ is the conformal map from $\U\setminus\gamma[0,t]$ onto $\U$ normalized at the origin. Define $\tilde{g}_t$ to be the conformal map from $\U\setminus\tilde{\gamma}[0,t]$ onto $\U$ normalized at the origin and $h_t$ the conformal map from $\U\setminus g_t(A)$ onto  $\U$ such that $h_t\circ g_t=\tilde{g}_t\circ\Phi_A$.}
\end{figure}

\begin{proof}
Define $\phi_t(z)=-i\log h_t(e^{iz})$ where $\log$ denotes the branch of the logarithm such that $-i\log h_t(e^{iW_t})=W_t$. Then
\[|h'_t(e^{iW_t})|=\phi'_t(W_t),\quad |h'_t(e^{iV_t})|=\phi'_t(V_t),\quad \vartheta_t=(\phi_t(W_t)-\phi_t(V_t))/2.\]
To simplify the notations, we set $X_1=\phi'_t(W_t), X_2=\phi''_t(W_t), Y_1=\phi'_t(V_t)$. By It\^o's Formula, we have that
\begin{equation*}
\begin{split}
d\phi_t(W_t)&=\sqrt{8/3}X_1dB_t+\left(-\frac{5}{3}X_2+\frac{\rho}{2}X_1\cot\theta_t\right)dt,\\
d\phi_t(V_t)&=-X_1^2\cot\vartheta_tdt,\\
d\phi'_t(W_t)&=\sqrt{8/3}X_2dB_t+\left(\frac{\rho}{2}X_2\cot\theta_t+\frac{X_2^2}{2X_1}+\frac{X_1-X_1^3}{6}\right)dt, \\
d\phi'_t(V_t)&=\left(-\frac{1}{2}X_1^2Y_1\frac{1}{\sin^2\vartheta_t}+\frac{1}{2}Y_1\frac{1}{\sin^2\theta_t}\right)dt,\\
d\theta_t&=\frac{\sqrt{8/3}}{2}dB_t + \frac{\rho+2}{4}\cot\theta_t dt,\\
d\vartheta_t&=\frac{\sqrt{8/3}}{2}X_1dB_t+\left(-\frac{5}{6}X_2+\frac{1}{2}X_1^2\cot\vartheta_t+\frac{\rho}{4}X_1\cot\theta_t\right)dt.
\end{split}\end{equation*}
Combining these identities,  we see that $M$ is a local martingale.
\end{proof}

\subsection{Relation between radial SLE and chordal SLE}\label{subsec::sle_radial_chordal}
Roughly speaking, chordal SLE is the limit of radial SLE when we let the interior target point go towards a boundary target point. Precisely, for $z\in\HH$, suppose $\varphi^z$ is the Mobius transformation from $\U$ onto $\HH$ that sends $0$ to $z$ and $1$ to $0$. We define radial SLE in $\HH$ from $0$ to $z$ as the image of radial SLE in $\U$ from 1 to 0 under $\varphi^z$. Then, as $y\to\infty$, radial SLE$_{\kappa}$ in $\HH$ from $0$ to $iy$ will converge to chordal SLE$_{\kappa}$ (under an appropriate topology).

\begin{proof}
Fix $R>0$, suppose $y>0$ large. Let $\gamma^{iy}$ be a radial SLE$_{\kappa}$ in $\HH$ from $0$ to $iy$ and let $\gamma$ be a chordal SLE$_{\kappa}$ in $\HH$ from $0$ to $\infty$. Let $\tau_R$ be the first time that the curve exits $\U(0,R)$. Set $\rho=6-\kappa$, and define
\[M_t(iy)=|g_t'(iy)|^{\rho(\rho+8-2\kappa)/(8\kappa)}(\Im g_t(iy))^{\rho^2/(8\kappa)}|g_t(iy)-W_t|^{\rho/\kappa}.\]
One can check that $M$ is a local martingale under the law of $\gamma$ (see \cite[Theorem 6]{SchrammWilsonSLECoordinatechanges}). Moreover, the measure weighted by $M(iy)/M_0(iy)$ is the same as the law of $\gamma^{iy}$ (after time-change). In particular, the Radon-Nikodym between the law of $\gamma^{iy}[0,\tau_R]$ and the law of $\gamma[0,\tau_R]$ is given by
\begin{eqnarray*}
\lefteqn{M_{\tau_R}(iy)/M_0(iy)}\\
&=&|g_{\tau_R}'(iy)|^{\rho(\rho+8-2\kappa)/(8\kappa)}\left(\frac{\Im g_{\tau_R}(iy)}{y}\right)^{\rho^2/(8\kappa)}\left(\frac{|g_{\tau_R}(iy)-W_{\tau_R}|}{y}\right)^{\rho/\kappa}
\end{eqnarray*}
which converges to 1 as $y\to\infty$.
\end{proof}

\newpage
\section{Radial conformal restriction}\label{sec::radial_restriction}
\subsection{Setup for radial restriction sample}
Let $\Omega$ be the collection of compact subset $K$ of $\overline{\U}$ such that
\[K\cap\partial\U=\{1\}, 0\in K, K \text{ is connected and } \U\setminus K \text{ is connected}.\]
Recall $\LA_r$ in Definition \ref{def::collection_radial}. Endow $\Omega$ with the $\sigma$-field generated by the events $[K\in\Omega: K\cap A=\emptyset]$ where $A\in\LA_r$. Clearly, a probability measure $\PP$ on $\Omega$ is characterized by the values of $\PP[K\cap A=\emptyset]$ for $A\in\LA_r$.
\begin{definition} A probability measure $\PP$ on $\Omega$ is said to satisfy radial restriction property if the following is true.
For any $A\in\LA_r$, $\Phi_A(K)$ conditioned on $[K\cap A=\emptyset]$ has the same law as $K$.
\end{definition}
\begin{theorem}\label{thm::radial_restriction}
Radial restriction measure have the following description.
\begin{enumerate}
\item [(1)] (Characterization) A radial restriction measure is characterized by a pair of real numbers $(\alpha,\beta)$ such that, for every $A\in\LA_r$,
\begin{equation}\label{eqn::radial_restriction_characterization}\PP[K\cap A=\emptyset]=|\Phi_A'(0)|^{\alpha}\Phi_A'(1)^{\beta}.\end{equation}
We denote the corresponding radial restriction measure by $\QQ(\alpha,\beta)$.
\item [(2)] (Existence) The measure $\QQ(\alpha,\beta)$ exists if and only if
\[\beta\ge 5/8, \alpha\le \xi(\beta)=\frac{1}{48}((\sqrt{24\beta+1}-1)^2-4).\]
\end{enumerate}
\end{theorem}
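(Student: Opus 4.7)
The plan follows the structure of Theorem \ref{thm::chordal_restriction}. For the \emph{characterization}, I would exploit the semigroup property of $\{\Phi_A:A\in\LA_r\}$: if $A,B\in\LA_r$ and $A\cdot B:=\Phi_B^{-1}(A)\cup B$ again lies in $\LA_r$, then $\Phi_{A\cdot B}=\Phi_A\circ\Phi_B$, so both $|\Phi_A'(0)|$ and $\Phi_A'(1)$ are multiplicative under $\cdot$. The restriction property makes $F(A):=-\log\PP[K\cap A=\emptyset]$ additive under $\cdot$ and monotone in $A$. To pin down the dependence of $F$ on $A$, I would mimic the local analysis used in the proof of Theorem \ref{thm::chordal_restriction}, adapted to the two distinguished points: study the asymptotics of $\PP[K\cap \U(z_0,\eps)\neq\emptyset]$ as $\eps\to 0$ for $z_0\in\U$ (interior scale at $0$) and for $z_0\in\partial\U\setminus\{1\}$ (boundary scale at $1$), using explicit expansions of $\Phi_{\overline{\U}(z_0,\eps)}$ near $0$ and $1$. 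The restriction property combined with additivity of $F$ under $\cdot$ forces each of these densities to be a multiple of the infinitesimal change of $|\Phi_A'(0)|$ and of $\Phi_A'(1)$ respectively; this produces two constants $\alpha,\beta$, and additivity promotes the local equality to the global identity $\PP[K\cap A=\emptyset]=|\Phi_A'(0)|^\alpha\Phi_A'(1)^\beta$ for all $A\in\LA_r$.

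For \emph{existence} when $\beta\geq 5/8$ and $\alpha\leq\xi(\beta)$, I would assemble $\QQ(\alpha,\beta)$ in two stages. \emph{Stage 1} constructs $\QQ(\xi(\beta),\beta)$ by a two-sided radial SLE$_{8/3}(\rho)$ sample, in exact analogy with the chordal construction of $\PP(\beta)$ given at the end of Section \ref{sec::chordal_restriction}. Choose $\rho\geq 0$ with $\beta=5/8+\rho(3\rho+4)/32+3\rho/8$; a short algebraic identity shows that the exponent $\alpha=5/48+3\rho(\rho+4)/64$ appearing in Proposition \ref{prop::sle8over3rho_radial_mart} equals $\xi(\beta)$. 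Sample a radial SLE$^L_{8/3}(\rho)$ from $1$ to $0$ in $\overline{\U}$ as the right boundary of $K$; conditionally on this curve, sample in the left complementary component an appropriate symmetric radial SLE$^R_{8/3}(\tilde\rho)$ as the left boundary; let $K$ be the closed hull enclosed between the two curves. Optional stopping applied to the local martingale $M_t$ of Proposition \ref{prop::sle8over3rho_radial_mart} at the first hitting time of $A$, combined with the conditional restriction property of the second curve, then yields $\PP[K\cap A=\emptyset]=|\Phi_A'(0)|^{\xi(\beta)}\Phi_A'(1)^\beta$. \emph{Stage 2} handles $\alpha<\xi(\beta)$: take an independent Poisson point process of Brownian loops with intensity $(\xi(\beta)-\alpha)\mu^{loop}_{\U,0}$ and replace $K$ by the fill-in of its union with these loops. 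By Theorem \ref{thm::ppp_bl_restriction} this multiplies the restriction probability by $|\Phi_A'(0)|^{-(\xi(\beta)-\alpha)}$, producing exactly $\QQ(\alpha,\beta)$.

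For \emph{necessity}, the bound $\beta\geq 5/8$ follows by localization near $1$: taking hulls $A$ supported in a small neighborhood of some fixed $w_0\in\partial\U\setminus\{1\}$ and away from $0$, the factor $|\Phi_A'(0)|$ tends to $1$ and the restriction relation degenerates into a chordal-type restriction property at $1$ with exponent $\beta$, so Theorem \ref{thm::chordal_restriction} forces $\beta\geq 5/8$. The inequality $\alpha\leq\xi(\beta)$ is the subtler point and is where I expect the main technical obstacle. The strategy is to identify the right boundary of a $\QQ(\alpha,\beta)$ sample, viewed as a curve from $1$ to $0$, as a radial SLE$_{8/3}(\rho)$ with $\rho=\rho(\beta)$; this identification pins down the maximal interior exponent achievable from a pure SLE construction, namely $\xi(\beta)$, while the defect $\xi(\beta)-\alpha$ must be nonnegative because it has to correspond to the intensity of an independent Poisson cloud of Brownian loops surrounding $0$, which is by construction nonnegative. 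Rigorously extracting a radial SLE$_{8/3}(\rho)$ boundary curve from an abstract radial restriction sample — the analogue of the SLE identification carried out in Section \ref{sec::chordal_restriction} in the chordal setting — is the technical core of the argument and the most demanding step in the whole proof.
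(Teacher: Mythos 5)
Your proposal aligns with the paper for the existence direction (two-sided radial SLE$_{8/3}(\rho)$ to build $\QQ(\xi(\beta),\beta)$, then Brownian loops to lower $\alpha$), but the characterization step contains a genuine gap, and your necessity argument for $\alpha\le\xi(\beta)$ diverges from the paper onto a harder road.

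\textbf{Characterization.} Your plan — additivity of $F(A)=-\log\PP[K\cap A=\emptyset]$ under $A\cdot B=\Phi_B^{-1}(A)\cup B$, plus a one-point local rate $\lambda$ — does not by itself pin down the two-parameter form, and the step ``additivity promotes the local equality to the global identity'' is where the argument breaks. In the chordal case the missing ingredient is scale-invariance: it forces $\lambda(x)=c\,x^{-2}$, and additivity then does the rest. In the radial case there is \emph{no} such symmetry; the M\"obius subgroup fixing both marked points ($0$ interior and $1\in\partial\U$) is trivial, so nothing constrains the shape of $\lambda$ a priori. The paper overcomes this by computing the \emph{two-point} quantity $\PP[K\cap\U(x,\eps)\neq\emptyset,\ K\cap\U(y,\delta)\neq\emptyset]$ in two orders and imposing the resulting Commutation Relation~(\ref{eqn::commutation_relation}); sending $y\to x$ yields an ODE for $\lambda$, which solves to $\lambda(x)=(c_0+c_2x^2)/(x^2(1+x^2)^2)$ and hence to exactly two free constants, identified as $(\alpha,\beta)$. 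Your sketch omits this two-point computation entirely, and without it the additive cocycle $F$ is not shown to be spanned by $\log|\Phi_A'(0)|$ and $\log\Phi_A'(1)$. (A side issue: you propose studying interior balls $\U(z_0,\eps)$ with $z_0\in\U$, but such sets are not in $\LA_r$ — removing an interior disk leaves an annulus, not a simply connected domain — so all the local analysis must be carried out at boundary points.)

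\textbf{Necessity $\alpha\le\xi(\beta)$.} The paper's route is shorter and avoids the curve-identification you flag as ``the technical core.'' Lemma~\ref{lem::radial_restriction_smallalpha} shows that adding a Poisson cloud of Brownian loops of intensity $c>0$ to a $\QQ(\alpha_0,\beta)$ sample gives $\QQ(\alpha_0-c,\beta)$ and a.s. pulls the origin into the interior of $K$; the explicit SLE construction of $\QQ(\xi(\beta),\beta)$ shows the origin is a.s. \emph{on} the boundary of $K$. If some $\QQ(\alpha_0,\beta)$ existed with $\alpha_0>\xi(\beta)$, adding loops of intensity $\alpha_0-\xi(\beta)$ would produce a realization of $\QQ(\xi(\beta),\beta)$ with $0$ a.s. in the interior — contradicting the SLE construction by the (already established) uniqueness. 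Your alternative — extract the right boundary of an abstract $\QQ(\alpha,\beta)$ sample, identify it as radial SLE$_{8/3}(\rho)$, and then argue the residual must be a nonnegative loop intensity — is plausible in spirit and mirrors what the chordal proof does to rule out $\beta<5/8$, but the paper does not do this in the radial setting precisely because the Lemma~\ref{lem::radial_restriction_smallalpha} dichotomy is so much cheaper. Your $\beta\ge5/8$ argument is morally the same as the paper's (degenerate the interior point to a boundary point and invoke Theorem~\ref{thm::chordal_restriction}), though the paper carries it out by mapping to $\HH$ with marked points $i$ and $0$ and sending the interior point to $\infty$, rather than by localizing hulls near a fixed $w_0$, and the latter as you state it does not obviously isolate a chordal exponent since shrinking $A$ makes \emph{both} $|\Phi_A'(0)|$ and $\Phi_A'(1)$ tend to $1$.
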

\begin{remark} We already know the existence of $\QQ(5/48,5/8)$ when $K$ is radial SLE$_{8/3}$. Recall Theorem \ref{thm::ppp_bl_restriction}, if we take an independent Poisson point process with intensity $\alpha\mu^{loop}_{\U,0}$, the ``fill-in" of the union of the Poisson point process and radial SLE$_{8/3}$ would give $\QQ(5/48-\alpha,5/8)$.
\end{remark}
\begin{remark}
In Equation (\ref{eqn::radial_restriction_characterization}), we have that $|\Phi_A'(0)|\ge 1$ and $\Phi_A'(1)\le 1$. Since $\beta$ is positive, we have $\Phi_A'(1)^{\beta}\le 1$. But $\alpha$ can be negative or positive, so that $|\Phi_A'(0)|^{\alpha}$ can be greater than 1. The product $|\Phi_A'(0)|^{\alpha}\Phi_A'(1)^{\beta}$ is always less than 1 which is guaranteed by the condition that $\alpha\le\xi(\beta)$. (In fact, we always have $|\Phi_A'(0)|\Phi_A'(1)^2\le 1$.)
\end{remark}
\begin{remark}
While the class of chordal restriction measures is characterized by one single parameter $\beta\ge 5/8$, the class of radial restriction measures involves the additional parameter $\alpha$. This is due to the fact that the radial restriction property is in a sense weaker than the chordal one: the chordal restriction samples in $\HH$ are scale-invariant, while the radial ones are not.
\end{remark}

\subsection{Proof of Theorem \ref{thm::radial_restriction}. Characterization.}
Suppose that $K$ satisfies Equation (\ref{eqn::radial_restriction_characterization}) for any $A\in\LA_r$, then $K$ satisfies radial restriction property. Thus, we only need to show that there exist only two-degree of freedom for the radial restriction measures. 

It is easier to carry out the calculation in the upper half-plane $\HH$ instead of $\U$. Suppose that $K$ satisfies radial restriction property in $\HH$ with interior point $i$ and the boundary point $0$. In other words, $K$ is the image of radial restriction sample in $\U$ under the conformal map $\varphi(z)=i(1-z)/(1+z)$. The proof consists of six steps. 
\medbreak
\noindent\textbf{Step 1.} For any $x\in \R\setminus\{0\}$, the probability $\PP[K\cap \U(x,\eps)\neq\emptyset]$ decays like $\eps^2$ as $\eps$ goes to zero. And the limit
\[\lim_{\eps\to 0}\PP[K\cap \U(x,\eps)\neq\emptyset]/\eps^2\]
exists which we denote by $\lambda(x)$. Furthermore $\lambda(x)\in (0,\infty)$.
\medbreak
\noindent\textbf{Step 2.} The function $\lambda$ is continuous and differentiable for $x\in (-\infty,0)\cup(0,\infty)$.
We omit the proof for Steps 1 and 2, and the interested readers could consult \cite{WuConformalRestrictionRadial}.
\medbreak
\noindent\textbf{Step 3.} Fix $x,y\in \R\setminus\{0\}$. We estimate the probability of
\[\PP[K\cap \U(x,\eps)\neq\emptyset, K\cap \U(y,\delta)\neq\emptyset].\]
Clearly, it decays like $\eps^2\delta^2$ as $\eps,\delta$ go to zero. Define $f_{x,\eps}$ to be the conformal map from $\HH\setminus \U(x,\eps)$ onto $\HH$ that fixes 0 and $i$. In fact,
we can write out the exact expression of $f_{x,\eps}$: suppose $0<\eps<|x|$. Then
\[g_{x,\eps}(z):=z+\frac{\eps^2}{z-x}\]
is a conformal map from $\HH\setminus \U(x,\eps)$ onto $\HH$. Define
\[f_{x,\eps}(z)= b\frac{g_{x,\eps}(z)-c}{b^2+(c-a)(g_{x,\eps}(z)-a)}\]
where $a=\Re(g_{x,\eps}(i)), b=\Im(g_{x,\eps}(i)), c=g_{x,\eps}(0)$. Then $f_{x,\eps}$ is the conformal map from $\HH\setminus \U(x,\eps)$ onto $\HH$ that preserves 0 and $i$.
\begin{lemma}\label{lem::two_point_estimate}
\begin{eqnarray*}
\lefteqn{\lim_{\eps\to 0}\lim_{\delta\to 0}\frac{1}{\eps^2\delta^2}\PP[K\cap \U(x,\eps)\neq\emptyset, K\cap \U(y,\delta)\neq\emptyset]}\\
&=&\lambda(x)\lambda(y)-\lambda'(y)F(x,y)-2\lambda(y)G(x,y)
\end{eqnarray*}
where
\[F(x,y)=\lim_{\eps\to 0}\frac{1}{\eps^2}(f_{x,\eps}(y)-y)=\frac{1+x^2+y^2+xy}{x(1+x^2)}+\frac{1}{y-x},\]
\[G(x,y)=\lim_{\eps\to 0}\frac{1}{\eps^2}(f'_{x,\eps}(y)-1)=\frac{x+2y}{x(1+x^2)}-\frac{1}{(y-x)^2}.\]
\end{lemma}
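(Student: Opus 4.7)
The plan is to use the radial restriction property to transfer the event $\{K\cap \U(y,\delta)\neq\emptyset\}$ through the conformal map $f_{x,\eps}$ after conditioning on $\{K\cap \U(x,\eps)=\emptyset\}$, then Taylor expand both the geometric and probabilistic ingredients in $\eps^2$. The starting identity will be the decomposition
\[
\PP[K\cap \U(y,\delta)\neq\emptyset] \;=\; \PP[K\cap \U(x,\eps)\neq\emptyset,\,K\cap \U(y,\delta)\neq\emptyset] \;+\; \PP[K\cap \U(x,\eps)=\emptyset]\cdot\PP[K\cap \U(y,\delta)\neq\emptyset\cond K\cap \U(x,\eps)=\emptyset].
\]
Since $\U(x,\eps)$ lies in the analogue of $\LA_r$ for the half-plane with fixed points $0$ and $i$, the radial restriction property tells us that conditionally on $\{K\cap \U(x,\eps)=\emptyset\}$ the image $f_{x,\eps}(K)$ has the same law as $K$. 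Consequently the conditional probability above equals $\PP[K\cap f_{x,\eps}(\U(y,\delta))\neq\emptyset]$.

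Next I would use the fact that as $\delta\to 0$ the set $f_{x,\eps}(\U(y,\delta))$ is, up to higher-order error, a Euclidean disk centered at $f_{x,\eps}(y)$ with radius $f'_{x,\eps}(y)\,\delta$. By Step 1 of the characterization argument (applied in a uniform version on compact subsets of $\R\setminus\{0\}$), for a fixed $\eps>0$ the intersection probability with this quasi-disk behaves as
\[
\lim_{\delta\to 0}\frac{1}{\delta^2}\PP[K\cap f_{x,\eps}(\U(y,\delta))\neq\emptyset]=\lambda(f_{x,\eps}(y))\,f'_{x,\eps}(y)^2.
\]
Solving the decomposition for the joint probability and dividing by $\delta^2$ therefore yields
\[
\lim_{\delta\to 0}\frac{1}{\delta^2}\PP[K\cap \U(x,\eps)\neq\emptyset,\,K\cap \U(y,\delta)\neq\emptyset]=\lambda(y)-\bigl(1-\PP[K\cap \U(x,\eps)\neq\emptyset]\bigr)\lambda(f_{x,\eps}(y))f'_{x,\eps}(y)^2.
\]

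Now I expand each factor in $\eps^2$ using $\PP[K\cap \U(x,\eps)\neq\emptyset]=\lambda(x)\eps^2+o(\eps^2)$ (Step 1), the differentiability of $\lambda$ from Step 2 giving $\lambda(f_{x,\eps}(y))=\lambda(y)+\lambda'(y)F(x,y)\eps^2+o(\eps^2)$, and $f'_{x,\eps}(y)^2=1+2G(x,y)\eps^2+o(\eps^2)$. The $O(1)$ terms cancel, the $\eps^2$ terms combine into $\lambda(x)\lambda(y)-\lambda'(y)F(x,y)-2\lambda(y)G(x,y)$, and dividing by $\eps^2$ and letting $\eps\to 0$ gives the claimed limit. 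Finally, the explicit formulas for $F$ and $G$ come from a direct Taylor expansion of
\[
f_{x,\eps}(z)=b\,\frac{g_{x,\eps}(z)-c}{b^2+(c-a)(g_{x,\eps}(z)-a)}
\]
with $g_{x,\eps}(z)=z+\eps^2/(z-x)$, $a=\Re g_{x,\eps}(i)=-x\eps^2/(1+x^2)$, $b=\Im g_{x,\eps}(i)=1-\eps^2/(1+x^2)$, $c=g_{x,\eps}(0)=-\eps^2/x$; after collecting terms one recovers the stated closed forms.

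The main obstacle is the uniformity needed to interchange the two limits: the statement that $\PP[K\cap f_{x,\eps}(\U(y,\delta))\neq\emptyset]/\delta^2$ tends to $\lambda(f_{x,\eps}(y))f'_{x,\eps}(y)^2$ must hold uniformly in $\eps$ in a neighborhood of $0$, since we take $\delta\to 0$ before $\eps\to 0$ and must control the $o(\eps^2)$ remainders. This is where the results on the regularity of $\lambda$ (Step 2) and on the control of the conformal distortion (the fact that $f_{x,\eps}(\U(y,\delta))$ is sandwiched between disks of radii $(1\pm o(1))f'_{x,\eps}(y)\delta$ for small $\delta$, uniformly for $\eps$ small) are essential; I expect these to follow from standard distortion theorems (Koebe) applied to $f_{x,\eps}$ at the point $y$, whose derivative stays bounded away from $0$ and $\infty$ for $\eps$ small.
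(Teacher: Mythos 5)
Your proposal is correct and follows essentially the same route as the paper's proof: the same decomposition using $\PP[K\cap\U(y,\delta)\neq\emptyset]$, the same application of radial restriction to turn the conditional probability into $\PP[K\cap f_{x,\eps}(\U(y,\delta))\neq\emptyset]$, the same $\delta\to 0$ limit yielding $\lambda(f_{x,\eps}(y))f'_{x,\eps}(y)^2$, and the same $\eps^2$ Taylor expansion producing the cancellation of the $O(1)$ terms. Your extra remarks on the uniformity (Koebe distortion control of $f_{x,\eps}(\U(y,\delta))$, uniform version of Step~1) identify a point the paper leaves implicit and deflects to the cited reference, so they are a welcome clarification rather than a departure.
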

\begin{proof}
By radial restriction property, we have that
\begin{eqnarray*}
\lefteqn{\PP[K\cap \U(x,\eps)=\emptyset, K\cap \U(y,\delta)\neq\emptyset]}\\
&=&\PP[K\cap \U(x,\eps)=\emptyset]\times \PP[K\cap \U(y,\delta)\neq\emptyset\cond K\cap \U(x,\eps)=\emptyset]\\
&=&\PP[K\cap \U(x,\eps)=\emptyset]\times \PP[K\cap f_{x,\eps}(\U(y,\delta))\neq\emptyset]
\end{eqnarray*}
Thus,
\begin{eqnarray*}
\lefteqn{\lim_{\eps\to 0}\lim_{\delta\to 0}\frac{1}{\eps^2\delta^2}\PP[K\cap \U(x,\eps)\neq\emptyset, K\cap \U(y,\delta)\neq\emptyset]}\\
&=&\lim_{\eps\to 0}\lim_{\delta\to 0}\frac{1}{\eps^2\delta^2}\\
&&\quad \times\left(\PP[K\cap \U(y,\delta)\neq\emptyset]-\PP[K\cap \U(x,\eps)=\emptyset]\times \PP[K\cap f_{x,\eps}(\U(y,\delta))\neq\emptyset]\right)\\
&=&\lim_{\eps\to 0}\frac{1}{\eps^2}\left(\lambda(y)-\PP[K\cap \U(x,\eps)=\emptyset]\lambda(f_{x,\eps}(y))|f'_{x,\eps}(y)|^2\right)\\
&=&\lim_{\eps\to 0}\frac{1}{\eps^2}\left(\PP[K\cap \U(x,\eps)\neq\emptyset]\lambda(f_{x,\eps}(y))|f'_{x,\eps}(y)|^2+\lambda(y)-\lambda(f_{x,\eps}(y))|f'_{x,\eps}(y)|^2\right)\\
&=&\lambda(x)\lambda(y)-\lambda'(y)F(x,y)-2\lambda(y)G(x,y).
\end{eqnarray*}
\end{proof}
\medbreak
\noindent\textbf{Step 4.} We are allowed to exchange the order in taking the limits in Lemma \ref{lem::two_point_estimate}. In other words, we have that
\begin{equation}\label{eqn::commutation_relation}
\lambda'(y)F(x,y)+2\lambda(y)G(x,y)=\lambda'(x)F(y,x)+2\lambda(x)G(y,x).
\end{equation}
We call this equation the \textbf{Commutation Relation}.
\medbreak
\noindent\textbf{Step 5.} Solve the function $\lambda$ through Commutation Relation (\ref{eqn::commutation_relation}).
\begin{lemma}\label{lem::lambda_expression} There exists two constants $c_0\ge 0,c_2\ge 0$ such that
\[\lambda(x)=\frac{c_0+c_2x^2}{x^2(1+x^2)^2}.\]
\end{lemma}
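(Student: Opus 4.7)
The commutation relation \eqref{eqn::commutation_relation} is linear in $\lambda$, so its solutions form a real vector space; the plan is to (i) exhibit two linearly independent solutions, and (ii) show the solution space has dimension at most two.

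For (i), I would take $\lambda_0(x)=1/(x^2(1+x^2)^2)$ and $\lambda_2(x)=1/(1+x^2)^2$ and verify each by direct substitution into \eqref{eqn::commutation_relation}. Using the compact form $F(x,y)=y(1+y^2)/(x(1+x^2)(y-x))$, this reduces in either case to clearing denominators and checking a polynomial identity in $(x,y)$. A useful sanity check is that both functions arise as boundary densities of concrete radial restriction samples (radial SLE$_{8/3}$ contributes $\lambda_0$, and the harmonic-measure-squared asymptotic $\sim 1/(1+x^2)^2$ contributes $\lambda_2$), so the answer is dictated by samples that are already known to exist.

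For (ii), set $y=x+\varepsilon$ in \eqref{eqn::commutation_relation} and Taylor expand in $\varepsilon$. The explicit $1/(y-x)^k$ singularities in $F$ and $G$ combine to give the symmetric expression $[\lambda'(y)+\lambda'(x)]/(y-x)-2[\lambda(y)-\lambda(x)]/(y-x)^2$, whose $\varepsilon^{-1}$ and $\varepsilon^0$ terms vanish identically and whose $\varepsilon^j$ coefficient for $j\ge 1$ equals $j\,\lambda^{(j+2)}(x)/(j+2)!$. The regular parts $\bar F=P/(x(1+x^2))$ and $\bar G=(x+2y)/(x(1+x^2))$ also expand polynomially in $\varepsilon$, and collecting the $\varepsilon^1$ and $\varepsilon^2$ coefficients produces a pair of linear relations among $\lambda,\lambda',\ldots,\lambda^{(4)}$; eliminating $\lambda^{(3)}$ and $\lambda^{(4)}$ between them yields a second-order linear ODE in $\lambda$ with rational coefficients. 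Hence the solution space is at most two-dimensional, and since $\lambda_0,\lambda_2$ are linearly independent on $(0,\infty)$ (their ratio $x^2$ is non-constant), they span it, giving
\[
\lambda(x) \;=\; c_0\,\lambda_0(x)+c_2\,\lambda_2(x) \;=\; \frac{c_0+c_2 x^2}{x^2(1+x^2)^2}.
\]
Finally, since $\lambda\ge 0$ by definition (it is a limit of probabilities rescaled by $\varepsilon^2$), the asymptotics $\lambda(x)\sim c_0/x^2$ as $x\to 0$ and $\lambda(x)\sim c_2/x^4$ as $|x|\to\infty$ force $c_0\ge 0$ and $c_2\ge 0$.

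The hardest step is the bookkeeping in (ii): the naive $\varepsilon^1$-coefficient alone gives a third-order ODE (because of the $\lambda'''/6$ contribution from the singular part), and one must genuinely use the $\varepsilon^2$ compatibility condition to reduce to a second-order ODE and thus a two-parameter family. An equivalent and perhaps cleaner route is to fix a generic reference point $y_0\in\R\setminus\{0\}$, view \eqref{eqn::commutation_relation} as a first-order linear ODE in $x$ whose inhomogeneous term is linear in the two numbers $\lambda(y_0)$ and $\lambda'(y_0)$, and then enforce consistency as $y_0$ varies; this exhibits the solution space as at most two-dimensional without ever writing down an explicit higher-order ODE.
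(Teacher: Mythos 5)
Your step (i), verifying the individual candidate solutions and the linearity of the commutation relation, is fine, and the expansion of the singular part around the diagonal is correctly computed; the $\varepsilon^1$ coefficient indeed gives the third-order ODE that, after the substitution $P(x)=x^2(1+x^2)^2\lambda(x)$, reads $P'''=0$. But step (ii) has a genuine gap. The $\varepsilon^2$ (and all higher) coefficients of the expansion around $y=x$ are \emph{automatically} annihilated by any $C^\infty$ solution of $P'''=0$ on a half-line: the commutation relation restricted to $x,y$ of the same sign is analytic in $y$ for fixed $x$, so its vanishing on a neighbourhood of the diagonal is equivalent to $P'''=0$, and the full three-parameter family $P(x)=c_0+c_1x+c_2x^2$ satisfies it there. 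In particular $\lambda_1(x)=1/(x(1+x^2)^2)$ is a third linearly independent solution on $(0,\infty)$ that your scheme misses; collecting the $\varepsilon^2$ coefficient cannot eliminate it, and no amount of local expansion near the diagonal can reduce the solution space below dimension three on a single half-line. The same objection applies to your ``first-order ODE in $x$ with $y_0$ fixed'' variant, as long as $x$ stays on the same component of $\R\setminus\{0\}$ as $y_0$.

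The missing ingredients, which is precisely what the paper's argument uses, are \emph{global} and \emph{cross the origin}: first, $\lambda$ is even (this follows from the reflection symmetry of the radial restriction sample, not from the commutation relation), which forces $P(x)=c_0+c_1|x|+c_2x^2$ on all of $\R\setminus\{0\}$; second, one substitutes this into the commutation relation with $x>0>y$, and the resulting identity forces $c_1=0$. Your Taylor expansion, being centred on the diagonal $y=x$, never sees the two half-lines interact and so cannot detect the $c_1$ degree of freedom. Finally, the proposed ``sanity check'' is incorrect: neither $\lambda_0$ nor $\lambda_2$ individually corresponds to an existing radial restriction measure. Using Step 6 of the paper, $(c_0,c_2)=(1,0)$ gives $\beta=1/2<5/8$ and $(c_0,c_2)=(0,1)$ gives $\beta=0$; the law of radial SLE$_{8/3}$, namely $\QQ(5/48,5/8)$, corresponds to the mixture $c_0=5/4$, $c_2=5/6$, not to $\lambda_0$ alone.
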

\begin{proof}
In Commutation Relation (\ref{eqn::commutation_relation}), let $y\to x$, we obtain a differential equation for the function $\lambda$. To write it in a better way, define
\[P(x)=x^2(1+x^2)^2\lambda(x),\] then the differential equation becomes
\[P'''(x)=0.\] 
Moreover, we know that $\lambda$ is an even function.
Thus, there exist three constants $c_0,c_1,c_2$ such that
\[\lambda(x)=\frac{c_0+c_1x+c_2x^2}{x^2(1+x^2)^2},\quad\text{for }x>0;\]
\[\lambda(x)=\frac{c_0-c_1x+c_2x^2}{x^2(1+x^2)^2},\quad\text{for }x<0.\]
We plugin these identities in Commutation Relation (\ref{eqn::commutation_relation}) and take $x>0>y$, then we get $c_1=0$. Since $\lambda$ is positive, we have $c_0\ge 0, c_2\ge 0$.
\end{proof}
\medbreak
\noindent\textbf{Step 6.} Find the relation between $(\alpha,\beta)$ and $(c_0,c_2)$.
Since there are only two-degree of freedom, when $K$ satisfies radial restriction property, we must have that Equation (\ref{eqn::radial_restriction_characterization}) holds for some $\alpha,\beta$. Note that
\[\PP[K\cap \U(x,\eps)\neq\emptyset]\sim\lambda(x)\eps^2.\]
Compare it with
\[1-|f_{x,\eps}'(i)|^{\alpha}f_{x,\eps}'(0)^{\beta},\] we have that
\[\alpha=(c_0-c_2)/4,\quad \beta=c_0/2.\]

\subsection{Several basic observations of radial restriction property}
Recall a result for Brownian loop: Theorem \ref{thm::ppp_bl_restriction}. Let $(l_j,j\in J)$ be a Poisson point process with intensity $c\mu^{loop}_{\U,0}$ for some $c>0$. Set $\Sigma=\cup_{j}l_j$. Then we have that, for any $A\in\LA_r$,
\[\PP[\Sigma\cap A=\emptyset]=|\Phi_A'(0)|^{-c}.\]
Suppose $K_0$ is a radial restriction sample whose law is $\QQ(\alpha_0,\beta_0)$. Take $K$ as the ``fill-in" of the union of $\Sigma$ and $K_0$, then clearly, $K$ has the law of $\QQ(\alpha_0-c,\beta_0)$. Thus we derived the following lemma.
\begin{lemma}\label{lem::radial_restriction_smallalpha}
If the radial restriction measure exists for some $(\alpha_0,\beta_0)$, then $\QQ(\alpha,\beta_0)$ exists for all $\alpha<\alpha_0$. Furthermore, almost surely for $\QQ(\alpha,\beta_0)$, the origin is not on the boundary of $K$.
\end{lemma}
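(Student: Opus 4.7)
The plan is constructive: starting from a sample $K_0 \sim \QQ(\alpha_0,\beta_0)$, I will add an independent Brownian loop soup to produce a sample of $\QQ(\alpha,\beta_0)$. Fix $\alpha<\alpha_0$ and set $c=\alpha_0-\alpha>0$. Let $(l_j,j\in J)$ be a Poisson point process of intensity $c\,\mu^{loop}_{\U,0}$, independent of $K_0$, and set $\Sigma=\bigcup_j l_j$. Define $K$ to be the \emph{fill-in} of $\Sigma\cup K_0$, i.e.\ the complement in $\bar{\U}$ of the unique component of $\U\setminus(\Sigma\cup K_0)$ whose closure meets $\partial\U\setminus\{1\}$.

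First I would verify that $K\in\Omega$. Since every $l_j$ is compactly contained in $\U$, we have $K\cap\partial\U=K_0\cap\partial\U=\{1\}$, and $0\in K_0\subset K$. Each loop $l_j$ surrounds $0$, so the region it encloses (which is absorbed by the fill-in) is connected to $K_0$ through the origin; hence $K$ is connected. Finally, $\U\setminus K$ is, by construction, a single component.

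The key computation rests on the topological observation that any $A\in\LA_r$ must satisfy $A\cap\partial\U\neq\emptyset$; otherwise $A$ would be a compact subset of $\U$, forcing $\U\setminus A$ to fail to be simply connected. Since every loop $l_j$ lies in $\U$ and therefore cannot enclose a set meeting $\partial\U$, we obtain the event identity
\[
\{K\cap A=\emptyset\}=\{\Sigma\cap A=\emptyset\}\cap\{K_0\cap A=\emptyset\}.
\]
By independence, Theorem~\ref{thm::ppp_bl_restriction}, and the hypothesis on $K_0$,
\[
\PP[K\cap A=\emptyset]=|\Phi_A'(0)|^{-c}\cdot|\Phi_A'(0)|^{\alpha_0}\Phi_A'(1)^{\beta_0}=|\Phi_A'(0)|^{\alpha}\Phi_A'(1)^{\beta_0},
\]
and the characterization part of Theorem~\ref{thm::radial_restriction} then identifies the law of $K$ as $\QQ(\alpha,\beta_0)$.

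For the last assertion, the Brownian loop measure $\mu^{loop}_{\U,0}$ places infinite mass on loops of arbitrarily small diameter surrounding $0$: by scale invariance of the whole-plane loop measure, the mass of loops surrounding $0$ with diameter in $[r,2r]$ is a positive constant independent of $r$ (and for $r$ small these loops are automatically contained in $\U$). Hence almost surely $\Sigma$ contains loops surrounding $0$ with diameters tending to zero; their enclosures form a nested sequence of neighborhoods of $0$ in $K$, so $0$ lies in the interior of $K$. The step I expect to require the most care is the topological fact that every $A\in\LA_r$ touches $\partial\U$, since this is what allows one to reduce $\{K\cap A=\emptyset\}$ to the intersection of the two simpler events above; once that is in place, the result is a direct combination of the loop-soup restriction identity with the characterization theorem.
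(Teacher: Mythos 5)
Your proposal is correct and takes essentially the same approach as the paper: both construct $\QQ(\alpha,\beta_0)$ as the fill-in of the union of a $\QQ(\alpha_0,\beta_0)$ sample with an independent Poisson point process of Brownian loops surrounding the origin of intensity $(\alpha_0-\alpha)\mu^{loop}_{\U,0}$, then invoke Theorem~\ref{thm::ppp_bl_restriction} and the characterization. You supply the topological verifications (the event identity via $A\cap\partial\U\neq\emptyset$, membership in $\Omega$, and the small-loop argument for why $0$ lies in the interior) that the paper passes over with ``clearly.''
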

In the next subsection, we will construct $\QQ(\xi(\beta),\beta)$ for $\beta\ge 5/8$ and point out that if $K$ has the law of $\QQ(\xi(\beta),\beta)$, almost surely the origin is on the boundary of $K$. Thus, combine with Lemma \ref{lem::radial_restriction_smallalpha}, we could show that, when $\beta\ge 5/8$, $\QQ(\alpha,\beta)$ exists if and only if $\alpha\le \xi(\beta)$. Note that, radial SLE$_{8/3}$ has the same law as $\QQ(5/48,5/8)$ where $5/48=\xi(5/8)$.
\medbreak
Another basic observation is that $\QQ(\alpha,\beta)$ does not exist when $\beta<5/8$. Suppose $K^0$ is a radial restriction sample with law $\QQ(\alpha,\beta)$. For any interior point $z\in\HH$, we define $K^z$ as the image of $K^0$ under the Mobius transformation from $\U$ onto $\HH$ such that sends 1 to $0$ and $0$ to $z$. Similar as the relation between radial SLE and chordal SLE in Subsection \ref{subsec::sle_radial_chordal},  if we let $z\to\infty$, $K^z$ converges weakly toward some probability measure, and the limit measure satisfies chordal restriction property with exponent $\beta$, thus $\beta\ge 5/8$.

\subsection{Construction of radial restriction measure $\QQ(\xi(\beta),\beta)$ for $\beta>5/8$}
The construction of $\QQ(\xi(\beta),\beta)$ is very similar to the construction of $\PP(\beta)$.
\begin{proposition}
Fix $\beta>5/8$ and let
\[\rho=\rho(\beta)=\frac{1}{3}(-8+2\sqrt{24\beta+1}).\]
Let $\gamma^R$ be a radial SLE$^L_{8/3}(\rho)$ in $\bar{\U}$ from 1 to 0. Given $\gamma^R$, let $\gamma^L$ be an independent chordal SLE$^R_{8/3}(\rho-2)$ in $\overline{\U\setminus\gamma^R}$ from $1^-$ to 0. Let $K$ be the closure of the union of the domains between $\gamma^L$ and $\gamma^R$. See Figure \ref{fig::construction_maximal_sample}. Then the law of $K$ is $\QQ(\xi(\beta),\beta)$. In particular, the origin is almost surely on the boundary of $K$.
\end{proposition}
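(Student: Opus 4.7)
The plan is to verify $\PP[K \cap A = \emptyset] = |\Phi_A'(0)|^{\xi(\beta)} \Phi_A'(1)^{\beta}$ for every $A \in \LA_r$; by the characterization part of Theorem~\ref{thm::radial_restriction}, this identifies the law of $K$ as $\QQ(\xi(\beta), \beta)$. Mirroring the chordal construction, I would condition on $\gamma^R$: by the conditional independence of $\gamma^L$ given $\gamma^R$,
\[
\PP[K \cap A = \emptyset \mid \gamma^R] = \mathbf{1}_{\{\gamma^R \cap A = \emptyset\}} \cdot \PP[\gamma^L \cap A = \emptyset \mid \gamma^R].
\]
Since $\gamma^L$ is chordal $\mathrm{SLE}^R_{8/3}(\rho-2)$ in the slit domain $\U \setminus \gamma^R$, the restriction exponent for $\gamma^L$ is $(3(\rho-2)^2 + 16(\rho-2) + 20)/32 = \rho(3\rho+4)/32$, and Theorem~\ref{thm::sle8over3rho_rightrestriction} computes its conditional avoidance probability as a boundary derivative at the starting prime end of $\gamma^L$, after uniformizing $\U \setminus \gamma^R$ onto $\HH$.

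For the unconditional computation, let $T = \inf\{t : \gamma^R(t) \in A\}$ and use the maps $g_t, \tilde{g}_t, h_t$ and the angles $\theta_t, \vartheta_t$ from Figure~\ref{fig::sle8over3rho_radial_exchange}. Proposition~\ref{prop::sle8over3rho_radial_mart} supplies the local martingale
\[
M_t = |h_t'(0)|^{\xi(\beta)} \, |h_t'(e^{iW_t})|^{5/8} \, |h_t'(e^{iO_t})|^{\rho(3\rho+4)/32} \, \left(\frac{\sin\vartheta_t}{\sin\theta_t}\right)^{3\rho/8}.
\]
Since $\rho > 0$ when $\beta > 5/8$, monotonicity of $h_t'$ on the boundary arc between $e^{iW_t}$ and $e^{iO_t}$ together with the universal bound $|h_t'(0)| \, |h_t'(e^{iW_t})|^2 \le 1$ (applied to $h_t$ as in the Remark following Theorem~\ref{thm::radial_restriction}) yield $M_t \le 1$, so $(M_{t \wedge T})$ is a bounded martingale. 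At $t = 0$ one has $h_0 = \Phi_A$ and, in the SLE$^L$ starting configuration, $\vartheta_0/\theta_0 \to \Phi_A'(1)$ as $O_0 \to W_0$; using the algebraic identity $\beta = 5/8 + \rho(3\rho+4)/32 + 3\rho/8$, this gives $M_0 = |\Phi_A'(0)|^{\xi(\beta)} \Phi_A'(1)^\beta$.

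The decisive step is analysing $\lim_{t \to T} M_t$ on each of the two events. On $\{T < \infty\}$ the tip approaches $A$, so $|h_t'(e^{iW_t})| \to 0$ and $M_t \to 0$. On $\{T = \infty\}$ the radial curve $\gamma^R$ accumulates at the origin; as $t \to \infty$, $|h_t'(e^{iW_t})| \to 1$ and $\sin\vartheta_t/\sin\theta_t \to 1$, while the identity $h_t'(0) g_t'(0) = \tilde{g}_t'(0) \Phi_A'(0)$ combined with the compatible divergence of the two capacities forces $|h_t'(0)|^{\xi(\beta)} \to |\Phi_A'(0)|^{\xi(\beta)}$. The remaining factor $|h_t'(e^{iO_t})|^{\rho(3\rho+4)/32}$ becomes the boundary conformal derivative at the starting prime end of the chordal problem for $\gamma^L$ in $\U \setminus \gamma^R$, which by Theorem~\ref{thm::sle8over3rho_rightrestriction} equals $\PP[\gamma^L \cap A = \emptyset \mid \gamma^R]$. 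Optional stopping then yields $\PP[K \cap A = \emptyset] = \E[M_T] = M_0 = |\Phi_A'(0)|^{\xi(\beta)} \Phi_A'(1)^{\beta}$. Finally, $0 \in \partial K$ almost surely is immediate: $\gamma^R \subset K$ terminates at the origin, and $0$ cannot be an interior point of $K$ since $\U \setminus K$ (the complement of the domain bounded by $\gamma^L$ and $\gamma^R$) is connected.

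The hardest part will be the $\{T = \infty\}$ limit: decoupling the three degenerating factors $|h_t'(0)|^{\xi(\beta)}$, $|h_t'(e^{iO_t})|^{\rho(3\rho+4)/32}$, and $(\sin\vartheta_t/\sin\theta_t)^{3\rho/8}$ as $\gamma^R$ collapses onto the origin, and rigorously justifying that the $|h_t'(e^{iO_t})|$ factor produces exactly the conditional chordal restriction probability for $\gamma^L$ after uniformization of $\U \setminus \gamma^R$ onto $\HH$. This amounts to tracking the conformal uniformizations of the shrinking slit domains through a limit and is geometrically subtler than the corresponding chordal step; uniform integrability for optional stopping, by contrast, is automatic from $M_t \le 1$.
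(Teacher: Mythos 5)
Your overall plan — optional stopping with the local martingale $M_t$ from Proposition~\ref{prop::sle8over3rho_radial_mart}, evaluating $M_0$, and identifying the limits of the four factors on $\{T<\infty\}$ and $\{T=\infty\}$ — is exactly the paper's argument, and most of your intermediate steps (the role of $\gamma^L$ with exponent $(3(\rho-2)^2+16(\rho-2)+20)/32 = \rho(3\rho+4)/32$, the identification $M_0 = |\Phi_A'(0)|^{\xi(\beta)}\Phi_A'(1)^\beta$, the death of $M$ on $\{T<\infty\}$) match.

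However, there is a genuine error in your analysis of the $\{T=\infty\}$ limit. You assert that $|h_t'(0)|^{\xi(\beta)} \to |\Phi_A'(0)|^{\xi(\beta)}$. The correct limit is $|h_t'(0)| \to 1$. Recall $h_t'(0) = e^{a(g_t(A))}$, where $a(g_t(A))$ is the capacity of $g_t(A)$ in $\U$ seen from $0$; equivalently $h_t'(0) = e^{a(t)-t}$ with $a(t) = a(K_t\cup A)$. As $t\to\infty$ on $\{T=\infty\}$, the curve $\gamma^R$ accumulates at $0$ while avoiding $A$, so the harmonic measure of $A$ from $0$ in $\U\setminus K_t$ tends to $0$; hence $g_t(A)$ shrinks to a boundary point, $a(g_t(A))\to 0$, and $h_t'(0)\to 1$. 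Your use of the identity $h_t'(0)g_t'(0) = \tilde g_t'(0)\Phi_A'(0)$ correctly gives $h_t'(0) = \Phi_A'(0)\,\tilde g_t'(0)/g_t'(0) = e^{a(A)}e^{a(t)-a(A)-t} = e^{a(t)-t}$, but the ``compatible divergence'' you invoke should be $a(t)-t\to 0$ (the excess capacity of $A$ over $K_t$ vanishes), not $a(t)-t\to a(A)$.

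This is not a cosmetic discrepancy: it makes your final line self-contradictory. With your claimed limit, $M_T = \mathbf{1}_{\{T=\infty\}}|\Phi_A'(0)|^{\xi(\beta)}\PP[\gamma^L\cap A=\emptyset\mid\gamma^R]$, so $\E[M_T] = |\Phi_A'(0)|^{\xi(\beta)}\PP[K\cap A=\emptyset]$. Equating with $M_0 = |\Phi_A'(0)|^{\xi(\beta)}\Phi_A'(1)^\beta$ would give $\PP[K\cap A=\emptyset] = \Phi_A'(1)^\beta$, which is not the target $\QQ(\xi(\beta),\beta)$ probability and not what you write in your conclusion. With the corrected limit $|h_t'(0)|\to 1$, the factor $|\Phi_A'(0)|^{\xi(\beta)}$ does not reappear at $t=\infty$; it lives only in $M_0$, and optional stopping cleanly gives $\PP[K\cap A=\emptyset] = \E[M_T] = M_0 = |\Phi_A'(0)|^{\xi(\beta)}\Phi_A'(1)^\beta$ as required.
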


\begin{figure}[ht!]
\begin{center}
\includegraphics[width=0.23\textwidth]{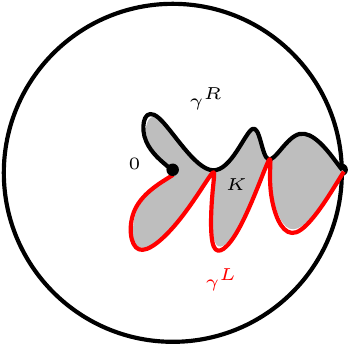}
\end{center}
\caption{\label{fig::construction_maximal_sample} The curve $\gamma^R$ is a radial SLE$^L_{8/3}(\rho)$ in $\U$ from 1 to 0. Conditioned on $\gamma^R$, the curve $\gamma^L$ is a chordal SLE$^R_{8/3}(\rho-2)$ in $\overline{\U\setminus\gamma^R}$ from $1^-$ to 0. The set $K$ is the closure of the union of domains between the two curves.}
\end{figure}
\begin{figure}[ht!]
\begin{center}
\includegraphics[width=0.47\textwidth]{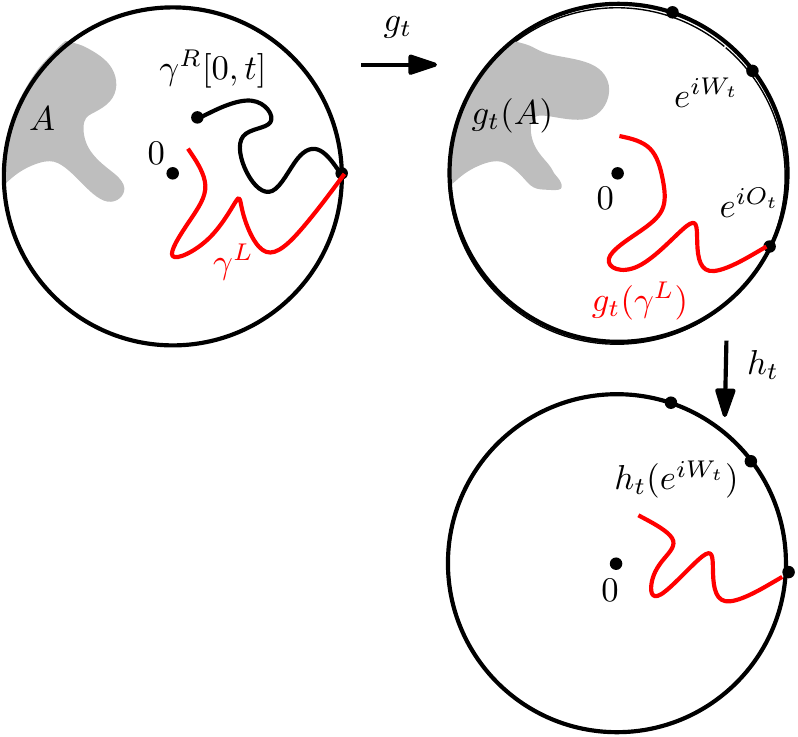}
\end{center}
\caption{\label{fig::radial_restriction_construction} The map $g_t$ is the conformal map from $\U\setminus\gamma^R[0,t]$ onto $\U$ normalized at the origin. The map $h_t$ is the conformal map from $\U\setminus g_t(A)$ onto $\U$ normalized at the origin.}
\end{figure}
\begin{proof}
Suppose $(g_t,t\ge 0)$ is the Loewner chain for $\gamma^R$. For any $A\in\LA_r$, define $T=\inf\{t: \gamma^R(t)\in A\}$. For $t<T$, let $h_t$ be the conformal map from $\U\setminus g_t(A)$ onto $U$ normalized at the origin.
From Proposition \ref{prop::sle8over3rho_radial_mart}, we know that
\[M_t=|h_t'(0)|^{\alpha}\times|h_t'(e^{iW_t})|^{5/8}\times|h_t'(e^{iO_t})|^{\rho(3\rho+4)/32}\times\left(\frac{\sin\vartheta_t}{\sin\theta_t}\right)^{3\rho/8}\]
is a local martingale, and
\[M_0=|\Phi_A'(0)|^{\xi(\beta)}\Phi_A'(1)^{\beta}.\]
See Figure \ref{fig::radial_restriction_construction}. If $T<\infty$,
\[\lim_{t\to T}h_t'(e^{iW_t})=0, \quad \text{and }\lim_{t\to T}M_t=0.\]
If $T=\infty$, as $t\to\infty$
\[|h_t'(0)|\to 1,\quad |h_t'(e^{iW_t})|\to 1,\quad \frac{\sin\theta_t}{\sin\vartheta_t}\to 1,\]
\[
|h_t'(e^{iO_t})|^{\rho(3\rho+4)/32}\to \PP[\gamma^L\cap A=\emptyset\cond \gamma^R].\]
Thus,
\[\PP[K\cap A=\emptyset]=\E[1_{T=\infty}\E[1_{K\cap A=\emptyset}]\cond \gamma^R]=\E[M_T]=M_0.\]

\end{proof}

\subsection{Whole-plane intersection exponents $\xi(\beta_1,...,\beta_p)$}\label{subsec::restriction_exponents_radial}
Recall that $\xi(\beta_1,...,\beta_p)$ and $\tilde{\xi}(\beta_1,...,\beta_p)$ are defined in Equations (\ref{eqn::wholeplane_exponent}, \ref{eqn::halfplane_exponent}) and $\hat{\xi}(\beta_1,...,\beta_p)$ is defined in Equation (\ref{eqn::hatxi}). 
For $x\in\R$ and a subset $K\subset\overline{\U}$, denote
\[e^{ix}K=\{e^{ix}z: z\in K\}.\]
For $r\in (0,1)$, denote the annulus by
\[\A_r=\U\setminus\overline{\U}(0,r).\]
\begin{proposition} \label{prop::disjoint_exponent_radial}
Fix $\beta_1,...,\beta_p\ge 5/8$. Suppose $K_1,...,K_p$ are $p$ independent radial restriction samples whose laws are $\QQ(\xi(\beta_1),\beta_1)$,..., $\QQ(\xi(\beta_p),\beta_p)$ respectively. Let $r>0$, $\eps>0$ small. Set $x_j=j\eps$ for $j=1,...,p$. Then, as $\eps,r\to 0$, we have
\begin{eqnarray*}
\lefteqn{\PP[(e^{ix_{j_1}}K_{j_1}\cap\A_r)\cap (e^{ix_{j_2}}K_{j_2}\cap\A_r)=\emptyset, 1\le j_1<j_2\le p]}\\
&\approx& \eps^{\hat{\xi}(\beta_1,...,\beta_p)}r^{\xi(\beta_1,...,\beta_p)-\xi(\beta_1)-\cdots-\xi(\beta_p)}.
\end{eqnarray*}
\end{proposition}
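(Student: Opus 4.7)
The plan is to follow the blueprint of Proposition~\ref{prop::disjoint_exponent} and Lemma~\ref{lem::estimate_sle8over3_restriction} in the radial setting, now tracking the two natural scales: $\eps$ (angular separation near the common boundary point $1$) and $r$ (capacity distance to the common interior point $0$). The $\eps$-exponent will come from ``half-plane'' behaviour near $1$ (giving $\hat\xi$), and the $r$-exponent from ``whole-plane'' behaviour near $0$ (giving $\xi(\beta_1,\dots,\beta_p)-\sum_j\xi(\beta_j)$). As a first step I would reduce to $p=2$ by induction on $p$: granted the $(p-1)$-case, Bayes' rule together with the radial restriction property implies that the conditioned fill-in of $K_1,\dots,K_{p-1}$ behaves, to leading order in $\eps$ and $r$, like a single radial restriction sample of parameters $(\xi(\beta_1,\dots,\beta_{p-1}),\tilde\xi(\beta_1,\dots,\beta_{p-1}))$ (the radial analogue of Theorem~\ref{thm::halfplane_exponent}), against which $K_p$ is pitted, and the cascade identities for $\xi,\tilde\xi$ close the induction.

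For the $p=2$ step, set $\rho_j=\rho(\beta_j)$ and let $\gamma$ be the right boundary of $e^{2i\eps}K_2$, which by the construction of Section~\ref{sec::radial_restriction} is (after rotation) a radial $\text{SLE}^L_{8/3}(\rho_2)$ from $e^{2i\eps}$ to $0$; write $(g_t,W_t,O_t)$ for its Loewner data. Let $T_r=\inf\{t:|\gamma(t)|\le r\}$; by Koebe, $T_r=\log(1/r)+O(1)$. Up to a symmetric contribution from the left boundary of $e^{i\eps}K_1$ handled identically, the non-intersection event in $\A_r$ reduces to $\{\gamma[0,T_r]\cap e^{i\eps}K_1=\emptyset\}$. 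Conditioning on $\gamma[0,T_r]$ and applying the radial restriction property to $K_1$ gives
\[
\PP\bigl[e^{i\eps}K_1\cap\gamma[0,T_r]=\emptyset\,\big|\,\gamma\bigr]
=|\Phi_{e^{-i\eps}\gamma[0,T_r]}'(0)|^{\xi(\beta_1)}\,\Phi_{e^{-i\eps}\gamma[0,T_r]}'(1)^{\beta_1}.
\]
To integrate out $\gamma$, I would mirror the proof of Proposition~\ref{prop::sle8over3rho_radial_mart} and build a bounded local martingale
\[
M_t=|h_t'(0)|^{\xi(\beta_1)}\,|h_t'(e^{iW_t})|^{5/8}\,|h_t'(e^{iO_t})|^{c}\,\bigl(\sin\vartheta_t/\sin\theta_t\bigr)^{d},
\]
where $h_t$ is the exchange map of Figure~\ref{fig::sle8over3rho_radial_exchange} associated to the obstacle $A_t=e^{-i\eps}\gamma[0,t]$, and $(c,d)$ are determined by the drift-cancellation system coming from It\^o's formula applied to the six processes listed in the proof of Proposition~\ref{prop::sle8over3rho_radial_mart}, with the coefficient $\xi(\beta_1)$ (instead of the SLE-only exponent $\alpha$) on the $|h_t'(0)|$ factor; the system has a unique solution $(c,d)$ depending on $(\beta_1,\rho_2)$.

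Optional stopping then yields $\PP[\gamma[0,T_r]\cap e^{i\eps}K_1=\emptyset]=\E[M_{T_r}]$, and the $\eps,r\to 0$ asymptotics come from reading off $M_0$ and the behaviour of $M$ at $T_r$: initially $\sin\vartheta_0/\sin\theta_0$ is of order~$\eps$, producing an $\eps^d$ factor; at $t=T_r$ the capacity factor $|h_{T_r}'(0)|$ shrinks like a power of $r$ while $|h_{T_r}'(e^{iW_{T_r}})|^{5/8}\to 1$. Matching the resulting powers with the identities $\beta_2=5/8+\rho_2(3\rho_2+4)/32+3\rho_2/8$ and $\xi(\beta_2)=5/48+3\rho_2(\rho_2+4)/64$ and with the algebra of \eqref{eqn::wholeplane_exponent}--\eqref{eqn::halfplane_exponent} produces $\eps^{\hat\xi(\beta_1,\beta_2)}$ for the boundary scale and $r^{\xi(\beta_1,\beta_2)-\xi(\beta_1)-\xi(\beta_2)}$ for the interior scale. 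The main obstacle is, as in \cite[Lemma~8.10]{LawlerSchrammWernerConformalRestriction}, the It\^o computation establishing that $M$ is a (bounded) local martingale with the correct $(c,d)$, made heavier here by the angular term $(\sin\vartheta_t/\sin\theta_t)^d$; a secondary difficulty is that $0\in K_1\cap K_2$ forces the non-intersection event in $\A_r$ to be analysed via capacity at scale $r$ rather than by direct conditioning near the origin, which is what makes the $r$-power visible in the first place.
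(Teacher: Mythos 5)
Your overall strategy matches the paper's: reduce to $p=2$ (the paper explicitly says induction handles the general case), replace one restriction sample by its SLE$^L_{8/3}(\rho)$ boundary, condition on that boundary and invoke the radial restriction property of the other sample, then compute via a local martingale and optional stopping, reading off the $\eps$-power near $1$ and the $r$-power near $0$. This is exactly the route the paper takes through Lemma~\ref{lem::estimate_sle8over3_restriction_radial}.

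However, the martingale you write down is the wrong object. You propose
\[
M_t=|h_t'(0)|^{\xi(\beta_1)}\,|h_t'(e^{iW_t})|^{5/8}\,|h_t'(e^{iO_t})|^{c}\,\bigl(\sin\vartheta_t/\sin\theta_t\bigr)^{d},
\]
modelled on Proposition~\ref{prop::sle8over3rho_radial_mart}, with $h_t$ ``the exchange map associated to the obstacle $A_t=e^{-i\eps}\gamma[0,t]$.'' That exchange-map framework (Figure~\ref{fig::sle8over3rho_radial_exchange}, Equation~(\ref{eqn::sle8over3_radial_exchange})) is built around a \emph{fixed} obstacle $A$ disjoint from the growing curve; the commutation $h_t\circ g_t=\tilde g_t\circ\Phi_A$ and the quantities $\vartheta_t,\theta_t$ only make sense in that setup. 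Here there is no fixed $A$: the ``obstacle'' you describe is the rotated image of the curve itself, is time-varying, need not be disjoint from $\gamma[0,t]$, and is not in the domain of $g_t$. So $h_t$ is not defined, and $|h_t'(0)|$, $|h_t'(e^{iW_t})|$, $\vartheta_t$, $\theta_t$ have no meaning. The drift-cancellation system you invoke cannot be set up.

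The correct martingale for this lemma is built directly from the Loewner data of $\gamma$ evaluated at the marked point $e^{i\eps}$. Conditioning gives
$\PP[\gamma[0,t]\cap e^{i\eps}K_1=\emptyset\,|\,\gamma[0,t]]=e^{t\alpha_1}g_t'(e^{i\eps})^{\beta_1}$,
and the paper's martingale (for the SLE$^L_{8/3}(\rho)$ with marked point $e^{i\eps}$) is
\[
M_t=e^{t(\bar q-q)}\,g_t'(e^{i\eps})^{\beta}\,|g_t(e^{i\eps})-e^{iW_t}|^{3\bar\rho/8}\,|g_t(e^{i\eps})-e^{iO_t}|^{3\rho\bar\rho/16},
\]
with $\bar\rho=\tfrac{2}{3}(\sqrt{24\beta+1}-1)$, $q=\tfrac{3}{64}\rho(\rho+4)$, $\bar q=\tfrac{3}{64}(\bar\rho+\rho)(\bar\rho+\rho+4)$. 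At $t=0$ the factors $|e^{i\eps}-e^{iW_0}|$ and $|e^{i\eps}-e^{iO_0}|$ supply the $\eps^{3\bar\rho/8+3\rho\bar\rho/16}=\eps^{\hat\xi(\beta_1,\beta_2)}$ scaling, and the weight $e^{t(\alpha-\bar q+q)}$ supplies $r^{\bar q-q-\alpha}$. This is structurally a one-sided argument (no exchange map, no $\tilde g_t$), which is why the ``$\sin\vartheta_t/\sin\theta_t$'' term has no analogue here. Your diagnosis of which scales produce which exponent is right, and your use of $T_r$ in place of a fixed capacity time is harmless by Koebe, but the martingale itself needs to be replaced as above for the It\^o computation to go through.
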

In the following theorem, we will consider the law of $K_1,...,K_p$ conditioned on ``non-intersection". Since the event of ``non-intersection" has zero probability, we need to explain the precise meaning: the conditioned law would be obtained through a limiting procedure: first consider the law of $K_1,...,K_p$ conditioned on
\[[(e^{ix_{j_1}}K_{j_1}\cap\A_r)\cap (e^{ix_{j_2}}K_{j_2}\cap\A_r)=\emptyset, 1\le j_1<j_2\le p]\]
and then let $r\to 0$ and $\eps\to 0$.
\begin{theorem}\label{thm::wholeplane_exponent}
Fix $\beta_1,...,\beta_p\ge 5/8$. Suppose $K_1,...,K_p$ are $p$ independent radial restriction samples whose laws are $\QQ(\xi(\beta_1),\beta_1)$,..., $\QQ(\xi(\beta_p),\beta_p)$ respectively. Then the ``fill-in" of the union of these $p$ sets conditioned on ``non-intersection" has the same law as radial restriction sample with law \[\QQ(\xi(\beta_1,...,\beta_p),\tilde{\xi}(\beta_1,...,\beta_p)).\]
\end{theorem}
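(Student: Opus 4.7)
The plan is to follow the chordal argument of Theorem~\ref{thm::halfplane_exponent} almost verbatim, compensating for the two-parameter nature of radial restriction via one factor at $0$ and one at $1$. First I would reduce to $p=2$ by induction on $p$ using the cascade relations $\tilde\xi(\tilde\xi(\beta_1,\ldots,\beta_p),\beta_{p+1})=\tilde\xi(\beta_1,\ldots,\beta_{p+1})$ and $\xi(\tilde\xi(\beta_1,\ldots,\beta_p),\beta_{p+1})=\xi(\beta_1,\ldots,\beta_{p+1})$: the second identity is precisely what makes the law $\QQ(\xi(\beta_1,\ldots,\beta_p),\tilde\xi(\beta_1,\ldots,\beta_p))$ of the conditioned fill-in of the first $p$ samples again of the form $\QQ(\xi(\beta_*),\beta_*)$, on which the pair step can be applied to it together with the independent remaining sample $K_{p+1}$.

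For $p=2$, fix $A\in\LA_r$ and write, with $\mathcal{N}_{\eps,r}$ denoting the non-intersection event of Proposition~\ref{prop::disjoint_exponent_radial},
\[
\PP[K_1\cap A=K_2\cap A=\emptyset\mid\mathcal{N}_{\eps,r}]
=\PP[K_1\cap A=\emptyset]\,\PP[K_2\cap A=\emptyset]\cdot\frac{\PP[\mathcal{N}_{\eps,r}\mid K_1\cap A=K_2\cap A=\emptyset]}{\PP[\mathcal{N}_{\eps,r}]}.
\]
The radial restriction property for each $K_j$ makes the product of marginals equal to $|\Phi_A'(0)|^{\xi(\beta_1)+\xi(\beta_2)}\Phi_A'(1)^{\beta_1+\beta_2}$, and conditioned on $\{K_j\cap A=\emptyset\}$ the pair $(K_1,K_2)$ has the distribution of $(\Phi_A^{-1}(\tilde K_1),\Phi_A^{-1}(\tilde K_2))$ with $\tilde K_j\sim\QQ(\xi(\beta_j),\beta_j)$ independent. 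The crucial step is to push the conditioned event $\mathcal{N}_{\eps,r}$ through $\Phi_A$: using the local expansions $\Phi_A(e^{ix})=e^{i\Phi_A'(1)x}(1+O(x))$ near $1$ and $\Phi_A(\U(0,r))=\U(0,r|\Phi_A'(0)|)(1+o(1))$ near $0$, the event corresponds, modulo lower-order corrections as $\eps,r\to 0$, to the non-intersection event for $(\tilde K_1,\tilde K_2)$ with deformed angular spacing $\Phi_A'(1)\eps$ and deformed inner radius $r|\Phi_A'(0)|$.

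Applying Proposition~\ref{prop::disjoint_exponent_radial} to both numerator and denominator, the ratio becomes
\[
\frac{(\Phi_A'(1)\eps)^{\hat\xi(\beta_1,\beta_2)}(r|\Phi_A'(0)|)^{\xi(\beta_1,\beta_2)-\xi(\beta_1)-\xi(\beta_2)}}{\eps^{\hat\xi(\beta_1,\beta_2)}r^{\xi(\beta_1,\beta_2)-\xi(\beta_1)-\xi(\beta_2)}}
=\Phi_A'(1)^{\hat\xi(\beta_1,\beta_2)}|\Phi_A'(0)|^{\xi(\beta_1,\beta_2)-\xi(\beta_1)-\xi(\beta_2)}.
\]
Multiplying by the marginal product and using $\hat\xi(\beta_1,\beta_2)+\beta_1+\beta_2=\tilde\xi(\beta_1,\beta_2)$ yields, in the limit $\eps,r\to 0$, the expression $|\Phi_A'(0)|^{\xi(\beta_1,\beta_2)}\Phi_A'(1)^{\tilde\xi(\beta_1,\beta_2)}$, which by the characterization in Theorem~\ref{thm::radial_restriction} identifies the conditional law of the fill-in as $\QQ(\xi(\beta_1,\beta_2),\tilde\xi(\beta_1,\beta_2))$.

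The main obstacle is making the above asymptotic equivalence of the conditioned non-intersection event rigorous. The map $z\mapsto\Phi_A(e^{ix}\Phi_A^{-1}(z))$ is not an exact rotation of $\U$ but a small conformal self-deformation of $\U$ with different linearizations at its fixed points (rotation by $e^{i\Phi_A'(1)x}$ near $1$ and by $e^{ix}$ near $0$), and one must check that the resulting $o(1)$ distortion of $\tilde K_j$ does not affect the leading exponents delivered by Proposition~\ref{prop::disjoint_exponent_radial}. This is the exact analogue of the analytic step implicit in the chordal Theorem~\ref{thm::halfplane_exponent}, where the passage from $\eps$ to $\Phi_A(\eps)\sim\Phi_A'(0)\eps$ is similarly justified only heuristically; a careful treatment requires a continuity/stability statement for the non-intersection asymptotic under small conformal perturbations near the two fixed points.
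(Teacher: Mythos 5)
Your proposal follows essentially the same route as the paper: reduce to $p=2$ (the paper also invokes induction, which you correctly justify via the cascade relations), apply Bayes to split off the marginal factors $|\Phi_A'(0)|^{\xi(\beta_1)+\xi(\beta_2)}\Phi_A'(1)^{\beta_1+\beta_2}$ using the radial restriction property, push the non-intersection event through $\Phi_A$, and then apply Proposition~\ref{prop::disjoint_exponent_radial} to extract the extra factors $\Phi_A'(1)^{\hat\xi(\beta_1,\beta_2)}|\Phi_A'(0)|^{\xi(\beta_1,\beta_2)-\xi(\beta_1)-\xi(\beta_2)}$. The technical gap you flag — that $\Phi_A$ conjugates the rotation-and-annulus non-intersection event only approximately, with different linearizations at the two fixed points — is indeed glossed over in the paper's proof as well, so you have accurately identified the same heuristic step and made it more explicit.
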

We only need to show the results for $p=2$ and other $p$ can be proved by induction. When $p=2$, Proposition \ref{prop::disjoint_exponent_radial} is a direct consequence of the following lemma.
\begin{lemma}\label{lem::estimate_sle8over3_restriction_radial}
Let $K$ be a radial restriction sample with exponents $(\alpha,\beta)$. Let $r>0$, $\eps>0$ be small. Suppose $\gamma$ is an independent radial SLE$^L_{8/3}(\rho)$ process. Then we have
\[\PP[\gamma[0,t]\cap (e^{i\eps}K)=\emptyset]\approx\eps^{\frac{3}{16}\bar{\rho}(\rho+2)}r^{\bar{q}-q-\alpha}\quad\text{as }\eps,r\to 0\]
where $r=e^{-t}$, and
\[\bar{\rho}=\frac{2}{3}(\sqrt{24\beta+1}-1),\]
\[q=\frac{3}{64}\rho(\rho+4), \quad \bar{q}=\frac{3}{64}(\bar{\rho}+\rho)(\bar{\rho}+\rho+4).\]
Note that, if $\beta_1=\beta$, $\beta_2=(3\rho^2+16\rho+20)/32,\alpha=\xi(\beta_1)$, then
\[\frac{3}{16}\bar{\rho}(\rho+2)=\hat{\xi}(\beta_1,\beta_2),\quad \bar{q}-q-\alpha=\xi(\beta_1,\beta_2)-\xi(\beta_1)-\xi(\beta_2).\]
\end{lemma}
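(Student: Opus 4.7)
The first step is to reduce to a one-curve expectation via the radial restriction property of $K$. Given $\gamma[0,t]$, let $g_t:\U\setminus\gamma[0,t]\to\U$ be the radial Loewner map, so $g_t(0)=0$ and $g_t'(0)=e^t=1/r$. The event $\{\gamma[0,t]\cap e^{i\eps}K=\emptyset\}$ equals $\{K\cap e^{-i\eps}\gamma[0,t]=\emptyset\}$, and the unique conformal map $\Phi_A:\U\setminus A\to\U$ fixing $0$ and $1$ for $A=e^{-i\eps}\gamma[0,t]$ is $\Phi_A(z)=e^{-i\tau}g_t(e^{i\eps}z)$ with $\tau=\arg g_t(e^{i\eps})$; a direct computation gives $|\Phi_A'(0)|=g_t'(0)=1/r$ and $\Phi_A'(1)=|g_t'(e^{i\eps})|$. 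The defining identity (\ref{eqn::radial_restriction_characterization}) of $\QQ(\alpha,\beta)$ then yields
\[
\PP[\gamma[0,t]\cap e^{i\eps}K=\emptyset\mid\gamma]=r^{-\alpha}\,|g_t'(e^{i\eps})|^{\beta},
\]
reducing the lemma to the asymptotic $\E[|g_t'(e^{i\eps})|^\beta]\approx\eps^{3\bar\rho(\rho+2)/16}\,r^{\bar q-q}$ as $\eps,r\to 0$.

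Next, in the spirit of Lemma \ref{lem::estimate_sle8over3_restriction} and Proposition \ref{prop::sle8over3rho_radial_mart}, I construct a local martingale that isolates $|g_t'(e^{i\eps})|^\beta$. Let $V_t$ be defined by $g_t(e^{i\eps})=e^{iV_t}$, and set $\theta_t=(W_t-O_t)/2$, $\eta_t=(V_t-W_t)/2$, $\mu_t=\theta_t+\eta_t=(V_t-O_t)/2$. Standard radial-Loewner calculus gives $dV_t=\cot\eta_t\,dt$, $d\log|g_t'(e^{i\eps})|=-\tfrac{1}{2}\csc^2\eta_t\,dt$, and $d\log g_t'(0)=dt$, while the SDE for radial SLE$^L_{8/3}(\rho)$ gives $d\theta_t=\tfrac{\sqrt{8/3}}{2}dB_t+\tfrac{\rho+2}{4}\cot\theta_t\,dt$ and $d\eta_t=-\tfrac{\sqrt{8/3}}{2}dB_t+\tfrac{1}{2}\cot\eta_t\,dt-\tfrac{\rho}{4}\cot\theta_t\,dt$. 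Viewing the reweighted measure as radial SLE$_{8/3}(\rho,\bar\rho)$ with an additional $\bar\rho$-force point attached at $e^{iV_t}$, the natural partition-function candidate is
\[
M_t:=|g_t'(e^{i\eps})|^\beta\,(g_t'(0))^{\bar q-q}\,|\sin\eta_t|^{3\bar\rho/8}\,|\sin\mu_t|^{3\bar\rho\rho/16}.
\]
A routine but tedious Ito computation in the style of Propositions \ref{prop::chordalsle8over3_localmart} and \ref{prop::sle8over3rho_radial_mart} verifies that $dM_t/M_t$ has no drift; the cancellations rely precisely on the definitional identities $\beta=\bar\rho(3\bar\rho+4)/32$ (equivalently $\bar\rho=(2/3)(\sqrt{24\beta+1}-1)$) and $\bar q-q=(3/64)\bar\rho(\bar\rho+2\rho+4)=(3/64)[(\bar\rho+\rho)(\bar\rho+\rho+4)-\rho(\rho+4)]$.

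At $t=0$ one has $V_0=\eps$, $W_0=0$, $O_0=0^-$ and $g_0'\equiv1$, so $\theta_0=0^+$, $\eta_0=\mu_0=\eps/2$, and
\[
M_0=(\sin(\eps/2))^{3\bar\rho/8+3\bar\rho\rho/16}\approx\eps^{3\bar\rho(\rho+2)/16}.
\]
The factor $(g_t'(0))^{\bar q-q}=e^{t(\bar q-q)}=r^{-(\bar q-q)}$ is deterministic, and the trigonometric factors $|\sin\eta_t|^{3\bar\rho/8}|\sin\mu_t|^{3\bar\rho\rho/16}$ are bounded by $1$ and typically of order one, so $M_t$ is essentially a bounded positive martingale and optional stopping gives $\E[M_t]\approx M_0$ (up to a relative error $\eps^{err}$ with $err\to 0$, exactly in the sense of the proof of Lemma \ref{lem::estimate_sle8over3_restriction}). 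Pulling the deterministic factor out, $\E[|g_t'(e^{i\eps})|^\beta]\approx r^{\bar q-q}M_0$, and combining with the first paragraph delivers $\PP[\gamma[0,t]\cap e^{i\eps}K=\emptyset]\approx\eps^{3\bar\rho(\rho+2)/16}\,r^{\bar q-q-\alpha}$.

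The main obstacle is the same as in the chordal setting: making $\E[M_t]\approx M_0$ quantitative requires a localization argument to control the degeneracies of the $\sin$-factors (the initial degeneracy $\theta_0=0^+$ coming from $W_0=O_0$ for SLE$^L_{8/3}(\rho)$, and the possibility that $V_t$ approaches a swallowed region). Stopping at the first exit of $(\theta_t,\eta_t)$ from a compact subset of the natural range and then relaxing the stopping is the standard remedy. A genuinely new feature relative to Lemma \ref{lem::estimate_sle8over3_restriction} is the joint $\eps,r\to 0$ asymptotic; the $r$-dependence is captured by the explicit deterministic factor $(g_t'(0))^{\bar q-q}$ in $M_t$, and this is exactly the ingredient that produces the whole-plane exponent $\xi(\beta_1,\beta_2)-\xi(\beta_1)-\xi(\beta_2)$ in the intended application to Proposition \ref{prop::disjoint_exponent_radial} and Theorem \ref{thm::wholeplane_exponent}.
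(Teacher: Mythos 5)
Your proposal is correct and follows essentially the same route as the paper's proof: reduce to $\E[e^{t\alpha}|g_t'(e^{i\eps})|^{\beta}]$ via the defining identity of $\QQ(\alpha,\beta)$, then introduce the local martingale $M_t=e^{t(\bar q-q)}|g_t'(e^{i\eps})|^{\beta}|g_t(e^{i\eps})-e^{iW_t}|^{3\bar\rho/8}|g_t(e^{i\eps})-e^{iO_t}|^{3\bar\rho\rho/16}$ and read off $M_0$. Your $M_t$ agrees with the paper's up to the harmless constant $2^{3\bar\rho/8+3\bar\rho\rho/16}$ coming from writing $|e^{ia}-e^{ib}|=2|\sin((a-b)/2)|$, and your remark that the $\approx$ requires a localization argument to be made quantitative matches the paper's implicit deferral to the analogous Lemma~\ref{lem::estimate_sle8over3_restriction}.
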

\begin{proof}
Let $(g_t,t\ge 0)$ be the Loewner chain for $\gamma$ and $(O_t,W_t)$ be the solution to the SDE. Precisely,
\[\partial_t g_t(z)=g_t(z)\frac{e^{iW_t}+g_t(z)}{e^{iW_t}-g_t(z)},\quad g_0(z)=z;\]
\[dW_t=\sqrt{\kappa}dB_t+\frac{\rho}{2}\cot(\frac{W_t-O_t}{2})dt,\quad dO_t=-\cot(\frac{W_t-O_t}{2})dt,\quad W_0=0, O_0=2\pi-.\]
Given $\gamma[0,t]$, since $K$ satisfies radial restriction property, we have that
\[\PP[\gamma[0,t]\cap(e^{i\eps}K)=\emptyset\cond \gamma[0,t]]=g_t'(e^{i\eps})^{\beta}e^{t\alpha}.\]
Define
\[M_t=e^{t(\bar{q}-q)}g_t'(e^{i\eps})^{\beta}|g_t(e^{i\eps})-e^{iW_t}|^{3\bar{\rho}/8}|g_t(e^{i\eps})-e^{iO_t}|^{3\rho\bar{\rho}/16}.\]
One can check that $M$ is a local martingale. Thus we have
\begin{eqnarray*}
\lefteqn{\PP[\gamma[0,t]\cap(e^{i\eps}K)=\emptyset]}\\
&=&\E[g_t'(e^{i\eps})^{\beta}e^{t\alpha}]\\
&=&e^{t(\alpha-\bar{q}+q)}\E[e^{t(\bar{q}-q)}g_t'(e^{i\eps})^{\beta}]\\
&\approx& r^{\bar{q}-q-\alpha}\E[M_t]=r^{\bar{q}-q-\alpha}M_0.\end{eqnarray*}
\end{proof}
\begin{proof}[Proof of Theorem \ref{thm::wholeplane_exponent}] Assume $p=2$. For any $A\in\LA_r$, we need to estimate the following probability
\[\PP[K_1\cap A=\emptyset, K_2\cap A=\emptyset\cond (e^{i\eps}K_1\cap\A_r)\cap (e^{i2\eps}K_2\cap\A_r)=\emptyset].\]
The idea is similar to the proof of Theorem \ref{thm::halfplane_exponent}. 
Since $K_i$ satisfies radial restriction property, conditioned on $[K_i\cap A=\emptyset]$, the conditional law of $\Phi_A(K_i)$ has the same law as $K_i$ for $i=1,2$. Thus
\begin{eqnarray*}
\lefteqn{\lim_{\eps\to 0,r\to 0}\PP[K_1\cap A=\emptyset, K_2\cap A=\emptyset\cond (e^{i\eps}K_1\cap\A_r)\cap (e^{i2\eps}K_2\cap\A_r)=\emptyset]}\\
&=&\lim_{\eps\to 0,r\to 0}\frac{\PP[K_1\cap A=\emptyset, K_2\cap A=\emptyset, (e^{i\eps}K_1\cap\A_r)\cap (e^{i2\eps}K_2\cap\A_r)=\emptyset]}{\PP[(e^{i\eps}K_1\cap\A_r)\cap (e^{i2\eps}K_2\cap\A_r)=\emptyset]}\\
&=&\lim_{\eps\to 0,r\to 0}|\Phi_A'(0)|^{\xi(\beta_1)+\xi(\beta_2)}\Phi_A'(1)^{\beta_1+\beta_2}\\
&&\quad \times\frac{\PP[(e^{i\eps}K_1\cap\A_r)\cap (e^{i2\eps}K_2\cap\A_r)=\emptyset\cond K_1\cap A=\emptyset, K_2\cap A=\emptyset]}{\PP[(e^{i\eps}K_1\cap\A_r)\cap (e^{i2\eps}K_2\cap\A_r)=\emptyset]}\\
&=&\lim_{\eps\to 0,r\to 0}|\Phi_A'(0)|^{\xi(\beta_1)+\xi(\beta_2)}\Phi_A'(1)^{\beta_1+\beta_2}\\
&&\quad \times\frac{\PP[(\Phi_A(e^{i\eps})K_1\cap\Phi_A(\A_r\setminus A)\cap (\Phi_A(e^{i2\eps})K_2\cap\Phi_A(\A_r\setminus A))=\emptyset]}{\PP[(e^{i\eps}K_1\cap\A_r)\cap (e^{i2\eps}K_2\cap\A_r)=\emptyset]}\\
&=&|\Phi_A'(0)|^{\xi(\beta_1)+\xi(\beta_2)}\Phi_A'(1)^{\beta_1+\beta_2}\Phi_A'(1)^{\hat{\xi}(\beta_1,\beta_2)}|\Phi_A'(0)|^{\xi(\beta_1,\beta_2)-\xi(\beta_1)-\xi(\beta_2)}\\
&=&|\Phi_A'(0)|^{\xi(\beta_1,\beta_2)}\Phi_A'(1)^{\tilde{\xi}(\beta_1,\beta_2)}.
\end{eqnarray*}
\end{proof}
\newpage
\bibliographystyle{alpha}
\bibliography{hao_wu_thesis}

\begin{thebibliography}{DLLGL93}

\bibitem[Bil99]{BillingsleyConvergenceProbabilityMeasures}
Patrick Billingsley.
\newblock {\em Convergence of Probability Measures}.
\newblock 1999.

\bibitem[BL90]{BurdzyLawlerNonintersectionExponentRW1}
K~Burdzy and G~Lawler.
\newblock Non-intersection exponents for random walk and brownian motion. part
  i: Existence and an invariance principle. probab. th. and rel. fields 84
  393-410.
\newblock {\em Math. Review 91g}, 60096, 1990.

\bibitem[DK88]{DuplantierKwonConformalInvariance}
Bertrand Duplantier and Kyung-Hoon Kwon.
\newblock Conformal invariance and intersections of random walks.
\newblock {\em Phys. Rev. Lett.}, 61:2514--2517, Nov 1988.

\bibitem[DLLGL93]{DuplantierLawlerGeometryBM}
Bertrand Duplantier, Gregory~F Lawler, J-F Le~Gall, and Terence~J Lyons.
\newblock The geometry of the brownian curve.
\newblock {\em Bulletin des sciences math{\'e}matiques}, 117(1):91--106, 1993.

\bibitem[Law96a]{LawlerDimensionFrontierBM}
Gregory~F Lawler.
\newblock The dimension of the frontier of planar brownian motion.
\newblock {\em Electronic Communications in Probability}, 1:29--47, 1996.

\bibitem[Law96b]{LawlerCutBrownian}
Gregory~F. Lawler.
\newblock Hausdorff dimension of cut points for {B}rownian motion.
\newblock {\em Electron. J. Probab.}, 1:no.\ 2, approx.\ 20 pp.\ (electronic),
  1996.

\bibitem[Law05]{LawlerConformallyInvariantProcesses}
Gregory~F. Lawler.
\newblock {\em Conformally invariant processes in the plane}, volume 114 of
  {\em Mathematical Surveys and Monographs}.
\newblock American Mathematical Society, Providence, RI, 2005.

\bibitem[LP97]{LawlerDisconnectionExponentSRW}
Gregory~F Lawler and Emily~E Puckette.
\newblock The disconnection exponent for simple random walk.
\newblock {\em Israel Journal of Mathematics}, 99(1):109--121, 1997.

\bibitem[LP00]{LawlerIntersectionExponentSRW}
Gregory~F Lawler and Emily~E Puckette.
\newblock The intersection exponent for simple random walk.
\newblock {\em Combinatorics, Probability and Computing}, 9(05):441--464, 2000.

\bibitem[LSW01a]{LawlerSchrammWernerExponent1}
Gregory~F. Lawler, Oded Schramm, and Wendelin Werner.
\newblock Values of {B}rownian intersection exponents. {I}. {H}alf-plane
  exponents.
\newblock {\em Acta Math.}, 187(2):237--273, 2001.

\bibitem[LSW01b]{LawlerSchrammWernerExponent2}
Gregory~F. Lawler, Oded Schramm, and Wendelin Werner.
\newblock Values of {B}rownian intersection exponents. {II}. {P}lane exponents.
\newblock {\em Acta Math.}, 187(2):275--308, 2001.

\bibitem[LSW02a]{LawlerSchrammWernerAnalyticityIntersectionExponentsBM}
Gregory~F. Lawler, Oded Schramm, and Wendelin Werner.
\newblock Analyticity of intersection exponents for planar {B}rownian motion.
\newblock {\em Acta Math.}, 189(2):179--201, 2002.

\bibitem[LSW02b]{LawlerSchrammWernerScalinglimitSAW}
Gregory~F Lawler, Oded Schramm, and Wendelin Werner.
\newblock On the scaling limit of planar self-avoiding walk.
\newblock {\em arXiv preprint math/0204277}, 2002.

\bibitem[LSW02c]{LawlerSchrammWernerExponent3}
Gregory~F. Lawler, Oded Schramm, and Wendelin Werner.
\newblock Values of {B}rownian intersection exponents. {III}. {T}wo-sided
  exponents.
\newblock {\em Ann. Inst. H. Poincar\'e Probab. Statist.}, 38(1):109--123,
  2002.

\bibitem[LSW03]{LawlerSchrammWernerConformalRestriction}
Gregory~F. Lawler, Oded Schramm, and Wendelin Werner.
\newblock Conformal restriction: the chordal case.
\newblock {\em J. Amer. Math. Soc.}, 16(4):917--955 (electronic), 2003.

\bibitem[LW99]{LawlerWernerIntersectionExponents}
Gregory~F. Lawler and Wendelin Werner.
\newblock Intersection exponent for planar brownian motion.
\newblock {\em The Annals of Probability}, 27(4):1601--1642, 1999.

\bibitem[LW00a]{LawlerWernerUniversalityExponent}
Gregory~F. Lawler and Wendelin Werner.
\newblock Universality for conformally invariant intersection exponents.
\newblock {\em J. Eur. Math. Soc. (JEMS)}, 2(4):291--328, 2000.

\bibitem[LW00b]{LawlerWernerUniversalityConformalInvariantIntersectionExponents}
Gregory~F Lawler and Wendelin Werner.
\newblock Universality for conformally invariant intersection exponents.
\newblock {\em Journal of the European Mathematical Society}, 2(4):291--328,
  2000.

\bibitem[LW04]{LawlerWernerBrownianLoopsoup}
Gregory~F. Lawler and Wendelin Werner.
\newblock The {B}rownian loop soup.
\newblock {\em Probab. Theory Related Fields}, 128(4):565--588, 2004.

\bibitem[Man83]{MandelbrotFractalNature}
Benoit~B Mandelbrot.
\newblock {\em The fractal geometry of nature}, volume 173.
\newblock Macmillan, 1983.

\bibitem[RS05]{RohdeSchrammSLEBasicProperty}
Steffen Rohde and Oded Schramm.
\newblock Basic properties of {SLE}.
\newblock {\em Ann. of Math. (2)}, 161(2):883--924, 2005.

\bibitem[Sch00]{SchrammFirstSLE}
Oded Schramm.
\newblock Scaling limits of loop-erased random walks and uniform spanning
  trees.
\newblock {\em Israel J. Math.}, 118:221--288, 2000.

\bibitem[Smi01]{SmirnovPercolationConformalInvariance}
Stanislav Smirnov.
\newblock Critical percolation in the plane: conformal invariance, {C}ardy's
  formula, scaling limits.
\newblock {\em C. R. Acad. Sci. Paris S\'er. I Math.}, 333(3):239--244, 2001.

\bibitem[SW05]{SchrammWilsonSLECoordinatechanges}
Oded Schramm and David~B. Wilson.
\newblock S{LE} coordinate changes.
\newblock {\em New York J. Math.}, 11:659--669 (electronic), 2005.

\bibitem[SW12]{SheffieldWernerCLE}
Scott Sheffield and Wendelin Werner.
\newblock Conformal loop ensembles: the {M}arkovian characterization and the
  loop-soup construction.
\newblock {\em Ann. of Math. (2)}, 176(3):1827--1917, 2012.

\bibitem[Wer04]{WernerRandomPlanarcurves}
Wendelin Werner.
\newblock Random planar curves and {S}chramm-{L}oewner evolutions.
\newblock In {\em Lectures on probability theory and statistics}, volume 1840
  of {\em Lecture Notes in Math.}, pages 107--195. Springer, Berlin, 2004.

\bibitem[Wer05]{WernerConformalRestrictionRelated}
Wendelin Werner.
\newblock Conformal restriction and related questions.
\newblock {\em Probab. Surv.}, 2:145--190, 2005.

\bibitem[Wer07]{WernerLecturePercolation}
Wendelin Werner.
\newblock Lectures on two-dimensional critical percolation.
\newblock 2007.

\bibitem[Wer08]{WernerSelfavoidingLoop}
Wendelin Werner.
\newblock The conformally invariant measure on self-avoiding loops.
\newblock {\em J. Amer. Math. Soc.}, 21(1):137--169, 2008.

\bibitem[Wu15]{WuConformalRestrictionRadial}
Hao Wu.
\newblock Conformal restriction: the radial case.
\newblock {\em Stochastic Process. Appl.}, 125(2):552--570, 2015.

\end{thebibliography}

\end{document}